  \renewcommand{\nomgroup}[1]{%
  \item[\bfseries
  \ifthenelse{\equal{#1}{G}}{Groupoids, deformation and blowup spaces}{%
  \ifthenelse{\equal{#1}{B}}{Fiber bundles}{%
   \ifthenelse{\equal{#1}{H}}{$\mathbf{C^*}$-Algebras}{%
  \ifthenelse{\equal{#1}{I}}{$\mathbf{KK}$-elements}{% 
   \ifstrequal{#1}{O}{Other Symbols}{}}}}}%
    ]}
\numberwithin{equation}{section}
\theoremstyle{definition}
\newtheorem{definition}{Definition}[section]
\newtheorem{definitions}{Definitions}[section]
\newtheorem{notation}[definition]{Notation}
\newtheorem{remark}[definition]{Remark}
\newtheorem{remarks}[definition]{Remarks}
\newtheorem{example}[definition]{Example}
\newtheorem{proposition}[definition]{Proposition}
\newtheorem{proposition-definition}[definition]{Proposition-Definition}
\newtheorem{lemma}[definition]{Lemma}
\newtheorem{theorem}[definition]{Theorem}
\newtheorem{fact}[definition]{Fact}
\newtheorem{corollary}[definition]{Corollary}
\newcommand{\cB}{\mathcal{B}}
\newcommand{\cE}{\mathcal{E}}
\newcommand{\cF}{\mathcal{F}}
\newcommand{\cG}{\mathcal{G}}
\newcommand{\cL}{\mathcal{L}}
\newcommand{\cV}{\mathcal{V}}
\newcommand{\cD}{\mathcal{D}}
\newcommand{\cK}{\mathcal{K}}
\newcommand{\cN}{\mathcal{N}}
\newcommand{\cP}{\mathcal{P}}
\newcommand{\cS}{\mathcal{S}}
\newcommand{\X}{\mathcal{X}}
\newcommand{\R}{\mathbb{R}}
\newcommand{\bK}{\mathbb{K}}
\newcommand{\bP}{\mathbb{P}}
\newcommand{\bS}{\mathbb{S}}
\newcommand{\N}{\mathbb{N}}
\newcommand{\C}{\mathbb{C}}
\newcommand{\G}{\mathbb{G}}
\newcommand{\Z}{\mathbb{Z}}
\newcommand{\gA}{\mathfrak{A}}
\newcommand{\scE}{\mathscr{E}}
\newcommand{\scS}{\mathscr{S}}
\newcommand{\ronde}{\mathaccent'027}
\newcommand{\ev}{{\rm ev}}
\newcommand{\rM}{{\ronde M}}
\newcommand{\rG}{G_{\rM}^{\rM}} %{{\ronde G}}
\newcommand{\rN}{{\ronde N}}
\newcommand{\rcN}{{\ronde \cN}}
\newcommand{\resp}{{\it resp.}\/ }
\newcommand{\id}{{\hbox{id}}}
\newcommand{\ind}{{{\mathrm{ind}}}}
\newcommand{\wind}{\widetilde\ind}
\newcommand{\Blup}{Blup}
\newcommand{\SBlup}{SBlup}
\newcommand{\DSBlup}{\cD SBlup}
\newcommand{\DNC}{DNC}
\newcommand{\wDNC}{{\widetilde{\DNC}}}
\newcommand{\ie}{{\it i.e.}\/ }
\newcommand{\eg}{{\it e.g.}\/ }
\newcommand{\cf}{{\it cf.}\/ }
\newcommand{\EBM}{E_{BM}}
\newcommand{\Bott}{Bott}
\newcommand{\Cn}{{\sf{C}}}
\newcommand{\PT}{Poisson-trace }
\newcommand{\BMT}{Boutet de Monvel type}
\newcommand{\boundarysymb}{\Sigma_{bound}^{C^*}(G,V)}
\newcommand{\Boundarysymb}{\Sigma_{bound}^{\Psi^*}(G,V)}
\newcommand{\fish}{{\includegraphics[width=3mm]{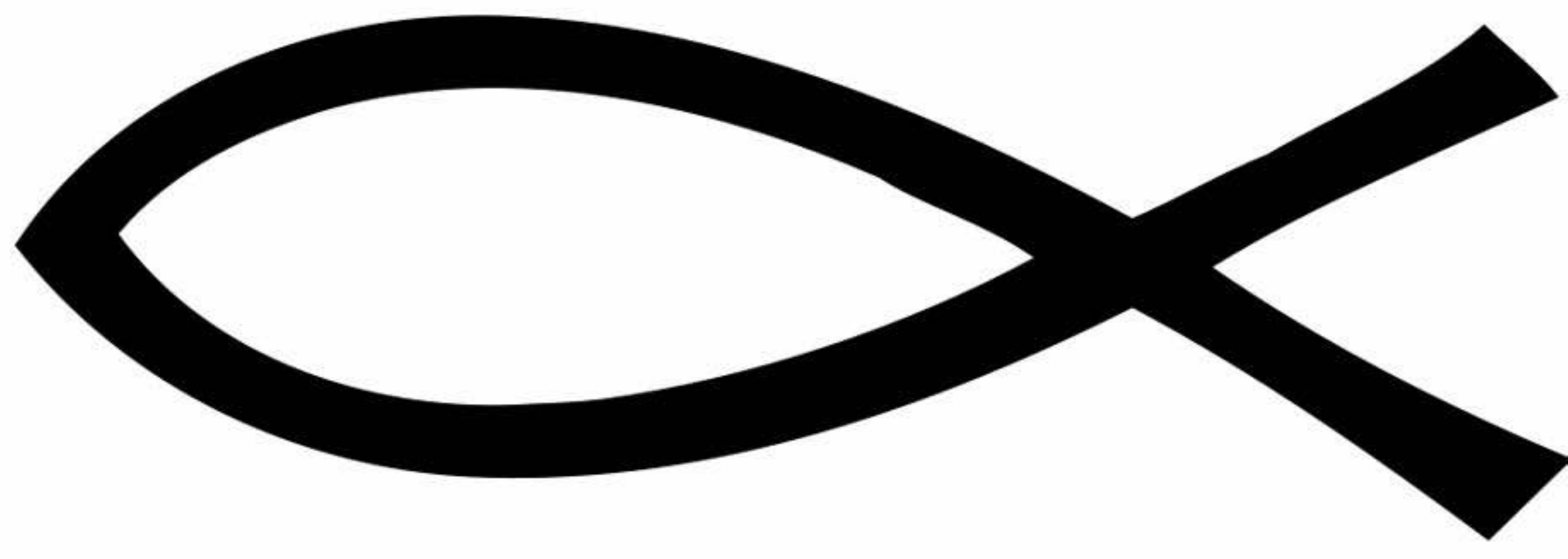}}}
\newcommand{\trace}{{\includegraphics[width=3mm]{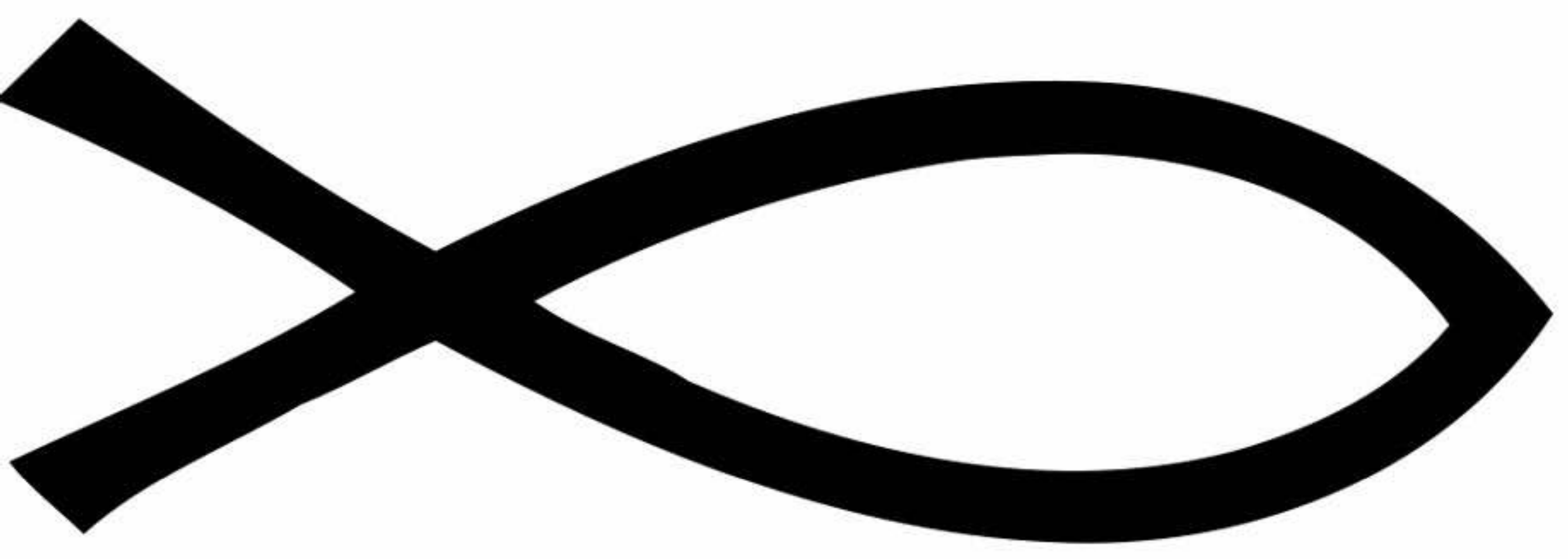}}}
\newcommand{\Green}{\rotatebox{90}{\small $\between$}} %{\green{\clubsuit}}
\newcommand{\pdo}{\psi}
\newcommand{\VBG}{$\cV\cB$ groupoid}
\newcommand{\VBGs}{$\cV\cB$ groupoids}
\newcommand{\Ept}{\scE _{PT}}
\newcommand{\Eptb}{\scE _{PT}^{V}}
\newcommand{\gag}{{_{ga}G}}
\begin{document}

\begin{center}
{\renewcommand{\thefootnote}{\*}
{\Large\bf  Blowup constructions for Lie groupoids and a Boutet de Monvel type calculus}\footnote{{ The authors were partially supported by ANR-14-CE25-0012-01 (SINGSTAR).\\ AMS subject classification: Primary 58H05,  19K56. Secondary 58B34, 22A22, 46L80,19K35, 47L80.} }}
\setcounter{footnote}{0}

\bigskip

{\sc by Claire Debord and Georges Skandalis}

\end{center}

{\footnotesize
Universit\' e Clermont Auvergne
\vskip-2pt LMBP, UMR 6620 - CNRS
\vskip-2pt Campus des C\'ezeaux, 
\vskip-2pt 3, Place Vasarely 
\vskip-2pt TSA 60026  CS 60026 
\vskip-2pt 63178 Aubi\`ere cedex, France
\vskip-2pt claire.debord@math.univ-bpclermont.fr

\vskip 2pt Universit\'e Paris Diderot, Sorbonne Paris Cit\'e
\vskip-2pt  Sorbonne Universit\'es, UPMC Paris 06, CNRS, IMJ-PRG
\vskip-2pt  UFR de Math\'ematiques, {\sc CP} {\bf 7012} - B\^atiment Sophie Germain 
\vskip-2pt  5 rue Thomas Mann, 75205 Paris CEDEX 13, France
\vskip-2pt skandalis@math.univ-paris-diderot.fr
}

\vspace{1cm}

 \begin{abstract} 
We present natural and general ways of building Lie groupoids, by using the classical procedures of blowups and of deformations to the normal cone. Our constructions are seen to recover many known ones involved in index theory.  
The deformation and blowup groupoids obtained give rise to several extensions of $C^*$-algebras and to full index problems. We compute the corresponding K-theory maps. Finally, the blowup of a manifold sitting in a transverse way in the space of objects of a Lie groupoid leads to a calculus, quite similar to the Boutet de Monvel calculus for manifolds with boundary.
 \end{abstract}

\tableofcontents

\renewcommand\theenumi{\alph{enumi}}
\renewcommand\labelenumi{\rm {\theenumi})}
\renewcommand\theenumii{\roman{enumii}}

\section{Introduction}

Let $G\rightrightarrows M$ be a Lie groupoid.  The Lie groupoid $G$ comes with its natural family of elliptic pseudodifferential operators. For example\begin{itemize}
\item  if the groupoid $G$ is just the pair groupoid $M\times M$, the associated calculus is the ordinary (pseudo)differential calculus on $M$;
\item if the groupoid $G$ is a family groupoid $M\times_B M$ associated with a fibration $p:M\to B$, the associated  (pseudo)differential operators are families of operators acting on the fibers of $p$ (those of \cite{AtSing4});
\item if the groupoid $G$ is the holonomy groupoid of a foliation, the associated  (pseudo)differential operators are longitudinal operators as defined by Connes in \cite{ConnesLNM};

\item if the groupoid $G$ is the monodromy groupoid \ie the groupoid of homotopy classes (with fixed endpoints) of paths in a (compact) manifold $M$,  the associated  (pseudo)differential operators are the $\pi_1(M)$-invariant operators on the universal cover of $M$...
\end{itemize}
The groupoid $G$ defines therefore a class of partial differential equations. 

\medskip Our study will focus here on the corresponding index problems on $M$. The index takes place naturally in the $K$-theory of the $C^*$-algebra of $G$.

 \bigskip
 Let then $V$ be a submanifold of $M$. We will consider $V$ as bringing a singularity into the problem: it forces operators of $G$ to ``slow down'' near $V$, at least in the normal directions. Inside $V$, they should only propagate along a sub-Lie-groupoid $\Gamma\rightrightarrows V$ of $G$.
 
 This behavior is very nicely encoded by a groupoid $\SBlup_{r,s}(G,\Gamma)$ obtained by using a blow-up construction of the inclusion $\Gamma \to G$. 

\bigskip

The blowup construction and the deformation to the normal cone are well known constructions in algebraic geometry as well as in differential geometry. Let $X$ be a submanifold  of a manifold $Y$. Denote by $N_X^Y$ the normal bundle.
\begin{itemize}
\item The \emph{deformation to the normal cone} of $X$ in $Y$ is a smooth manifold $\DNC(Y,X)$ obtained by naturally gluing $N_X^Y\times \{0\}$ with $Y\times \R^*$.
\item The \emph{blowup} of $X$ in $Y$ is a smooth manifold $\Blup(Y,X)$ where  $X$ is inflated to the projective space $\bP N_X^Y$. It is obtained by gluing $Y\setminus X$ with $\bP N_X^Y$ in a natural way. We will mainly consider its variant the \emph{spherical blowup} $\SBlup(Y,X)$ (which is a manifold with boundary) in which  the sphere bundle $\bS N_X^Y$ replaces the projective bundle $\bP N_X^Y$. 
\end{itemize}

\bigskip The first use of deformation groupoids in connection with index theory appears in \cite{ConnesNCG}.  A. Connes showed there that the analytic index on a compact manifold $V$ can be described using a groupoid, called the ``tangent groupoid". This groupoid is obtained as a deformation to the normal cone of the diagonal inclusion of $V$ into the pair groupoid $V\times V$. 

\medskip  Since Connes' construction, deformation groupoids were used by many authors in various contexts. 
\begin{itemize}
\item This idea of Connes was extended in  \cite{HilsSkMorph} by considering the same construction of a deformation to the normal cone for smooth immersions which are groupoid morphisms. The groupoid obtained was used in order to define the wrong way functoriality for immersions of foliations (\cite[section 3]{HilsSkMorph}). An analogous construction for submersions of foliations was also given in a remark (\cite[remark 3.19]{HilsSkMorph}).

\item In \cite{MP, NWX} Monthubert-Pierrot and Nistor-Weinstein-Xu considered the deformation to the normal cone of the inclusion $G^{(0)}\to G$ of the space of units of a smooth groupoid $G$. This generalization of Connes' tangent groupoid was  called the adiabatic groupoid of \(G\) and denoted by $G_{ad}$. It was shown that this adiabatic groupoid still encodes the analytic index associated with $G$.

\item Many other important articles use this idea of deformation groupoids. We will briefly discuss some of them in the sequel of the paper. It is certainly out of the scope of the present paper to review them all... %See also \eg \cite{Melbook, Month1, Month2, Legall, AMMS, DS1, GualtieriLi1, GualtieriLi2, NistorLast}... \textcolor{red}{A compl\'eter ou supprimer}
\end{itemize}

\bigskip Let us briefly present the objectives of our paper.

\subsection*{The groupoids $\DNC(G,\Gamma)$ and $\SBlup_{r,s}(G,\Gamma)$.} In the present paper, we give a systematic construction of deformations to the normal cone and define the blow-up deformations of groupoids. More precisely, we use the functoriality of these two constructions and note that any smooth subgroupoid $\Gamma\rightrightarrows V$ of a Lie groupoid $G\rightrightarrows M$ gives rise to a deformation to the normal cone Lie groupoid $\DNC(G,\Gamma)\rightrightarrows \DNC(M,V)$ and to a blowup  Lie groupoid $\Blup_{r,s}(G,\Gamma)\rightrightarrows \Blup(M,V)$ as well as its variant the spherical blowup  Lie groupoid $\SBlup_{r,s}(G,\Gamma)\rightrightarrows \SBlup(M,V)$.

\subsection*{Connecting maps and index maps} 

These groupoids give rise to connecting maps and to index problems that will be the main object of our study here. 

\paragraph{Connecting maps.} The (restriction $\DNC_+(G,\Gamma)$ to $\R_+$ of the) deformation groupoid $\DNC(G,\Gamma)$ is the disjoint union of an open subgroupoid $G\times \R_+^*$ and a closed subgroupoid $\cN_{\Gamma}^{G}\times \{0\}$.
 
The blowup groupoid $\SBlup_{r,s}(G,\Gamma)$ is the disjoint union of an open subgroupoid which is the restriction $G_{\rM}^{\rM}$ of $G$ to $\rM=M\setminus V$ and a boundary which is a groupoid $\cS N_{\Gamma}^G$ which is fibered over $\Gamma$, \ie a \VBG\ (in the sense of Pradines \cf \cite{Pra,Mack}).

This decomposition gives rise to exact sequences of $C^*$-algebras that we wish to ``compute'': 
$$0\longrightarrow C^*(G_{\rM}^{\rM})\longrightarrow C^*(\SBlup_{r,s}(G,\Gamma))\longrightarrow C^*(\cS N_\Gamma^G)\longrightarrow 0\eqno {E^{\partial}_{\SBlup}}$$
and
$$0\longrightarrow C^*(G\times \R_+^*)\longrightarrow C^*(\DNC_+(G,\Gamma))\longrightarrow C^*(\cN_{\Gamma}^{G})\longrightarrow 0\eqno {E^{\partial}_{\DNC_+}}$$

\paragraph{Full index maps.} Denote by $\Psi^*(\DNC_+(G,\Gamma))$ and $\Psi^*(\SBlup_{r,s}(G,\Gamma))$ the $C^*$-algebra of order $0$ pseudodifferential operators on the Lie groupoids $\DNC_+(G,\Gamma)$ and $\SBlup_{r,s}(G,\Gamma)$ respectively.  The above decomposition of groupoids give rise to extensions  of groupoid $C^*$-algebras of pseudodifferential type 
$$0\longrightarrow C^*(G_{\rM}^{\rM})\longrightarrow \Psi^*(\SBlup_{r,s}(G,\Gamma))\overset{\sigma_{full}}{{\relbar\joinrel\relbar\joinrel\longrightarrow}} \Sigma_{\SBlup}(G,\Gamma)\longrightarrow 0\eqno {E^{\wind}_{\SBlup}}$$
and
$$0\longrightarrow C^*(G\times \R_+^*)\longrightarrow \Psi^*(\DNC_+(G,\Gamma))\overset{\sigma_{full}}{\relbar\joinrel\relbar\joinrel\longrightarrow} \Sigma_{\DNC_+}(G,\Gamma)\longrightarrow 0\eqno {E^{\wind}_{\DNC_+}}$$
where $\Sigma_{\DNC_+}(G,\Gamma)$ and $\Sigma_{\SBlup}(G,\Gamma)$ are  called the \emph{full symbol algebra}, and the morphisms $\sigma_{full}$ the \emph{full symbol} maps.

\paragraph{The full symbol maps.} The full symbol algebras are naturally fibered products: $$\Sigma_{\SBlup}(G,\Gamma)=C(\bS^*\gA \SBlup_{r,s}(G,\Gamma))\times_{C(\bS^*\gA\cS N_\Gamma^G)}\Psi^*(\cS N_\Gamma^G)$$
and 
$$\Sigma_{\DNC_+}(G,\Gamma)=C(\bS^*\gA \DNC_+(G,\Gamma))\times_{C(\bS^*\gA \cN_\Gamma^G)}\Psi^*(\cN_\Gamma^G).$$

Thus, the full symbol maps have two components:\begin{itemize}
\item The usual commutative symbol of the groupoid. They are morphisms: $$\Psi^*(\DNC_+(G,\Gamma))\to C(\bS^*\gA \DNC_+(G,\Gamma))\ \ \hbox{and}\ \ \Psi^*(\SBlup_{r,s}(G,\Gamma))\to C(\bS^*\gA \SBlup_{r,s}(G,\Gamma)).$$ The commutative symbol takes its values in the algebra of continuous fonctions on the sphere bundle of the algebroid of the Lie groupoids (with boundary) $\DNC_+(G,\Gamma))$ and $\SBlup_{r,s}(G,\Gamma)$.
\item The restriction to the boundary: $$\sigma_\partial:\Psi^*(\SBlup_{r,s}(G,\Gamma))\to \Psi^*(\cS N_\Gamma^G) \mbox{ and } \Psi^*(\DNC_+(G,\Gamma))\to \Psi^*(\cN_\Gamma^G)\ . $$
\end{itemize}

\paragraph{Associated $KK$-elements.} Assume that the groupoid $\Gamma$ is amenable. Then the groupoids $\cN_{\Gamma}^G$ and $\cS N_{\Gamma}^{G}$ are also amenable, and exact sequences $E^{\partial}_{\SBlup}$ and $E^{\partial}_{\DNC_+}$ give rise to connecting elements $\partial_{\SBlup}^{G,\Gamma}\in KK^1(C^*(\cS N_{\Gamma}^G),C^*(G_{\rM}^{\rM}))$ and  $\partial_{\DNC_+}^{G,\Gamma}\in KK^1(C^*(\cN_{\Gamma}^{G}),C^*(G\times \R_+^*))$ (\cf \cite{Kasparov1980}). Also, the full symbol $C^*$-algebras $ \Sigma_{\SBlup}(G,\Gamma)$ and $\Sigma_{\DNC_+}(G,\Gamma)$ are nuclear and we also get $KK$-elements $\wind_{\SBlup}^{G,\Gamma}\in KK^1(\Sigma_{\SBlup}(G,\Gamma),C^*(G_{\rM}^{\rM}))$ and $\wind_{\DNC_+}^{G,\Gamma}\in KK^1(\Sigma_{\DNC_+}(G,\Gamma),C^*(G\times \R_+^*))$.

If $\Gamma$ is not amenable, these constructions can be carried over in $E$-theory (of maximal groupoid $C^*$-algebras).

\paragraph{Connes-Thom elements}
We will establish the following facts.\begin{enumerate}
\item  There is a natural Connes-Thom element $\beta \in KK^1(C^*(\SBlup_{r,s}(G,\Gamma)),C^*(\DNC_+(G,\Gamma)))$. This element restricts to very natural elements $\beta'\in KK^1(C^*(G_{\rM}^{\rM}),C^*(G\times \R_+^*))$ and $\beta''\in KK^1(C^*(\cS N_{\Gamma}^G),C^*(\cN_{\Gamma}^{G}))$. 

These elements extend to elements  $\beta_\Psi \in KK^1(\Psi^*(\SBlup_{r,s}(G,\Gamma)),\Psi^*(\DNC_+(G,\Gamma)))$ and $\beta_\Sigma \in KK^1(\Sigma_{\SBlup}(G,\Gamma)),\Sigma_{\DNC_+}(G,\Gamma)))$.

We have $\partial_{\SBlup}^{G,\Gamma}\otimes \beta'=-\beta''\otimes \partial_{\DNC_+}^{G,\Gamma}$ (\cf facts \ref{fact1} and \ref{fact2}) and $\wind_{\SBlup}^{G,\Gamma}\otimes \beta'=-\beta_\Sigma \otimes \wind_{\DNC_+}^{G,\Gamma}$ (fact \ref{fact7}).

\item If $M\setminus V$ meets all the orbits of $G$, then $\beta'$ is $KK$-invertible. Therefore, in that case, $\partial_{\DNC_+}^{G,\Gamma}$ determines  $\partial_{\SBlup}^{G,\Gamma}$ and $\wind_{\DNC_+}^{G,\Gamma}$ determines  $\wind_{\SBlup}^{G,\Gamma}$.

\item We will say that $V$ is \emph{$\gA G$-small} if the transverse action of $G$ on $V$ is nowhere $0$, \ie if for every $x\in V$, the image by the anchor of the algebroid $\gA G$ of $G$ is not contained in $T_xV$ (\cf definition \ref{notation6}). In that case, $\beta'$, $\beta''$, $\beta_\Sigma$ are $KK$-invertible: the connecting elements $\partial_{\DNC_+}^{G,\Gamma}$ and  $\partial_{\SBlup}^{G,\Gamma}$ determine each other, and the full index maps $\wind_{\DNC_+}^{G,\Gamma}$ and $\wind_{\SBlup}^{G,\Gamma}$  determine each other
\end{enumerate}

\paragraph{Computation.}
If $\Gamma=V$, then $C^*(\cN_{V}^{G})$ is $KK$-equivalent to $C_0(N_V^G)$ using a Connes-Thom isomorphism and the element  $\partial^{G,\Gamma}_{\DNC_+}$  is the Kasparov product of the inclusion of $N_V^G$ in the algebroid $\gA G=N_M^G$ of $G$ (using a tubular neighborhood) and the index element $\ind_G\in KK(C_0(\gA^* G),C^*(G))$ of the groupoid $G$ (prop. \ref{ComputeDNC}.\ref{noiseuse}). Of course, if $V$ is $\gA G$-small, we obtain the analogous result for $\partial_{\SBlup}^{G,\Gamma}$.

The computation of the corresponding full index is also obtained in the same way in prop.  \ref{ComputeDNC}.\ref{ComputeDNCIndex}).

\paragraph{Full index and relative $K$-theory.}
Assume that $\Gamma$ is just a $\gA G$-small submanifold $V$ of $M$. We will actually obtain a finer construction by using relative $K$-theory. It is a general fact that relative $K$-theory gives more precise index theorems than connecting maps (\cf \eg \cite{AtSing1, SchickCIRM, MSS1, MSS2, AndrSk3}). In particular, the relative $K$-theory point of view allows to take into account symbols from a vector bundle to another one. 

Let $\psi:C_0(\SBlup(M,V))\to \Psi^*(\SBlup_{r,s}(G,\Gamma))$ be the natural inclusion and consider the morphism $\mu_{\SBlup}=\sigma_{full}\circ \psi:C_0(\SBlup(M,V))\to \Sigma_{\SBlup}(G,V)$. The relative index theorem computes the map $\ind _{rel}:K_*(\mu_{\SBlup}\circ \psi )\to K_*(C^*(\rG))$:\begin{itemize}
\item  the relative $K$-group $K_*(\mu_{\SBlup})$ is canonically isomorphic to $K_{*}(C_0(\gA^*\rG))$; 
\item under this isomorphism $\ind_{rel}$ identifies with the index map of the groupoid $\rG$.
\end{itemize}
We prove an analogous result for the morphism $\mu_{\DNC}:C_0(\DNC_+(M,V)\to \Sigma_{\DNC_+}(G,V)$

\medskip
In fact, most of the computations involved come from a quite more general situation studied in section \ref{ne}. There we consider a groupoid $G$ and a partition of $G^{(0)}$ into an open and a closed saturated subset and study the connecting elements of the associated exact sequences.

\subsection*{A Boutet de Monvel type calculus.}
Let $H$ be a Lie groupoid. In \cite{DS1}, extending ideas of Aastrup, Melo, Monthubert and Schrohe \cite{AMMS}, we studied the \emph{gauge adiabatic groupoid} $H_{ga}$: the crossed product of the adiabatic groupoid of $H$ by the natural action of $\R_+^*$. We constructed a bimodule $\scE_H$ giving a Morita equivalence between the algebra of order $0$ pseudodifferential operators on $H$ and a natural ideal in the convolution $C^*$-algebra $C^*(H_{ga})$ of this gauge adiabatic groupoid. 

The gauge adiabatic groupoid $H_{ga}$ is in fact  a blowup groupoid, namely $\SBlup_{r,s}(H\times (\R\times \R),H^{(0)})$ (restricted to the clopen subset $H^{(0)}\times \R_+$ of $\SBlup(H^{(0)}\times \R,H^{(0)})=H^{(0)}\times (\R_-\sqcup \R_+)$). 

\medskip
Let now $G\rightrightarrows M$ be a Lie groupoid and let $V$ be a submanifold $M$ which is \emph{transverse} to the action of $G$ (see def. \ref{E2Kdix}). We construct a \PT bimodule: it is a $C^*(\SBlup_{r,s}(G,V)),\Psi^*(G_V^V)$ bimodule  $\Ept (G,V)$, which is a full $\Psi^*(G_V^V)$ Hilbert module.
When $G$ is the direct product of $G_V^V$ with the pair groupoid $\R\times \R$ the \PT bimodule coincide with $\scE_{G_V^V}$ constructed in \cite{DS1}. In the general case, thanks to a convenient (spherical) blowup construction, we construct a linking space between the groupoids $\SBlup_{r,s}(G,V)$ and $(G_V^V)_{ga}=\SBlup_{r,s}(G_V^V\times (\R\times \R),V)$. This linking space defines a $C^*(\SBlup_{r,s}(G,V)),C^*((G_V^V)_{ga})$-bimodule $\scE(G,V)$ which is a Morita equivalence of groupoids when $V$ meets all the orbits of $G$. The \PT-bimodule is then the composition of $\scE_{G_V^V}$ with $\scE(G,V)$.

Denote by $\Psi^*_{BM}(G;V)$ the \emph{Boutet de Monvel type} algebra consisting of matrices $R=\begin{pmatrix}
 \Phi&P\\
 T&Q
\end{pmatrix}$ with $\Phi\in \Psi^*(\SBlup_{r,s}(G,V)),\ P\in \Ept(G,V),\ T\in \Ept^*(G,V)$ and $Q\in \Psi^*(G_V^V)$, and $C^*_{BM}(G;V)$ its ideal - where $\Phi\in C^*(\SBlup_{r,s}(G,V))$.  This algebra has obvious similarities with the one involved in the Boutet de Monvel calculus for manifold with boundary \cite{MB1}. We will examine its relationship with these two algebras in a forthcoming paper.

We still have two natural symbol maps: the classical symbol $\sigma_c:\Psi^*_{BM}(G,V)\to C(\bS^*\gA G)$ given by $\sigma_c\begin{pmatrix}
 \Phi&P\\
 T&Q
\end{pmatrix}=\sigma_c(\Phi)$ and the boundary symbol $r_V$ which is restriction to the boundary.

We have an exact sequence:
$$0\to C^*(G_{\rM\coprod V}^{\rM\sqcup V})\to \Psi^*_{BM}(G;V)\overset{\sigma_{BM}}{\relbar\joinrel\relbar\joinrel\longrightarrow} \Sigma_{BM}(G,V) \to 0$$
where $\Sigma_{BM}(G,V)=\Psi^*_{BM}(G;V)/C^*(G_{\rM\sqcup V}^{\rM\sqcup V})$ and  $\sigma_{BM}$ is defined using both $\sigma_c$ and $r_V$.

We may note that $\Psi^*(\SBlup_{r,s}(G,V))$  identifies with the full hereditary subalgebra of $\Psi^*_{BM}(G,V)$ consisting of elements of the form $ \begin{pmatrix}
 \Phi&0\\
 0&0
\end{pmatrix} $. We thus obtain Boutet de Monvel type index theorems for the connecting map of this exact sequence  - as well as for the corresponding relative $K$-theory.

\bigskip
The paper is organized as follows:
\begin{itemize}
\item In section 2 we recall some classical facts, constructions and notation involving groupoids.
\item Section 3 is devoted to the description and computation of various $KK$-elements associated with groupoid $C^*$-algebras. The first and second type are encountered in the situation where a Lie groupoid $G$ can be cut in two pieces $G=G\vert_W\sqcup G\vert_ F$ where $W$ is an open saturated subset of the units $G^{(0)}$ and $F=G^{(0)}\setminus W$. They are respectively built from exact sequences of $C^*$-agebras of the form:
$$0\longrightarrow C^*(G_{W})\longrightarrow C^*(G)\longrightarrow C^*(G_F)\longrightarrow 0\eqno E_\partial$$ and $$
0\longrightarrow C^*(G_{W})\longrightarrow  \Psi^*(G)\longrightarrow \Sigma^W(G)\longrightarrow  0\eqno E_{\widetilde {\ind}_{full}}$$
The other $KK$-elements are Connes-Thom type elements arising when a $\R$-action is involved in those situations.
\item In section 4 we review two geometric constructions: deformation to the normal cone and blowup, and their functorial properties. 
\item In section 5, using this functoriality, we study deformation to the normal cone and blowup in the Lie groupoid context. We present examples which recover groupoids constructed previously by several authors.
\item In section 6, applying the results obtained in section 3, we compute the connecting maps and index maps of the groupoids constructed in section 5.
\item In section 7, we describe the above mentioned Boutet de Monvel type calculus.
\item Finally, in the appendix, we make a few remarks on \VBGs. In particular, we give a presentation of the dual \VBG\ $E^*$ of a \VBG\ $E$ and show that $C^*(E)$ and $C^*(E^*)$ are isomorphic. 
\item Our constructions involved a large amount of notation, that we tried to choose as coherent as possible. We found it however helpful to list several items in an index at the end of the paper.
\end{itemize} 

\medskip \begin{notation}\label{notation1.3}
We will use the following notation:

\begin{itemize}
\item If $E$ is a real vector bundle over a manifold (or over a locally compact space) $M$, the corresponding projective bundle \(\bP (E) \) is the bundle over \(M\) whose fiber over a point \(x\) of \(M\) is the projective space \(\bP(E_x)\). The bundle  \(\bP (E) \) is simply the quotient of \(E\setminus M\) by the natural action of \(\R^* \) by dilation. The quotient of  \(E\setminus M\) under the  action of \(\R_+^* \) by dilation is the (total space of the) sphere bundle \(\bS (E) \).

\item If $E$ is a real vector bundle over a manifold (or a locally compact space) $M$, we will denote by $\ronde B^*E$, $B^*E$ and $\bS^*E$ the total spaces of the fiber bundles of open balls, closed ball and spheres of the dual vector bundle $E^*$ of $E$. If $F\subset M$ is a closed subset of $M$, we will denote by $B^*_FE$ the quotient of $B^*E$ where we identify two points $(x,\xi)$ and $(x,\eta)$ for $x\in F$ and $\bS^*_FE$ the image of $\bS^*E$ in $B^*_FE$. 
\end{itemize}
\end{notation}

\medskip \textbf{Acknowledgements.} We would like to thank Vito Zenobi for his careful reading and for pointing out quite a few typos in an earlier version of the manuscript.

%%%%% Pour l'index
\nomenclature[B, 02]{$\bP (E)$, $\bS (E)$ }{The projective and sphere bundles associated to a real vector bundle $E$ over $M$, whose fiber over $x\in M$ are respectively the projective space $\bP(E_x)$ and the sphere $\bS(E_x)$}
%\nomenclature[B, 03]{$\bS (E)$}{The sphere bundle associated to a real vector bundle $E$ over $M$, whose fiber over $x\in M$ is the sphere $\bS(E_x)$}
\nomenclature[B, 04]{$\ronde B^*E$, $B^*E$, $\bS^*E$}{The total spaces of the fiber bundles of open balls, closed ball and spheres of the dual vector bundle $E^*$ of $E$}
\nomenclature[B, 05]{$B^*_FE$}{The quotient of $B^*E$ where we identify two points $(x,\xi)$ and $(x,\eta)$ for $x\in F$, $F$ being a closed subset of the zero section $M$ of the bundle $E$}
\nomenclature[B, 06]{$\bS^*_FE$}{The image of $\bS^*E$ in $B^*_FE$}
%%%%%

\section{Some quite classical constructions involving groupoids}

%\subsubsection*{Some classical notation}
\subsection{Some classical notation}

Let $G$ be a Lie groupoid. We denote by $G^{(0)}$ its space of objects and $r,s:G\to G^{(0)}$ the range and source maps. 

%%%%% Pour l'index
\nomenclature[G, 01]{$G\overset{r,s}{\rightrightarrows} G^{(0)}$}{A Lie groupoid with source  $s$, range $r$ and space of units $G^{(0)}$}
%%%%%

The algebroid of $G$ is denoted by $\gA G$, and its anchor by $\natural:\gA G\to TG^{(0)}$ its anchor. Recall that (the total space of) $\gA G$ is the normal bundle  $N_{G^{(0)}}^G$ and the anchor map is induced by \((dr-ds)\).

%%%%% Pour l'index
\nomenclature[G, 02]{ $\gA G$}{The Lie algebroid of the groupoid $G$}

We denote by $\gA^*G$ the dual bundle of $\gA G$ and by $\bS^* \gA G$ the sphere bundle of $\gA^*G$.

\begin{itemize}
\item We denote by $C^*(G)$  its (full or reduced) $C^*$-algebra. We denote by $\Psi^*(G)$ the $C^*$-algebra of order $\le0$ (classical, \ie polyhomogeneous) pseudodifferential operators on $G$ vanishing at infinity on $G^{(0)}$ (if $G^{(0)}$ is not compact). More precisely, it is the norm closure in the multiplier algebra of $C^*(G)$ of the algebra of classical pseudodifferential operators on $G$ with compact support in $G$.

%%%%% Pour l'index
\nomenclature[H, 01]{$C^*(G)$}{The (either maximal or reduced) $C^*$-algebra of the groupoid $G$}
\nomenclature[H, 02]{$\Psi^*(G)$}{The $C^*$-algebra of order $\le0$ pseudodifferential operators on $G$ vanishing at infinity on $G^{(0)}$}

We have an exact sequence of $C^*$-algebras $0\to C^*(G)\to \Psi^*(G)\to C_0(\bS^* \gA G)\to 0$.

As mentioned in the introduction, our constructions involve connecting maps associated to short exact sequences of groupoid $C^*$-algebras, therefore they make sens a priori for the full $C^*$-algebras, and give rise to $E$-theory elements (\cite{Connes-Higson}). Nevertheless, in many interesting situations, the quotient $C^*$-algebra will be the $C^*$-algebra of an amenable groupoid, thus the corresponding exact sequence is semi-split as well as for the reduced and the full $C^*$-algebras and it defines moreover a $KK$-element. In these situations $C^*(G)$ may either be the reduced or the full $C^*$-algebra of the groupoid $G$ and we have preferred to leave the choice to the reader.

\item For any maps $f:A\to G^{(0)}$ and $g:B\to G^{(0)}$, define $$G^f=\{(a,x)\in A\times G;\ r(x)=f(a)\} , \ G_g=\{(x,b)\in G \times B;\ s(x)=g(b)\}$$ and $$G^f_g=\{(a,x,b)\in A\times G\times B;\ r(x)=f(a),\ s(x)=g(b)\}\ .$$  In particular for $A,B\subset G^{(0)}$, we put $G^A=\{x\in G;\ r(x)\in A\}$ and $G_A=\{x\in G;\ s(x)\in A\}$; we also put $G_A^B=G_A\cap G^B$.

Notice that \(A\) is a \emph{saturated} subset of $G^{(0)}$ if and only if $G_A=G^A=G_A^A$.

%%%%% Pour l'index
\nomenclature[G, 05]{$G^f$, $G_g$, $G^f_g$ }{If $f:A\to G^{(0)}$ and $g:B\to G^{(0)}$ are maps, $G^f=\{(a,x)\in A\times G;\ r(x)=f(a)\}$, $G_g=\{(x,b)\in G \times B;\ s(x)=g(b)\}$ and $G^f_g=G^f\cap G_g$}
\nomenclature[G, 03]{$G^A$, $G_B$, $G^A_B$}{If $A$ and $B$ are subsets of $G^{(0)}$, $G^A=\{x\in G;\ r(x)\in A\}$,  $G_B=\{x\in G;\ s(x)\in B\}$ and $G_A^B=G_A\cap G^B$ }
%\nomenclature[G, 04]{$G\vert_A$}{The restriction $G_A^A$ of the groupoid $G$ to a saturated subset $A\subset G^{(0)}$}
%%%%

\item We denote by $G_{ad}$ the adiabatic groupoid of $G$, (\cite{MP,NWX}), it is obtained by using the deformation to the normal cone construction for the inclusion of $G^{(0)}$ as a Lie subgroupoid of $G$ (see section \ref{red} and \ref{sefaire} below for a complete description). Thus: 
\[G_{ad}=G\times \R^* \cup \gA G\times \{0\} \rightrightarrows G^{(0)}\times \R \ .\]  
If $X$ is a locally closed saturated subset of $M\times \R$, we will denote sometimes by $G_{ad}(X)$ the restriction $(G_{ad})_X^X$ of $G_{ad}$ to $X$: it is a locally compact groupoid.

In the sequel of the paper, we let  $G_{ad}^{[0,1]}=G_{ad}(G^{(0)}\times[0,1])$ and  $G_{ad}^{[0,1)}=G_{ad}(G^{(0)}\times[0,1))$ \ie \[G_{ad}^{[0,1]}=G\times (0,1] \cup \gA G\times \{0\} \rightrightarrows G^{(0)}\times[0,1] \ \ \ \hbox{and}\ \ \ G_{ad}^{[0,1)}=G\times (0,1) \cup \gA G\times \{0\} \rightrightarrows G^{(0)}\times[0,1).\] 

\end{itemize}

%%%%% Pour l'index
\nomenclature[B]{$N_V^M$}{The normal bundle of a submanifold $V$ of a manifold $M$}

\nomenclature[G, 07]{ $G_{ad}$, $G_{ad}^{[0,1]}$, $G_{ad}^{[0,1)}$}{The adiabatic groupoid of $G$ and its restriction respectively to $G^{(0)}\times[0,1] $ and to $G^{(0)}\times[0,1)$ }
\nomenclature[G, 08]{$G_{ad}(X)$}{The restriction of $G_{ad}$ to a locally closed saturated subset $X$ of $G^{(0)}\times [0,1]$}
%%%%

\smallskip
\begin{remark}\label{Mnotmanifold}
Many manifolds and groupoids that occur in our constructions have boundaries or corners. In fact all the groupoids we consider sit naturally inside Lie groupoids \emph{without boundaries} as restrictions to closed \emph{saturated} subsets. This means that we consider subgroupoids $G_V^V=G_V$ of a Lie groupoid $G\overset{r,s}{\rightrightarrows} G^{(0)}$ where $V$ is a closed subset of $G^{(0)}$. Such groupoids, have a natural algebroid, adiabatic deformation, pseudodifferential calculus, \emph{etc.} that are restrictions to $V$ and $G_V$ of the corresponding objects on $G^{(0)}$ and $G$. We chose to give our definitions and constructions for Lie groupoids for the clarity of the exposition. The case of a longitudinally smooth groupoid over a manifold with corners is a straightforward generalization using a convenient restriction.
\end{remark}

%\subsubsection*{Transversality}
\subsection{Transversality  and Morita equivalence}

Let us recall the following definition (see \eg \cite{Zenobi} for details):

 \begin{definition}\label{E2Kdix}
Let $G\overset{r,s}{\rightrightarrows} M$ be a Lie groupoid with set of objects $G^{(0)}=M$. Let $V$ be a manifold. A smooth map $f:V\to M$ is said to be  \emph{transverse} to (the action of the groupoid) $G$  if for every $x\in V$, $df_x(T_xV)+\natural _{f(x)}\gA _{f(x)}G=T_{f(x)}M$.
\\
An equivalent condition is that the map \((\gamma,y) \mapsto r(\gamma)\)  defined on the fibered product \(G_f=G\underset{s,f}{\times} V\) is a submersion $G_f\to M$.
\\
A submanifold $V$ of $M$ is \emph{transverse} to  $G$ if the inclusion $V\to M$ is transverse to $G$ - equivalently, if for every $x\in V$, the composition $q_x=p_x\circ \natural_x:\gA_x G\to (N_V^M)_x=T_xM/T_xV$ is onto.
\end{definition}

\begin{remark}\label{soeur}
Let $V$ be a (locally) closed submanifold of $M$ transverse to a groupoid $G\overset{r,s}{\rightrightarrows} M$. Denote by $N_V^M$ the (total space) of the normal bundle of $V$ in $M$. Upon arguing locally, we can assume that $V$ is compact. 

By the transversality assumption the anchor $\natural:\gA G_{|V}\to TM_{|V}$ induces a surjective bundle morphism $\gA G_{|V}\to N_V^M$. Choosing a subbundle $W'$ of the restriction $\gA G_{|V}$ such that $W'\to N_V^M$ is an isomorphism and using an exponential map, we thus obtain a submanifold $W\subset G$  such that $r:W\to M$ is a diffeomorphism onto an open neighborhood of $V$ in $M$ and $s$ is a submersion from $W$ onto $V$.   Replacing $W$ by a an open subspace, we may assume that $r(W)$ is a tubular neighborhood of $V$ in $M$, diffeomorphic to $N_V^M$. The map $W\times _{V}G_V^V\times _VW\to G$ defined by $(\gamma_1,\gamma_2,\gamma_3)\mapsto \gamma_1\circ\gamma_2\circ\gamma_3^{-1}$ is a diffeomorphism and a groupoid isomorphism from the pull back groupoid (see next section) $(G_V^V)_s^s=W\times _{V}G_V^V\times _VW$ onto the open subgroupoid $G_{r(W)}^{r(W)}$ of $G$.

\end{remark}

\subsubsection*{Pull back}

If $f:V\to M$ is transverse to a Lie groupoid $G\overset{r,s}{\rightrightarrows} M$, then the \emph{pull back groupoid} $G_f^f$ is naturally a Lie groupoid (a submanifold of $V\times G\times V$).

\smallskip
If $f_i:V_i\to M$ are transverse to $G$ (for $i=1,2$) then we obtain a Lie groupoid $G_{f_1\sqcup f_2}^{f_1\sqcup f_2}\rightrightarrows V_1\sqcup V_2$. The \emph{linking manifold} $G_{f_2}^{f_1}$ is a clopen submanifold. We denote by $C^*(G_{f_2}^{f_1})$ the closure in $C^*(G_{f_1\sqcup f_2}^{f_1\sqcup f_2})$ of  the space of functions (half densities) with support in $G_{f_2}^{f_1}$; it is a $C^*(G_{f_1}^{f_1})-C^*(G_{f_2}^{f_2})$ bimodule. 

\begin{fact}\label{otum}
The bimodule $C^*(G_{f_2}^{f_1})$ is full if all the $G$ orbits meeting $f_2(V_2)$ meet also $f_1(V_1)$.
\end{fact}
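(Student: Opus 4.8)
The plan is to reinterpret everything inside the pull-back groupoid and then reduce to a standard fact about reductions of groupoids to open subsets of their units. Write $H:=G_{f_1\sqcup f_2}^{f_1\sqcup f_2}\rightrightarrows V_1\sqcup V_2$, and let $\chi_i:=\chi_{V_i}\in C_b(H^{(0)})\subseteq M(C^*(H))$ be the complementary projections coming from the clopen splitting of $H^{(0)}$. Under the identifications of the preceding paragraph, $C^*(G_{f_1}^{f_1})=\chi_1C^*(H)\chi_1$, $C^*(G_{f_2}^{f_2})=\chi_2C^*(H)\chi_2$, and the bimodule in the statement is $\cE:=\chi_1C^*(H)\chi_2$ with $C^*(G_{f_2}^{f_2})$-valued inner product $\langle\xi,\eta\rangle=\xi^*\eta$, so that $\overline{\langle\cE,\cE\rangle}=\overline{\chi_2C^*(H)\chi_1C^*(H)\chi_2}$. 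A short computation then shows that $\cE$ is full over $C^*(G_{f_2}^{f_2})$ as soon as $\chi_2$ lies in the closed two-sided ideal $J$ of $C^*(H)$ generated by $\chi_1$ (equivalently, generated by the clopen corner $C^*(G_{f_1}^{f_1})$): if $\chi_2\in J$ then $\chi_2C^*(H)\chi_2\subseteq\chi_2J\chi_2\subseteq\overline{\chi_2C^*(H)\chi_1C^*(H)\chi_2}=\overline{\langle\cE,\cE\rangle}$, while the reverse inclusion is trivial.

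Next I would translate the hypothesis. For two units $v,v'\in V_1\sqcup V_2$ one has $v\sim_H v'$ if and only if $(f_1\sqcup f_2)(v)$ and $(f_1\sqcup f_2)(v')$ lie in the same $G$-orbit, simply because by construction of the pull-back every $\gamma\in G$ with $r(\gamma)=(f_1\sqcup f_2)(v)$ and $s(\gamma)=(f_1\sqcup f_2)(v')$ defines an arrow of $H$ between $v'$ and $v$. Hence the assumption that every $G$-orbit meeting $f_2(V_2)$ meets $f_1(V_1)$ says exactly that the clopen set $V_1\subseteq H^{(0)}$ meets every orbit of $H$. Now I would invoke the standard Morita equivalence attached to the reduction of a Lie groupoid to an open subset of its unit space meeting all orbits: applied to $V_1$ it says precisely that the corner $\chi_1C^*(H)\chi_1$ is full, i.e. $\overline{C^*(H)\chi_1C^*(H)}=C^*(H)$, so $J=C^*(H)\ni\chi_2$, and the first paragraph concludes that $C^*(G_{f_2}^{f_1})$ is a full right Hilbert $C^*(G_{f_2}^{f_2})$-module.

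The one ingredient that is not bookkeeping is this standard fact — that reducing a Lie groupoid to a clopen (or open) subset of its units meeting every orbit produces a full corner — which is exactly where the orbit hypothesis is spent; this is the step I expect to be the main obstacle to making the argument self-contained. Its proof is the usual local argument: given $g_0\in H_{V_2}^{V_2}$ one uses that the $H$-orbit of $s(g_0)$ meets $V_1$ to choose an arrow $h_0\in H_{V_2}^{V_1}=G_{f_2}^{f_1}$ with $s(h_0)=s(g_0)$; then for $\xi$ supported near $h_0g_0^{-1}$ and $\eta$ near $h_0$ (both lying in $G_{f_2}^{f_1}$) the convolution $\xi^*\eta$ is supported near $g_0$, and by the local surjectivity of the groupoid convolution together with a partition of unity on $H_{V_2}^{V_2}$ such products span an inductive-limit dense subspace of $C_c(H_{V_2}^{V_2})$. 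All of this presupposes, via the transversality of $f_1$ and $f_2$, that $H$ is a Lie groupoid, so that these $C_c$-level manipulations make sense; and since only a union of $H$-orbits is involved, every step is equally valid for the full and for the reduced groupoid $C^*$-algebras, which is why the ambiguity in the notation $C^*(\,\cdot\,)$ is harmless here.
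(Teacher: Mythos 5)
Your argument is correct. Note that the paper itself states Fact \ref{otum} without proof (it is offered as a standard consequence of the pull-back construction), so there is no ``paper proof'' to compare against; what you give is precisely the expected Muhly--Renault--Williams-type argument: translate the orbit hypothesis into the statement that every orbit of $H=G_{f_1\sqcup f_2}^{f_1\sqcup f_2}$ meeting $V_2$ meets $V_1$, and then produce, for each $g_0\in H_{V_2}^{V_2}$, elements $\xi,\eta\in C_c(H_{V_2}^{V_1})$ whose products $\xi^*\eta$ span an inductive-limit dense subspace of $C_c$ near $g_0$. Two cosmetic points: the detour through the ideal $J$ generated by $\chi_1$ is redundant, since your third paragraph already proves $\overline{\langle\cE,\cE\rangle}\supseteq C_c(H_{V_2}^{V_2})$ directly (and only needs the weaker hypothesis that orbits meeting $V_2$ meet $V_1$, not that $V_1$ meets \emph{all} orbits); and the phrase ``$\chi_2\in J$'' should be read as ``$\chi_2 C^*(H)\chi_2\subseteq J$'', since $\chi_2$ is only a multiplier when $V_2$ is noncompact --- but that is exactly the inclusion your displayed chain actually uses, so nothing breaks.
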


\subsubsection*{Morita equivalence}\label{Moreq1}

Two Lie groupoids $G_1\overset{r,s}{\rightrightarrows} M_1$ and $G_2\overset{r,s}{\rightrightarrows} M_2$ are \emph{Morita equivalent} if there exists a groupoid $G\overset{r,s}{\rightrightarrows} M$ and smooth maps $f_i:M_i\to M$ transverse to $G$ such that the pull back groupoids $G_{f_i}^{f_i}$ identify to $G_i$ and $f_i(M_i)$ meets all the orbits of $G$.

Equivalently, a Morita equivalence is given by a linking manifold $X$ with extra data: surjective smooth submersions $r:X\to G_1^{(0)}$ and $s:X\to G_2^{(0)}$ and compositions $G_1\times_{s,r}X\to X$,  $X\times_{s,r} G_2\to X$, $X\times_{r,r} X\to G_2$ and $X\times_{s,s} X\to G_1$ with natural associativity conditions (see \cite{MRW} for details). In the above situation, $X$ is the manifold $G_{f_2}^{f_1}$ and the extra data are the range and source maps and the composition rules of the groupoid $G_{f_1\sqcup f_2}^{f_1\sqcup f_2}\rightrightarrows M_1\sqcup M_2$ (see \cite{MRW}).

If the map $r:X\to G_1^{(0)}$ is surjective but $s:X\to G_2^{(0)}$ is not necessarily surjective, then $G_1$ is Morita equivalent to the restriction of $G_2$ to the open saturated subspace $s(X)$. We say that $G_1$ is \emph{sub-Morita} equivalent to $G_2$.

%\subsubsection*{Semi-direct products}\label{croise}
\subsection{Semi-direct products}\label{croise}

\begin{description}\item[Action of a groupoid on a space.] Recall  that an action of a groupoid $G \overset {r,s}\rightrightarrows  G ^{(0)}$ on a space $V$ is given by a map $p:V\to G ^{(0)}$ and the action $G \times_{s,p}V\to V$ denoted by $(g,x)\mapsto g.x$ with the requirements $p(g.x)=r(g),\ g.(h.x)=(gh).x$ and $u.x=x$ if $u=p(x)$.

In that case, we may form the crossed product groupoid $V\rtimes G$: 
\begin{itemize}
\item as a set  $V\rtimes G $ is the fibered product $V\times _{p,r} G $;
\item the unit space $(V\rtimes G)^{(0)}$ is $V$. The range and source maps are $r(x,g)=x$ and $s(x,g)=g^{-1}.x$;
\item the composition is given by $(x,g)(y,h)=(x,gh)$ (with $g.y=x$).
\end{itemize} 
If $G $ is a Lie groupoid, $M$ is a manifold and if all the maps defined are smooth and $p$ is a submersion, then $V\rtimes G $ is a Lie groupoid.

\item[Action of a group on a groupoid.] Let $\Gamma$ be a Lie group acting on a Lie groupoid $G\overset{r,s}{\rightrightarrows} M$ by Lie groupoid automorphisms. The set $G\times \Gamma$ is naturally a Lie groupoid $G\rtimes \Gamma \overset{r_{\rtimes},s_{\rtimes}}{ \rightrightarrows} M$ we put $r_{\rtimes}(g,\gamma)=r(g)$, $s_{\rtimes}(g,\gamma)=\gamma^{-1}(s(g))$ and, when $(g_1,\gamma_1)$ and $(g_2,\gamma_2)$ are composable, their product is $(g_1,\gamma_1)(g_2,\gamma_2)=(g_1\, \gamma_1(g_2),\gamma_1\gamma_2)$.

Note that the semi-direct product groupoid $G\rtimes \Gamma$ is canonically isomorphic to the quotient $\G/\Gamma$ of the product $\G=G\times (\Gamma\times \Gamma)$ of $G$ by the pair groupoid $\Gamma \times \Gamma$ where the $\Gamma$ action on $\G$ is the diagonal one: $\gamma\cdot (g,\gamma_1,\gamma_2)=(\gamma (g),\gamma^{-1}\gamma_1,\gamma^{-1} \gamma_2)$.

\item[Free and proper action of a group on a groupoid.]
When the action of $\Gamma$ on $G$ (and therefore on its closed subset $M=G^{(0)}$) is free and proper, we may define the quotient groupoid $G/\Gamma \overset{r,s}{\rightrightarrows} M/\Gamma$.

In that case, the groupoid $G/\Gamma$ acts on $M$ and the groupoid $G$ identifies with the action groupoid $M\rtimes (G/\Gamma)$. Indeed, let $p:M\to M/\Gamma$ and $q:G\to G/\Gamma$ be the quotient maps. If $x\in M$ and $h\in G/\Gamma$ are such that $s(h)=p(x)$, then there exists a unique $g\in G$ such that $q(g)=h$ and $s(g)=x$; we put then $h.x=r(g)$. It is then immediate that $\varphi:G\to M\times _{p,r}(G/\Gamma) $ given by $\varphi (g)=(r(g),q(g))$ is a groupoid isomorphism.

The groupoid $G/\Gamma$ is Morita equivalent to $G\rtimes \Gamma$: indeed one easily identifies $G\rtimes \Gamma$ with the pull back groupoid $(G/\Gamma)_q^q$ where $q:M\to M/\Gamma$ is the quotient map.

\smallskip Note also that in this situation the action of \(\Gamma\) on \(G\) leads to an action of $\Gamma$ on the Lie algebroid \(\gA G\) and \(\gA(G/\Gamma)\) identifies with \(\gA G /\Gamma\).

\begin{remark}\label{RemarqueToutePropre}
As the Lie groupoids we are considering need not be Hausdorff, the properness condition has to be relaxed. We will just assume that the action is \emph{locally proper}, \ie that every point in $G$ has a $\Gamma$-invariant neighborhood on which the action of $\Gamma$ is proper.
\end{remark}

\item[{Action of a groupoid on a groupoid.}] Recall  that an action of a groupoid $G \overset {r,s}\rightrightarrows  G ^{(0)}$ on a groupoid $H\overset {r_H,s_H}\rightrightarrows H^{(0)}$ is  by groupoid automorphisms (\cf \cite{Brown}) if $G $ acts on $H^{(0)}$ through a map $p_0:H^{(0)}\to G ^{(0)}$, we have $p=p_0\circ r_H=p_0\circ s_H$ and $g.(xy)=(g .x)(g.y)$.

In that case, we may form the crossed product groupoid $H\rtimes G =\G$: 
\begin{itemize}
\item as a set  $H\rtimes G $ is the fibered product $H\times _{p,r} G $;
\item the unit space $\G^{(0)}$ of $\G=H\rtimes G $ is $H^{(0)}$. The range and source maps are $r_{\G}(x,g)=r_H(x)$ and $s_{\G}(x,g)=g^{-1}.s_H(x)$;
\item the composition is given by $(x,g)(y,h)=(x(g.y),gh)$.
\end{itemize} 
If $G $ and $H$ are Lie groupoids and if all the maps defined are smooth and $p$ is a submersion, then $\G=H\rtimes G $ is a Lie groupoid.
\end{description}

\subsection{Index maps for Lie groupoids}

Recall (\cf \cite{MP, NWX}) that if $G$ is any Lie groupoid, the index map is an element in $KK(C_0(\gA^*G),C^*(G))$ which can be constructed thanks to the adiabatic groupoid $G_{ad}^{[0,1]}$ of $G$  as $$\ind_{G}=[ev_0]^{-1}\otimes [ev_1]$$ where  $$ev_0:C^*(G_{ad}^{[0,1]})\to C^*(G_{ad}(0))\simeq C_0(\gA^*G) \  \mbox{ and } \ ev_1:C^*(G_{ad}^{[0,1]})\to C^*(G_{ad}(1))\simeq C^*(G)$$ are the evaluation morphisms (recall that $[ev_0]$ is invertible).

%%%%% Pour l'index
\nomenclature[I, 01]{$\ind_{G}$}{The $KK$-element $[ev_0]^{-1}\otimes [ev_1]$, which belongs to $KK(C_0(\gA^*G),C^*(G))$, 
associated to the deformation groupoid $G_{ad}^{[0,1]}=G\times (0,1] \cup \gA(G)\times \{0\} \rightrightarrows G^{(0)}\times[0,1]$}
\nomenclature[I, 02]{$\wind_G$}{The connecting element, which belongs to $KK^1(C(S^*\gA \cG),C^*(\cG))$ associated to the short exact sequence $0\to C^*(G)\to \Psi^*(G)\to C(S^*\gA G)\to 0$}
%%%%%%

It follows quite immediately that the element $\wind_G\in KK^1(C(\bS^*\gA G),C^*(G))$ corresponding to the pseudodifferential exact sequence $$0\to C^*(G)\to \Psi^*(G)\to C(\bS^*\gA G)\to 0 \eqno E_{\Psi^*(G)}$$ is the composition $\wind_G=\ind_G\otimes q_{\gA^*G}$ where $q_{\gA^*G}\in KK^1(C(\bS^*\gA G),C_0(\gA^* G))$ corresponds to the pseudodifferential exact sequence for $\gA G$ which is $$0\to C_0(\gA^* G)\to C(B^* \gA G)\to C(\bS^* \gA G)\to 0\  \eqno  E_{\Psi^*(\gA G)} $$ This connecting element is immediately seen to be the element of $KK(C_0(\bS^*\gA G\times \R_+^*),C_0(\gA^* G))$ associated to the inclusion of $\bS^*\gA G\times \R_+^*$ as the open subset $\gA^* G\setminus G^{(0)}$ - where $ G^{(0)}$ sits in  $\gA^* G$ as the zero section.

\section{Remarks on exact sequences, Connes-Thom elements, connecting maps and index maps}\label{ne}

The first part of this section is a brief reminder of some quite classical facts about connecting elements associated to short exact sequences of $C^*$-algebras. 

The second part is crucial for  our main results of section 6: given a Lie groupoid and an open saturated subset of its unit space, we consider connecting maps and full index maps, compare them, compute them in some cases... In particular, we study a Fredholm realizability problem generalizing works of Albin and Melrose (\cite{AlMel1, AlMel2}) and study index maps using relative $K$-theory.

In the last part we study a proper action of $\R_+^*$ on a Lie groupoid $G$ with an open saturated subset wich is $\R_+^*$-invariant. We compare the connecting maps and the index maps of $G$ with those of $G/\R_+^*$, using Connes-Thom morphisms.

\subsection{A (well known) remark on exact sequences}

We will use the quite immediate (and well known) result:

\begin{lemma} \label{elabet}
Consider a commutative diagram of semi-split exact sequences of $C^*$-algebras
$$\xymatrix{
0\ar[r] &J_1\ar[r]\ar[d]_{f_J}& A_1\ar[r]^{q_1}\ar[d] _{f_A}&B_1\ar[r]\ar[d]_{f_B}& 0\\
0\ar[r] &J_2\ar[r]& A_2\ar[r] ^{q_2}&B_2\ar[r]& 0
}
$$ \begin{enumerate}
\item We have $\partial_1 \otimes [f_J]=[f_B]\otimes \partial_2$ where $\partial _i$ denotes the element in $KK^1(B_i,J_i)$ associated with the exact sequence $$\xymatrix{
0\ar[r] &J_i\ar[r]& A_i\ar[r] &B_i\ar[r]& 0.
}
$$
\item  If two of the vertical arrows are $KK$-equivalences, then so is the third one.
\end{enumerate}

\begin{notation} When $f:A\rightarrow B$ is a morphism of $C^*$-algebra, we will denote the corresponding mapping cone by $\Cn_f =\{(x,h)\in A\oplus B[0,1)\ ; \ h(0)=f(x)\}$.
%%%Pour l'index
\nomenclature[H, 06]{$\Cn_f$}{The mapping cone of a morphism $f:A\rightarrow B$ of $C^*$-algebra}
\end{notation}

\begin{proof}
\begin{enumerate}
\item See \eg \cite{CuSk}. Let $\Cn_{q_i}$ be the mapping cone of $q_i$ and $j_i:B_i(0,1)\to \Cn_{q_i}$ and $e_i:J_i\to \Cn_{q_i}$ the natural (excision) morphisms. The \emph{excision morphism} $e_i$ is $K$-invertible and $\partial_i=[j_i]\otimes [e_i]^{-1}$. 
\item For every separable $C^*$-algebra $D$, by applying the ``five lemma'' to the diagram $$\xymatrix{
...\ar[r] &KK^*(D,J_1)\ar[r]\ar[d] & KK^*(D,A_1)\ar[r]\ar[d]&KK^*(D,B_1)\ar[r]\ar[d]& KK^{*+1}(D,J_1)\ar[r]\ar[d]&...\\
...\ar[r] &KK^*(D,J_2)\ar[r]& KK^*(D,A_2)\ar[r] &KK^*(D,B_2)\ar[r]& KK^{*+1}(D,J_2)\ar[r]&...
}
$$we find that all vertical arrows are invertible. Applying this to $D=J_2$ (\resp $A_2$, $B_2$) we find a one sided inverse to  $[f_J]$  (\resp $f_A$, $f_B$). Applying this again to $D=J_1$ (\resp $A_1$, $B_1$), it follows that this inverse is two-sided.
\qedhere
\end{enumerate}
\end{proof}
\end{lemma}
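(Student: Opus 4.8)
The statement to prove is Lemma~\ref{elabet}, a standard compatibility result for connecting maps in $KK$-theory together with a five-lemma argument for $KK$-equivalence. Here is how I would organize the proof.

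\textbf{Part (a): compatibility of connecting elements.} The plan is to use the mapping cone description of the connecting element, which is the most robust way to see naturality. For each $i=1,2$, form the mapping cone $\Cn_{q_i}=\{(x,h)\in A_i\oplus B_i[0,1)\;;\;h(0)=q_i(x)\}$. There are canonical morphisms $e_i\colon J_i\to \Cn_{q_i}$ sending $y$ to $(y,0)$ (using $J_i=\ker q_i$), and $j_i\colon B_i(0,1)\to \Cn_{q_i}$ sending $h$ to $(0,h)$. The excision theorem (as in \cite{CuSk}) says $e_i$ is a $KK$-equivalence since the sequence is semi-split, and by definition of the boundary map one has $\partial_i=[j_i]\otimes[e_i]^{-1}\in KK^1(B_i,J_i)$ (the suspension $B_i(0,1)$ accounting for the degree shift, identifying $KK(B_i(0,1),\Cn_{q_i})$ with $KK^1(B_i,\Cn_{q_i})$). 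Now the vertical maps $f_J,f_A,f_B$ induce a morphism of mapping cones $f_{\Cn}\colon \Cn_{q_1}\to\Cn_{q_2}$, $(x,h)\mapsto(f_A(x),f_B\circ h)$, and by inspection this morphism makes two squares commute: $f_{\Cn}\circ e_1=e_2\circ f_J$ and $f_{\Cn}\circ j_1=j_2\circ (f_B)_{(0,1)}$ (where $(f_B)_{(0,1)}$ is $f_B$ suspended). Taking $KK$-classes and composing, we compute
$$\partial_1\otimes[f_J]=[j_1]\otimes[e_1]^{-1}\otimes[f_J]=[j_1]\otimes[f_{\Cn}]\otimes[e_2]^{-1}=[f_B]\otimes[j_2]\otimes[e_2]^{-1}=[f_B]\otimes\partial_2,$$
where the middle equality uses $[e_1]^{-1}\otimes[f_J]=[f_{\Cn}]\otimes[e_2]^{-1}$ (rearrangement of $[f_{\Cn}]\otimes[e_2]=[e_1]\otimes[f_J]$, valid since $e_1,e_2$ are invertible) and the equality $[j_1]\otimes[f_{\Cn}]=[f_B]\otimes[j_2]$ comes from the second commuting square. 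I should be a little careful about the placement of the suspension and the sign conventions in the Kasparov product of the degree-$1$ classes, but these are bookkeeping issues rather than genuine obstacles.

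\textbf{Part (b): the five-lemma argument.} Suppose two of $f_J,f_A,f_B$ are $KK$-equivalences; I want the third. For any separable $C^*$-algebra $D$, apply $KK^*(D,-)$ to both rows. Semi-splitness gives the two six-term exact sequences (here written as long exact sequences), and the vertical maps $f_J,f_A,f_B$ induce a ladder between them that commutes — commutativity of the squares not involving a boundary map is functoriality of $KK$, and commutativity of the squares involving a boundary map is exactly part (a) (applied with $D$ in the first slot). Since two of the three families of vertical arrows are isomorphisms (Kasparov product with a $KK$-equivalence is an isomorphism on $KK^*(D,-)$), the classical five lemma gives that the third family $KK^*(D,f_\bullet)$ is an isomorphism for every separable $D$. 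Now specialize: taking $D$ to be the target algebra of the relevant $f_\bullet$ produces a one-sided inverse class to $[f_\bullet]$, and taking $D$ to be the source algebra shows that this one-sided inverse is in fact two-sided, i.e.\ $[f_\bullet]$ is a $KK$-equivalence. (Concretely, if say $f_J,f_B$ are equivalences, apply the five lemma to conclude $KK^*(D,f_A)$ is iso; with $D=A_2$ get $\alpha\in KK(A_2,A_1)$ with $[f_A]\otimes\alpha=1$, and with $D=A_1$ get that $\alpha\otimes[f_A]=1$ as well.) This is precisely the contravariant Yoneda-type argument; there is nothing subtle here once part (a) is in hand.

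\textbf{Main obstacle.} The only real content is part (a), and within it the one genuinely delicate point is getting the excision/mapping-cone formalism lined up correctly: one must know that semi-splitness makes $e_i$ a $KK$-equivalence, must fix the identification $KK^1(B_i,J_i)\cong KK(B_i(0,1),J_i)$ and keep track of which suspension variable is which, and must verify the two squares of mapping cones commute strictly (not just up to homotopy) so that no correction term appears. Everything after that — part (b) — is a routine five-lemma diagram chase, so I expect to spend essentially all the effort making the mapping-cone diagram chase in part (a) clean and sign-correct.
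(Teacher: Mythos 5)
Your proof is correct and follows essentially the same route as the paper: part (a) via the mapping-cone/excision description $\partial_i=[j_i]\otimes[e_i]^{-1}$ and naturality of the cone construction (which the paper simply cites from Cuntz--Skandalis), and part (b) via the five lemma applied to $KK^*(D,-)$ with the same specializations of $D$ to produce a one-sided and then two-sided inverse. The only blemish is the transposed composition order in your parenthetical "$[f_{\Cn}]\otimes[e_2]=[e_1]\otimes[f_J]$" (it should read $[e_1]\otimes[f_{\Cn}]=[f_J]\otimes[e_2]$), but the identity you actually use is the correct one.
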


%%%Pour l'index
\nomenclature[I, 01]{$[f]$}{The $KK$-element, in $KK(A,B)$ associated to a morphism of C$^*$-algebra $f:A\to B$}

\subsection{Saturated open subsets, connecting maps and full index map}\label{gueule}

In this section, we let $G\rightrightarrows M$ be a Lie groupoid and $F$ be a closed subset of $M$ saturated for $G$. Put $W=M\setminus F$. Denote by $G_W$ the open subgroupoid $G_W=G_W^W$ of $G$ and $G_F$ its complement. If $F$ is not a submanifold, then $G_F$ is not a Lie groupoid, but as explained in remark \ref{Mnotmanifold}, we still can define $\Psi^*(G_F)$ (it is the quotient $\Psi^*(G)/\Psi^*(G_{W})$) the symbol map, \textit{etc.}

%%%%% Pour l'index
\nomenclature[H, 03]{$\Psi^*(G_F)$}{The quotient $\Psi^*(G)/\Psi^*(G_{W})$ where $F$ is a closed subset of $G^{(0)}$ saturated for $G$ and $W=G^{(0)}\setminus F$}
\nomenclature[H, 06]{$\Sigma^W(G)$}{The quotient $\Psi^*(G)/C^*(G_W)$}
\nomenclature[I, 04]{$\wind_{full}^W(G)$}{The connecting element, which belongs to $KK^1(\Sigma^W(G),C^*(G_W))$ associated to the short exact sequence $
0\longrightarrow C^*(G_{W})\longrightarrow  \Psi^*(G)\longrightarrow \Sigma^W(G)\longrightarrow  0$}

%%%%% Pour l'index
\nomenclature[I, 03]{$\partial_G^W$}{The connecting element, which belongs to $KK^1(C^*(G\vert_F),C^*(G\vert_{W}))$, 
associated to the short exact sequence $\xymatrix{0\ar[r] &C^*(G\vert_{W})\ar[r]&C^*(G)\ar[r]& C^*(G\vert_F)\ar[r]&0}$ where $W$ is a saturated open subset of $G^{(0)}$ and $F=G^{(0)}\setminus W$}
%%%
%%%%%

\medskip Define the \emph{full symbol algebra} $\Sigma^W(G)$ to be the quotient $\Psi^*(G)/C^*(G_W)$.

\medskip In this section we will be interested in the description of elements $\partial_G^W\in KK^1(C^*(G_F),C^*(G_W))$ and $\wind_{full}^W(G)\in KK^1(\Sigma^W(G),C^*(G_W))$ associated to the exact sequences $$0\longrightarrow C^*(G_{W})\longrightarrow C^*(G)\longrightarrow C^*(G_F)\longrightarrow 0\eqno E_\partial$$ and $$
0\longrightarrow C^*(G_{W})\longrightarrow  \Psi^*(G)\longrightarrow \Sigma^W(G)\longrightarrow  0
.\eqno E_{\widetilde {\ind}_{full}}
$$ To that end, it will be natural to assume that the restriction $G_F$ of $G$ to $F$ is amenable - so that the above sequences are exact and semi-split for the reduced as well as the full groupoid algebra. 

At some point, we wish to better control the $K$-theory of the $C^*$-algebras $C^*(G_F)$ and $\Sigma^W(G)$. We will assume that the index element $\ind_{G_F}\in KK(C_0((\gA ^*G)_{|F}),C^*(G_F))$ is invertible. This assumption is satisfied in our main applications in section \ref{Section6}.

 \subsubsection{Connecting map and index}
 
Assume that the groupoid $G_F$ is amenable. We have a diagram 

$$\xymatrix{&&&0\ar[d]&0\ar[d] \\
E_\partial : & 0\ar[r] &C^*(G_W)\ar[r]\ar@{=}[d]&C^*(G)\ar[r]\ar[d]& C^*(G_F) \ar[r]\ar[d]^j&0 \\
E_{\widetilde {\ind}_{full}} : & 0\ar[r] &C^*(G_W)\ar[r] &\Psi^*(G)\ar[d]\ar[r] &\Sigma^{W}(G)\ar[r]\ar[d]&0 \\
&&&C_0(\bS^*\gA G)\ar[d]\ar@{=}[r]&C_0(\bS^*\gA G)\ar[d]  \\
&&&0&0}$$

 It follows that $\partial _G^W=j^*(\wind_{full}^W(G))$ (proposition \ref{elabet}).

\subsubsection{Connecting maps}

\begin{proposition}\label{ConnectMapAbstract}
Let $\partial_G^W \in KK^1(C^*(G_F),C^*(G_{W}))$ be the element  associated with the exact sequence $$0\longrightarrow C^*(G_{W})\longrightarrow C^*(G)\longrightarrow C^*(G_F)\longrightarrow 0.$$

Similarly, let  $\partial_{\gA G}^W\in KK^1(C_0((\gA ^*G)_{|F}),C_0((\gA ^*G)_{|W}))$ be associated with the exact sequence $$0\longrightarrow C_0((\gA ^*G)_{|W})\longrightarrow C_0(\gA ^*G)\longrightarrow C_0((\gA ^*G)_{|F})\longrightarrow 0.$$ We have $\partial_{\gA G}^W\otimes \ind_{G_W}=\ind_{G_F}\otimes \partial_G^W$.

In particular, if the index element $\ind_{G_F}\in KK(C_0((\gA ^*G)_{|F}),C^*(G_F))$ is invertible, then the element $\partial_G^W$  is the composition $\ind_{G_F}^{-1}\otimes \partial_{\gA G}^W\otimes \ind_{G_W}$.
\begin{proof}
 
Indeed, we just have to apply twice proposition \ref{elabet} using the adiabatic deformation $G_{ad}^{[0,1]}$ and the diagram:
$$\xymatrix{
0\ar[r]&C_0((\gA ^*G)_{|W}))\ar [r] & C_0(\gA ^*G)\ar [r] &C_0((\gA ^*G)_{|F}))\ar[r]& 0\\
0\ar[r]& C^*(G_{ad}(W\times [0,1])\ar [u]^{ev_0}\ar [r]\ar[d]_{ev_1} & C^*(G_{ad})\ar [u]^{ev_0}\ar [r]\ar[d]_{ev_1}  &C^*(G_{ad}(F\times [0,1])) \ar [u]^{ev_0}\ar[d]_{ev_1} \ar[r]& 0\\
0\ar[r]& C^*(G_{W})\ar [r] & C^*(G)\ar [r] &C^*(G_F)\ar[r]& 0
}$$
\end{proof}
\end{proposition}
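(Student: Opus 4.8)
The plan is to exhibit both the exact sequence $E_\partial$ and its analogue for the algebroid as the two ``endpoints'' (evaluation at $t=1$ and at $t=0$) of a single short exact sequence of adiabatic groupoid $C^*$-algebras, and then to transport information between the two ends using the fact that evaluation at $0$ is a $KK$-equivalence. Concretely, I would work with $G_{ad}^{[0,1]}$. Since $F$ is closed and saturated in $M$, the subset $F\times[0,1]$ is closed and saturated in the unit space $M\times[0,1]$ of $G_{ad}^{[0,1]}$ (the deformation does not move points of $M$ within $M\times\{t\}$), and similarly $W\times[0,1]$ is open and saturated; restricting $G_{ad}^{[0,1]}$ to these pieces, i.e. $G_{ad}(W\times[0,1])$ and $G_{ad}(F\times[0,1])$, produces the middle row of the $3\times 3$ diagram in the statement. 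The evaluation morphism $ev_1$ recovers the bottom row $E_\partial$, and $ev_0$ recovers the top row after the identifications $C^*(G_{ad}(0))=C^*(\gA G)\cong C_0(\gA^*G)$ and its restrictions over $W$ and $F$; note also that $G_{ad}(W\times[0,1])=(G_W)_{ad}^{[0,1]}$ and $G_{ad}(F\times[0,1])=(G_F)_{ad}^{[0,1]}$.

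Before invoking Lemma \ref{elabet} I would check that all three rows are semi-split exact sequences of $C^*$-algebras, simultaneously for the reduced and the full completions. The top and bottom rows are such because $G_F$, hence also $(\gA G)_{|F}$ (a bundle of abelian Lie groups), is amenable; for the middle row I would use that the adiabatic groupoid of an amenable groupoid is amenable, so $C^*(G_{ad}(F\times[0,1]))$ is nuclear, while exactness is the usual open/closed saturated decomposition. I would then record the key input: in each of the three columns the vertical map $ev_0$ is a $KK$-equivalence --- this is the classical invertibility of $[ev_0]$ for the adiabatic groupoid over $[0,1]$, applied to $G$, to $G_W$ and to $G_F$ --- and that, by the very definition of the index element, $\ind_{G_W}=[ev_0^W]^{-1}\otimes[ev_1^W]$ and $\ind_{G_F}=[ev_0^F]^{-1}\otimes[ev_1^F]$, where the superscripts denote the restrictions of the evaluations to the $W$- and $F$-parts.

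The conclusion is then a diagram chase. Applying Lemma \ref{elabet}(a) to the upper half of the diagram (middle row mapping to the top row via $ev_0$) gives $\partial_{ad}^W\otimes[ev_0^W]=[ev_0^F]\otimes\partial_{\gA G}^W$, where $\partial_{ad}^W$ denotes the connecting element of the middle row; applying it to the lower half (middle row mapping to the bottom row via $ev_1$) gives $\partial_{ad}^W\otimes[ev_1^W]=[ev_1^F]\otimes\partial_G^W$. Since $[ev_0^W]$ and $[ev_0^F]$ are invertible I can eliminate $\partial_{ad}^W$ between these two identities, and using $[ev_1^W]=[ev_0^W]\otimes\ind_{G_W}$ and $[ev_1^F]=[ev_0^F]\otimes\ind_{G_F}$ I obtain $[ev_0^F]\otimes\partial_{\gA G}^W\otimes\ind_{G_W}=[ev_0^F]\otimes\ind_{G_F}\otimes\partial_G^W$; cancelling the invertible element $[ev_0^F]$ on the left yields exactly $\partial_{\gA G}^W\otimes\ind_{G_W}=\ind_{G_F}\otimes\partial_G^W$. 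If moreover $\ind_{G_F}$ is invertible, composing on the left with $\ind_{G_F}^{-1}$ gives the stated formula for $\partial_G^W$. I expect no genuine obstacle here: the only points requiring care are those of the second paragraph --- that $F\times[0,1]$ is really saturated in $G_{ad}^{[0,1]}$ and that the three rows are simultaneously semi-split for both completions --- and both follow from the amenability of $G_F$ together with the standard behaviour of the adiabatic construction; everything else is formal.
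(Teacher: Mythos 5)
Your proposal is correct and follows exactly the paper's argument: the same $3\times 3$ diagram built from $G_{ad}^{[0,1]}$ and its restrictions to $W\times[0,1]$ and $F\times[0,1]$, with Lemma \ref{elabet} applied twice (once along $ev_0$, once along $ev_1$) and the invertibility of $[ev_0]$ used to eliminate the middle connecting element. You have merely made explicit the saturation and semi-splitness checks that the paper leaves implicit.
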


\subsubsection{A general remark on the index}

In the same way as the index $\ind_G\in KK(C_0(\gA^*G),C^*(G))$ constructed using the adiabatic groupoid is more primitive and to some extent easier to handle than $\wind_G\in KK^1(C_0(\bS^*\gA G),C^*(G))$ constructed using the exact sequence of pseudodifferential operators, there is in this ``relative'' situation a natural more primitive element.

Denote by $\gA_W G=G_{ad}(F\times [0,1)\cup W\times \{0\})$ the restriction of $G_{ad}$ to the saturated locally closed subset $F\times [0,1)\cup W\times \{0\}$. Note that, since we assume that $G_F$ is amenable, and since $\gA  G$ is also amenable (it is a bundle groupoid), the groupoid $\gA_W G$ is amenable.

%%%%% Pour l'index
\nomenclature[G, 09]{$\gA_W G$}{The restriction of the adiabatic groupoid $G_{ad}$ to $F\times [0,1)\cup W\times \{0\}$ where $F$ is a closed subset of $G^{(0)}$ saturated for $G$ and $W=G^{(0)}\setminus F$}
%%%%% 

Similarly to \cite{DebLesc, DLR}, we define the \emph{noncommutative algebroid} of $G$ relative to $F$ to be $C^*(\gA _W G)$. Note that by definition we have: $$C^*(G_{ad}^{ [0,1)})/C^*(G_{ad}(W\times (0,1))=C^*(G_{ad}(F\times [0,1)\cup W\times \{0\})=C^*(\gA _W G)$$

We have an exact sequence $$0\to C^*(G_W\times (0,1])\longrightarrow C^*(G_{ad}(F\times [0,1)\cup W\times [0,1]))\overset{ev_0}\longrightarrow C^*(\gA _W G)\to 0,$$ 
where $ev_0 : C^*(G_{ad}(F\times [0,1)\cup W\times [0,1])) \to C^*(G_{ad}(F\times [0,1)\cup W\times \{0\})=C^*(\gA _W G)$ is the restriction morphism.
As $C^*(G_W\times (0,1])$ is contractible the $KK$-class $[ev_0]\in KK( C^*(G_{ad}(F\times [0,1)\cup W\times [0,1])),C^*(\gA _W G))$ is invertible. Let as usual $ev_1:C^*(G_{ad}(F\times [0,1)\cup W\times [0,1]))\to C^*(G_W)$ be the evaluation at $1$. We put:
$$\ind_G^W=[ev_0]^{-1}\otimes [ev_1] \in KK(C^*(\gA _W G),C^*(G_W))\ .$$

%%%%% Pour l'index
\nomenclature[I, 05]{$\ind_G^W$}{The $KK$-element $[ev_0]^{-1}\otimes [ev_1]$, which belongs to $KK(C^*(\gA _W G),C^*(G_W))$, 
associated to the groupoid $G_{ad}(F\times [0,1)\cup W\times [0,1])=G_W\times (0,1]\sqcup \gA _W G$}
%%%%% 

Recall from \cite[Rem 4.10]{DS1} and \cite[Thm. 5.16]{DS3} that there is a natural action of $\R$ on $\Psi^*(G)$ such that $\Psi^*(G)\rtimes \R$ is an ideal in $C^*(G_{ad}^{[0,1)})$ (using a homeomorphism of $[0,1)$ with $\R_+$). This ideal is the kernel of the composition $C^*(G_{ad}^{[0,1)})\overset {\ev_0}{\longrightarrow}C_0(\gA^* G)\to C(M)$. 

Recall that the restriction to $C^*(G)$ of the action of $\R$ is inner. It follows that $C^*(G_W)\subset \Psi^*(G)$ is invariant by the action of $\R$ - and $C^*(G_W)\rtimes \R=C^*(G_W)\otimes C_0(\R)=C^*(G_{ad}(W\times (0,1)))$. 

We thus obtain an action of $\R$ on $\Sigma^W(G)=\Psi^*(G)/C^*(G_W)$ and an inclusion $i:\Sigma^W(G)\rtimes \R\hookrightarrow C^*(\gA_WG)$.

\begin{proposition}\label{indGW}
The element $\widetilde{\ind}_{full}^W\in KK^1(\Sigma^W(G),C^*(G_W))$ corresponding to the exact sequence
$$
0\longrightarrow C^*(G_{W})\longrightarrow  \Psi^*(G)\longrightarrow \Sigma^W(G)\longrightarrow  0
.\eqno E_{\widetilde {\ind}_{full}}
$$ is the Kasparov product of:\begin{itemize}
\item the Connes-Thom element $[th]\in KK^1(\Sigma^W(G),\Sigma^W(G)\rtimes \R)$;
\item the inclusion $i:\Sigma^W(G)\rtimes \R\hookrightarrow C^*(\gA _W G)$;
\item the index $\ind_G^W=[ev_0]^{-1}\otimes [ev_1]$ defined above.
\end{itemize}
\begin{proof} 
By naturality of the Connes Thom element, it follows that $$\widetilde{\ind}^W_{full}\otimes [B]=-[th]\otimes  [\partial]$$ where $\partial\in KK^1(C^*(\gA _W G),C^*(G_W\times (0,1)))$ is the $KK^1$-element corresponding with the exact sequence $$\xymatrix{
0\ar[r]& C^*(G_W)\rtimes \R \ar [r]   & \Psi^*(G)\rtimes \R \ar [r]  &\Sigma^W(G)\rtimes \R    \ar[r]& 0\\
}$$ and $[B]\in KK^1(C^*(G_W),C^*(G_W)\rtimes \R)$ is the Connes-Thom element. Note that, since the action is inner, $[B]$ is actually the Bott element.

By the diagram 
$$\xymatrix{
0\ar[r]& C^*(G_W)\rtimes \R \ar [r]\ar[d]  & \Psi^*(G)\rtimes \R \ar [r]\ar[d]   &\Sigma^W(G)\rtimes \R \ar[d]^{i}  \ar[r]& 0\\
0\ar[r]& C^*(G_{W}\times (0,1))\ar [r] & C^*(G_{ad}^{[0,1)})\ar [r] &C^*(\gA_W G)\ar[r]& 0
}$$
we deduce that $[\partial]=i^*[\partial']$ where $\partial'$ corresponds to the second exact sequence.

Finally, we have a diagram
$$\xymatrix{&0\ar[d]&0\ar[d]& &\\
0\ar[r]& C^*(G_{W}\times (0,1)) \ar [r]\ar[d]  & C^*(G_{ad}^{[0,1)}) \ar [r]\ar[d]   & C^*(\gA_W G)\ar@{=}[d]  \ar[r]& 0\\
0\ar[r]& C^*(G_{W}\times (0,1])\ar[d]\ar [r] & C^*(G_{ad}(F\times [0,1)\cup W\times [0,1]))\ar[d]_{ev_1}\ar [r]^{\hskip 50pt ev_0} &C^*(\gA_W G)\ar[r]& 0\\
&C^*(G_W)\ar@{=}[r]\ar[d]&C^*(G_W)\ar[d]\\
&0&0}$$
where exact sequences are semisplit. Now the connecting element corresponding to the exact sequence $$\xymatrix{
0\ar[r]& C^*(G_{W}\times (0,1)) \ar [r] & C^*(G_{ad}(F\times [0,1)\cup W\times [0,1])) \ar [r]^{\hskip 23pt ev_0\oplus ev_1} &C^*(\gA_W G)\oplus C^*(G_W)\ar[r]&0}$$
 is $[\partial']\oplus [B]$ and it follows that $$[ev_0]\otimes [\partial']+[ev_1]\otimes  [B]=0.$$
 
As $\widetilde{\ind}_{full}^W\otimes  [B]=-[th]\otimes  [i]\otimes  [\partial']$ and $[\partial']=-[ev_0]^{-1}\otimes[ev_1]\otimes [B]$, the result follows from invertibility of the Bott element.
\end{proof}
\end{proposition}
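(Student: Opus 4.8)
The plan is to transport the problem into the adiabatic picture, exploiting the relationship recalled just above between $\Psi^*(G)\rtimes\R$, $C^*(G_W)\rtimes\R$ and the adiabatic groupoid $C^*$-algebras, and then to chase diagrams with Lemma \ref{elabet} and the naturality of the Connes--Thom isomorphism. \emph{First}, I would apply the crossed product by the $\R$-action to the exact sequence $0\to C^*(G_W)\to\Psi^*(G)\to\Sigma^W(G)\to 0$, obtaining $0\to C^*(G_W)\rtimes\R\to\Psi^*(G)\rtimes\R\to\Sigma^W(G)\rtimes\R\to 0$. Naturality of Connes--Thom gives $\widetilde{\ind}_{full}^W\otimes[B]=-[th]\otimes[\partial]$, where $[th]\in KK^1(\Sigma^W(G),\Sigma^W(G)\rtimes\R)$ and $[B]\in KK^1(C^*(G_W),C^*(G_W)\rtimes\R)$ are the Connes--Thom elements and $[\partial]$ is the connecting element of the crossed sequence. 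The key point to record here is that the $\R$-action on $C^*(G_W)$ is the restriction of the inner action on $C^*(G)$, so that $[B]$ is the Bott element and, in particular, $KK$-invertible; this is what will allow one to cancel it at the very end.

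\emph{Second}, I would identify the crossed sequence with the pullback along $i$ of an adiabatic exact sequence. Using $C^*(G_W)\rtimes\R=C^*(G_{ad}(W\times(0,1)))$, the fact (from \cite{DS1,DS3}) that $\Psi^*(G)\rtimes\R$ is an ideal of $C^*(G_{ad}^{[0,1)})$, and the inclusion $i\colon\Sigma^W(G)\rtimes\R\hookrightarrow C^*(\gA_W G)$, one gets a morphism of semi-split exact sequences onto $0\to C^*(G_W\times(0,1))\to C^*(G_{ad}^{[0,1)})\to C^*(\gA_W G)\to 0$, and Lemma \ref{elabet} then yields $[\partial]=i^*[\partial']$, where $[\partial']$ is the connecting element of this last sequence.

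\emph{Third}, I would compute $[\partial']$ in terms of $\ind_G^W=[ev_0]^{-1}\otimes[ev_1]$ using the deformation $C^*(G_{ad}(F\times[0,1)\cup W\times[0,1]))$. It carries the restriction $ev_0$ onto $C^*(\gA_W G)$, with contractible (hence $KK$-trivial) kernel so that $[ev_0]$ is invertible, and the evaluation $ev_1$ onto $C^*(G_W)$. Assembling $ev_0$ and $ev_1$ into the exact sequence with quotient $C^*(\gA_W G)\oplus C^*(G_W)$, whose connecting element is $[\partial']\oplus[B]$ — the first summand coming from the restriction to $(0,1)$, the second being the standard mapping-cylinder computation of the Bott element — one more application of Lemma \ref{elabet} gives $[ev_0]\otimes[\partial']+[ev_1]\otimes[B]=0$, hence $[\partial']=-\ind_G^W\otimes[B]$. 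Substituting back, $\widetilde{\ind}_{full}^W\otimes[B]=-[th]\otimes[i]\otimes[\partial']=[th]\otimes[i]\otimes\ind_G^W\otimes[B]$, and cancelling the invertible Bott element $[B]$ gives the asserted factorization $\widetilde{\ind}_{full}^W=[th]\otimes[i]\otimes\ind_G^W$.

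I expect the main difficulty to lie in the \emph{second} step: carefully setting up and using the identifications of $\Psi^*(G)\rtimes\R$ and $C^*(G_W)\rtimes\R$ inside the adiabatic groupoid algebras (these rely on the results of \cite{DS1,DS3} recalled before the statement), together with keeping track of all the signs entering through the naturality of Connes--Thom and through the assembled $ev_0\oplus ev_1$ sequence. Once the commuting diagrams are in place, the remainder is a routine chase via Lemma \ref{elabet} together with the invertibility of the Bott element.
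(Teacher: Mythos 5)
Your proposal is correct and follows essentially the same route as the paper's proof: the naturality of the Connes--Thom element giving $\widetilde{\ind}^W_{full}\otimes[B]=-[th]\otimes[\partial]$, the identification $[\partial]=i^*[\partial']$ via the adiabatic sequence, the computation $[ev_0]\otimes[\partial']+[ev_1]\otimes[B]=0$ from the $ev_0\oplus ev_1$ exact sequence, and the final cancellation of the invertible Bott element. No substantive differences.
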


\subsubsection{Full symbol algebra and index}

 Denote by $\Psi_F^*(G)$ the subalgebra $C_0(M)+\Psi^*(G_{W})$ of $\Psi^*(G)$. It is the algebra of pseudodifferential operators which become trivial (\ie multiplication operators) on $F$. Let $\Sigma_F(G)=\Psi_F^*(G)/C^*(G_{W})$ be the algebra of the corresponding symbols. It is the subalgebra $C_0(M)+C_0(\bS^* \gA G_{W})$ of $C_0(\bS^* \gA G)$ of symbols $a(x,\xi)$ with $x\in M$ and $\xi\in (\bS^* \gA G)_x$ whose restriction on $F$ does not depend on $\xi$.

%%%%% Pour l'index
\nomenclature[H, 05]{$\Psi_F^*(G)$}{The subalgebra $C_0(M)+\Psi^*(G_{W})$ of $\Psi^*(G)$}
\nomenclature[H, 07]{$\Sigma_F(G)$}{The algebra $\Psi_F^*(G)/C^*(G_{W})$}
%%%%%

\begin{lemma} \label{lemme3.5}Assume that the index element $\ind_{G_F}\in KK(C_0((\gA ^*G)_{|F}),C^*(G_F))$ is invertible, \ie that the $C^*$-algebra of the adiabatic groupoid $C^*(G_{ad}(F\times [0,1)))$ is $K$-contractible.
\begin{enumerate}
\item The inclusion $j_\psi:C_0(F)\to \Psi^*(G_{F})$  is a $KK$-equivalence.
\item The inclusion $j_\sigma:\Sigma_F(G) =\Psi_F^*(G)/C^*(G_{W}) \to \Sigma^W(G)$ is also a $KK$-equivalence.\label{lemme3.5b}
\end{enumerate}
\begin{proof}
\begin{enumerate}
\item Consider the diagram
$$\xymatrix{
0\ar[r]&C_0((\gA ^*G)_{|F})\ar [r] & C_{0}((B^*\gA G)_{|F})\ar [r] &C_0((\bS^*\gA G)_{|F})\ar[r]& 0\\
0\ar[r]& C^*(G_{ad}(F\times [0,1]))\ar [u]^{ev_0}\ar [r]\ar[d]_{ev_1} & \Psi^*(G_{ad}(F\times [0,1]))\ar [u]^{ev_0}\ar [r]\ar[d]_{ev_1}  &C((\bS^*\gA G)_{|F}\times[0,1]) \ar [u]^{ev_0}\ar[d]_{ev_1} \ar[r]& 0\\
 0\ar[r]& C^*(G_F)\ar [r] & \Psi^*(G_F)\ar [r] &C((\bS^*\gA G)_{|F})\ar[r]& 0
}$$
where the horizontal exact sequences are the pseudodifferential exact sequences $E_{\Psi^*(\gA G)_F}$, $E_{\Psi^*(G_{ad}(F\times [0,1]))}$ and $E_{\Psi^*(G_F)}$. Since $\ind_{G_F}$ is invertible $ev_1:C^*(G_{ad}(F\times [0,1])\to C^*(G_F)$ is a $KK$-equivalence. Hence,
the left and right vertical arrows are all $KK$-equivalences, and therefore so are the middle ones. The inclusion $C_0(F)$ in $C_0((B^*\gA G)_{|F})$ is a homotopy equivalence and therefore the inclusions $C_0(F)\to \Psi^*(G_{ad}(F\times [0,1]))$ and $C_0(F)\to \Psi^*(G_{F})$ are $KK$-equivalences.

\item 
Apply Lemma \ref{elabet} to the diagrams

$$\xymatrix{
0\ar[r] &\Psi^*(G_{W})\ar[r]\ar@{=}[d]& \Psi_F^*(G)\ar[r]\ar[d]_{J_\psi} &C_0(F)\ar[r]\ar[d]_{j_\psi}& 0\\
0\ar[r] &\Psi^*(G_{W})\ar[r]& \Psi^*(G)\ar[r] &\Psi^*(G_F)\ar[r]& 0
}
$$

$$\xymatrix{
0\ar[r] &C^*(G_{W})\ar[r]\ar@{=}[d]& \Psi_F^*(G)\ar[r]\ar[d]_{J_\psi} &\Sigma_F(G)\ar[r]\ar[d]_{j_\sigma}& 0\\
0\ar[r] &C^*(G_{W})\ar[r]& \Psi^*(G)\ar[r] &\Sigma^W(G)\ar[r]& 0
}
$$ 
we find that $J_\Psi$ and $j_\sigma$ are $K$-equivalences.
\qedhere
\end{enumerate}
\end{proof}
\end{lemma}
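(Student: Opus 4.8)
The plan is to deduce both parts from the diagram-chase Lemma~\ref{elabet}(b), fed by the adiabatic deformation $G_{ad}(F\times[0,1])$. First I would record two harmless reformulations of the hypothesis: since the kernel of $ev_0:C^*(G_{ad}(F\times[0,1]))\to C^*(\gA G_{|F})\simeq C_0((\gA^*G)_{|F})$ is the contractible algebra $C^*(G_F\times(0,1])$, the class $[ev_0]$ is always $KK$-invertible, and as $\ind_{G_F}=[ev_0]^{-1}\otimes[ev_1]$ the assumption ``$\ind_{G_F}$ invertible'' says exactly that $ev_1:C^*(G_{ad}(F\times[0,1]))\to C^*(G_F)$ is a $KK$-equivalence, equivalently that its kernel $C^*(G_{ad}(F\times[0,1)))$ is $K$-contractible. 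Throughout, the commutativity of every symbol quotient below (together with the standing amenability of $G_F$) makes all short exact sequences semi-split, so Lemma~\ref{elabet} applies; and $\Psi^*$ of the longitudinally smooth groupoids over the manifolds with corners $F$ and $F\times[0,1]$ is read through the convention of Remark~\ref{Mnotmanifold}. One should also note the elementary fact that $F\times[0,1]$ is saturated in $M\times\R$ for $G_{ad}$ (its orbits are those of $G$ for $t\neq0$ and points for $t=0$), so that $G_{ad}(F\times[0,1])$ is legitimate.

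For part~(a), I would stack the three pseudodifferential exact sequences of $\gA G_{|F}$, of $G_{ad}(F\times[0,1])$ and of $G_F$ into a commuting ladder of three rows, with $ev_0$ from the middle row up to the top one and $ev_1$ from the middle row down to the bottom one. On the ideal columns these are the maps just discussed ($ev_0$ a $KK$-equivalence always, $ev_1$ one by hypothesis); on the symbol columns they are the two endpoint evaluations $C((\bS^*\gA G)_{|F}\times[0,1])\to C((\bS^*\gA G)_{|F})$, which are genuine homotopy equivalences. Two applications of Lemma~\ref{elabet}(b), to the upper pair of rows and then to the lower pair, force the middle-column maps $ev_0:\Psi^*(G_{ad}(F\times[0,1]))\to C_0((B^*\gA G)_{|F})$ and $ev_1:\Psi^*(G_{ad}(F\times[0,1]))\to\Psi^*(G_F)$ to be $KK$-equivalences. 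Now $C_0(F)$ sits inside each of the three middle-column algebras as the operators of multiplication by functions on the unit space pulled back from $F$, compatibly with $ev_0$ and $ev_1$; its image in $C_0((B^*\gA G)_{|F})$ is the zero section, a deformation retract, so that inclusion is a homotopy equivalence. Since $ev_0$ and this inclusion are $KK$-equivalences, so is $C_0(F)\hookrightarrow\Psi^*(G_{ad}(F\times[0,1]))$; composing with the $KK$-equivalence $ev_1$ and observing that $ev_1$ carries this copy of $C_0(F)$ onto $j_\psi$, we conclude that $j_\psi$ is a $KK$-equivalence.

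For part~(b), I would invoke Lemma~\ref{elabet}(b) twice more. First apply it to the commuting diagram with rows $0\to\Psi^*(G_W)\to\Psi_F^*(G)\to C_0(F)\to0$ and $0\to\Psi^*(G_W)\to\Psi^*(G)\to\Psi^*(G_F)\to0$ (using $\Psi_F^*(G)/\Psi^*(G_W)\simeq C_0(M)/C_0(W)=C_0(F)$), with the identity on the ideals and $j_\psi$ on the quotients: two of the three vertical maps are $KK$-equivalences (the identity, and $j_\psi$ by part~(a)), hence so is the middle one, $J_\psi:\Psi_F^*(G)\to\Psi^*(G)$. Then apply it to the diagram with rows $0\to C^*(G_W)\to\Psi_F^*(G)\to\Sigma_F(G)\to0$ and $0\to C^*(G_W)\to\Psi^*(G)\to\Sigma^W(G)\to0$, with the identity on the ideals, $J_\psi$ in the middle (just shown to be a $KK$-equivalence), and $j_\sigma$ on the quotients: again two verticals are $KK$-equivalences, so $j_\sigma$ is one as well, which is the claim.

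The step I expect to be most delicate is not an individual homotopy but the surrounding bookkeeping: pinning down the three quotient algebras $C_0(F)$, $\Sigma_F(G)$, $\Sigma^W(G)$ and the ideal columns, checking that every exact sequence in play is semi-split (where amenability of $G_F$ and commutativity of the symbol algebras enter), and — the real crux of the diagram chase closing part~(a) — verifying that the three copies of $C_0(F)$ inside $C_0((B^*\gA G)_{|F})$, $\Psi^*(G_{ad}(F\times[0,1]))$ and $\Psi^*(G_F)$ are intertwined by $ev_0$ and $ev_1$, the first inclusion being the zero section and the third being $j_\psi$.
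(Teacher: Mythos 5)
Your proof is correct and follows essentially the same route as the paper's: the same three-row ladder of pseudodifferential exact sequences for $\gA G_{|F}$, $G_{ad}(F\times[0,1])$ and $G_F$ fed into Lemma~\ref{elabet} for part (a), and the same two applications of Lemma~\ref{elabet} (first to identify $J_\psi$ as a $KK$-equivalence via $j_\psi$, then to deduce the same for $j_\sigma$) for part (b). The bookkeeping you flag at the end — semi-splitness and the compatibility of the three copies of $C_0(F)$ under $ev_0$ and $ev_1$ — is exactly what the paper leaves implicit, and your verification of it is sound.
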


The diagram in lemma \ref{lemme3.5}.\ref{lemme3.5b}) shows that $\partial_F=j_\sigma^*(\wind_{full}^W(G))$ where $\partial_F\in KK^1(\Sigma_F(G),C^*(G_{W}))$ is the $KK$-element associated with the exact sequence $$\xymatrix{
0\ar[r] &C^*(G_{W})\ar[r] & \Psi_F^*(G)\ar[r] &\Sigma_F(G)\ar[r] & 0.
}
$$

%%%%% Pour l'index
\nomenclature[I, 06]{$\partial_F$}{The connecting element, which belongs to $KK^1(\Sigma_F(G),C^*(G_{W}))$ associated to the short exact sequence $\xymatrix{ 0\ar[r] &C^*(G_{W})\ar[r] & \Psi_F^*(G)\ar[r] &\Sigma_F(G)\ar[r] & 0 }$}
%%%%%

So, let's compute the $KK$-theory of $\Sigma_F(G)$ and the connecting element $\partial_F$.

\medskip Consider the vector bundle $\gA G$ as a groupoid (with objects $M$). It is its own algebroid - with anchor $0$. With the notation in \ref{notation1.3},
\begin{itemize}
\item $C^*(\gA G)$ identifies with $C_0(\gA^* G)$ and $C^*(\gA G_W)$ with $C_0(\gA^* G_W)$;
\item $\Psi ^*(\gA G)$  identifies with $C_0(B^*\gA G)$; it is homotopy equivalent to $C_0(M)$; 
\item the spectrum of $\Psi_F ^*(\gA G)$ is $B^*_F\gA G$ the quotient of $B^*\gA G$ where we identify two points $(x,\xi)$ and $(x,\eta)$ for $x\in F$; it is also homotopy equivalent to $C_0(M)$.
\item the algebroid of the groupoid $\gA G$ is $\gA G$ itself; therefore, $\Sigma_F(\gA G)=\Sigma_F(G)$;  its spectrum is $\bS^*_F\gA G$ which is the image of $\bS^*\gA G$ in $B^*_F\gA G$. 
\end{itemize}

We further note.
\begin{enumerate}
\item Let $k:C_0(\gA^* G_W)\to C_0(M)$ be given by $k(f)(x)=\begin{cases}f(x,0)& \hbox{if\ } x\in W\\0& \hbox{if\ } x\in F\end{cases}$. We find a commutative diagram
\(\xymatrix{
C_0(\ronde B^*\gA G_W)\ar[r]\ar[d]& C_0(B_F^*\gA G)\ar[d]\\
C_0(\gA^* G_W)\ar[r]^k& C_0(M)}\)
where the vertical arrows are homotopy equivalences. \label{commdiagr(d)}

\item the exact sequence $0\to C^*(\gA G_W)\to \Psi_F^*(\gA G)\to \Sigma_F(\gA G)\to 0$, reads $0\to C_0(\ronde B^*\gA G_W)\to C_0(B_F^*\gA G)\to C_0(S_F^*\gA G)\to 0$.
\label{suiteexacte(f)}
\end{enumerate}

We deduce using successively (\ref{suiteexacte(f)}) and (\ref{commdiagr(d)}):
\begin{proposition}\label{Sigma=cone}
\begin{enumerate}
\item The algebra $C_0(S_F^*\gA G)$ is $KK^1$-equivalent with the mapping cone of the inclusion $C_0(\ronde B^*\gA G_W)\to C_0(B_F^*\gA G)$.
\item This mapping cone is homotopy equivalent to the mapping cone of the morphism $k$.

\end{enumerate}
 \hfill$\square$

\end{proposition}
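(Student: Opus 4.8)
\textbf{Proof plan for Proposition \ref{Sigma=cone}.}

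The plan is to deduce both assertions directly from Lemma \ref{elabet} together with the two structural observations (\ref{suiteexacte(f)}) and (\ref{commdiagr(d)}) just recorded, so essentially no new computation is needed. For part (1), recall that for any semi-split short exact sequence $0\to J\to A\to B\to 0$ the quotient $B$ is $KK$-equivalent to the suspension of the mapping cone of the inclusion $J\to A$ — equivalently, the mapping cone $\Cn_{J\to A}$ is $KK^1$-equivalent to $B$ via the excision morphism $e\colon J\to \Cn_{q}$ used in the proof of Lemma \ref{elabet} (that $e$ is $K$-invertible, and $B(0,1)\to \Cn_q$ realizes the connecting map). First I would apply this to the exact sequence exhibited in (\ref{suiteexacte(f)}),
$$0\to C_0(\ronde B^*\gA G_W)\to C_0(B_F^*\gA G)\to C_0(\bS_F^*\gA G)\to 0,$$
whose semi-splitness is automatic since all three algebras are commutative (or, concretely, since the quotient is nuclear). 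This immediately gives that $C_0(\bS_F^*\gA G)$ is $KK^1$-equivalent to the mapping cone of $C_0(\ronde B^*\gA G_W)\to C_0(B_F^*\gA G)$, which is assertion (1).

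For part (2), I would feed the commutative square of (\ref{commdiagr(d)}),
$$\xymatrix{
C_0(\ronde B^*\gA G_W)\ar[r]\ar[d]& C_0(B_F^*\gA G)\ar[d]\\
C_0(\gA^* G_W)\ar[r]^k& C_0(M),
}$$
into the general fact that a commutative square of $C^*$-algebras induces a canonical $*$-homomorphism between the mapping cones of its two horizontal arrows, and that this map is a homotopy equivalence whenever the two vertical arrows are homotopy equivalences. Since the vertical arrows in (\ref{commdiagr(d)}) are precisely stated to be homotopy equivalences (the bundles of open balls deformation-retract onto their zero sections $W$, resp. $M$), the induced map $\Cn_{C_0(\ronde B^*\gA G_W)\to C_0(B_F^*\gA G)}\to \Cn_k$ is a homotopy equivalence. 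Concretely one writes the homotopy inverse and the two connecting homotopies levelwise from those of the vertical maps, compatibly with the cone parameter; this is the kind of routine naturality check I would only sketch. Chaining with (1) yields that $C_0(\bS_F^*\gA G)$ is $KK^1$-equivalent to $\Cn_k$.

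The only point requiring a little care — and the one I would flag as the main obstacle — is the homotopy-equivalence statement for mapping cones in part (2): one must check that the retraction homotopies on $C_0(\ronde B^*\gA G_W)$ and on $C_0(B_F^*\gA G)$ can be chosen \emph{compatibly with the horizontal inclusion}, so that they assemble into a genuine homotopy of pairs and hence descend to the mapping cones. This is true because both retractions are induced by the same fiberwise radial contraction of balls (scaling $\xi\mapsto t\xi$), which manifestly commutes with the inclusion $\ronde B^*\gA G_W\hookrightarrow B_F^*\gA G$; but it is worth saying explicitly rather than invoking an abstract ``functoriality of mapping cones'' black box. Everything else is a direct application of Lemma \ref{elabet} and the bookkeeping already set up before the proposition.
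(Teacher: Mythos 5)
Your proposal is correct and follows exactly the route the paper intends: the paper's "proof" is literally the instruction to combine the exact sequence (\ref{suiteexacte(f)}) with the commutative square (\ref{commdiagr(d)}), and you have supplied precisely the two standard facts (quotient $\simeq_{KK^1}$ mapping cone of the inclusion for a semi-split sequence, and invariance of mapping cones under compatible vertical homotopy equivalences) needed to make that deduction, including the right point to be careful about in part (2). The only quibble is the parenthetical describing the left vertical arrow of (\ref{commdiagr(d)}) as a retraction onto the zero section $W$ — it is rather the fiberwise radial homeomorphism $\ronde B^*\gA G_W\cong \gA^*G_W$ — but this does not affect the argument.
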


Note finally that we have a diagram
$$\xymatrix{
0\ar[r]&C_0((\gA ^*G)_{|W})\ar [r] & \Psi^*_F(\gA G)\ar [r] &\Sigma_F(\gA G)\ar[r]& 0\\
0\ar[r]& C^*(G_{ad}(W\times [0,1]))\ar [u]^{ev_0}\ar [r]\ar[d]_{ev_1} & \Psi_{F\times [0,1]}^*(G_{ad})\ar [u]^{ev_0}\ar [r]\ar[d]_{ev_1}  &\Sigma_{F\times[0,1]}(G_{ad}) \ar [u]^{ev_0}\ar[d]_{ev_1} \ar[r]& 0\\
0\ar[r]& C^*(G_W)\ar [r] & \Psi_F^*(G)\ar [r] &\Sigma_F(\gA G)\ar[r]& 0
}$$
The right vertical arrows are $KK$-equivalences, and therefore we find $\partial \otimes[ev_0]^{-1}\otimes [ev_1] =\partial_F$, where $\partial$ is the connecting element of the first horizontal exact sequence. To summarize, we have proved:
\begin{proposition} \label{avita}Assume that the index element $\ind_{G_F}\in KK(C_0((\gA ^*G)_{|F}),C^*(G_F))$ is invertible.
\begin{enumerate}
\item The inclusion $j_\sigma:\Sigma_F(G)\to \Sigma^W(G)$ is a $KK$-equivalence.
\item The analytic index $\wind_{full}^W(G) \in KK^1(\Sigma^W(G),C^*(G_W))$ corresponding to the exact sequence  $$\xymatrix{
0\ar[r] &C^*(G_{W})\ar[r]& \Psi^*(G)\ar[r] &\Sigma^W(G)\ar[r]& 0
}
$$ is the Kasparov product of \begin{itemize}
\item the element $[j_\sigma]^{-1}\in KK(\Sigma^W(G), \Sigma_F(G))$;
\item the connecting element $\partial \in KK^1(\Sigma_F(\gA G),C_0((\gA ^*G)_{|W}))$ associated with the exact sequence of (abelian) $C^*$-algebras $$\xymatrix{
0\ar[r]&C_0((\gA ^*G)_{|W})\ar [r] & \Psi^*_F(\gA G)\ar [r] &\Sigma_F(\gA G)\ar[r]& 0;}$$
\item the analytic index element $\ind_{G_W}$ of $G_W$, \ie the element $$[ev_0]^{-1}\otimes [ev_1]\in KK(C_0((\gA ^*G)_{|W}),C^*(G_W)).$$
\hfill$\square$
\end{itemize}
\end{enumerate}
\end{proposition}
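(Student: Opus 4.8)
The plan is to \emph{assemble} the facts established in the paragraphs immediately preceding the statement, chaining the relevant $KK^1$-classes through the commutative diagrams of semi-split extensions already written and invoking Lemma \ref{elabet}. Throughout I use the standing hypothesis that $G_F$ is amenable — so that all extensions in sight are semi-split (for the reduced as well as the full completions) and the connecting $KK^1$-classes are defined — together with the hypothesis that $\ind_{G_F}$ is invertible. Part (1) is nothing but Lemma \ref{lemme3.5}.\ref{lemme3.5b}): under that hypothesis, $j_\sigma:\Sigma_F(G)\to\Sigma^W(G)$ is a $KK$-equivalence. Next I would recall the relation $\partial_F=[j_\sigma]\otimes\wind_{full}^W(G)$ (already noted: it follows from applying Lemma \ref{elabet}.(1) to the two-row diagram with columns $(\id,J_\psi,j_\sigma)$ comparing $0\to C^*(G_W)\to\Psi_F^*(G)\to\Sigma_F(G)\to 0$ with $0\to C^*(G_W)\to\Psi^*(G)\to\Sigma^W(G)\to 0$), whence $\wind_{full}^W(G)=[j_\sigma]^{-1}\otimes\partial_F$ by Part (1). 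So the whole content of Part (2) reduces to establishing $\partial_F=\partial\otimes\ind_{G_W}$.

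For the latter I would use the three-row diagram displayed just before the statement: its rows are the $\Psi_F^*$-extensions of $\gA G$ (top), of $G_{ad}$ restricted to $M\times[0,1]$, i.e. $G_{ad}^{[0,1]}$ (middle), and of $G$ (bottom), with vertical maps the evaluations $ev_0$ (middle to top) and $ev_1$ (middle to bottom). The key geometric point, which I would establish first, is that $\Sigma_F$ and $\Psi_F^*$ of a Lie groupoid depend only on its (dual) Lie algebroid, and that the algebroid of $G_{ad}^{[0,1]}$ over $M\times[0,1]$ is $\gA G\times[0,1]$; hence $\Sigma_{F\times[0,1]}(G_{ad}^{[0,1]})\cong C([0,1],\Sigma_F(\gA G))$, where under this identification $ev_0$ and $ev_1$ are evaluation at $0$ and at $1$, so they are homotopic $*$-homomorphisms onto $\Sigma_F(\gA G)=\Sigma_F(G)$, hence equal and invertible in $KK$. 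On the ideal side, $ev_0:C^*(G_{ad}(W\times[0,1]))\to C_0((\gA^*G)_{|W})$ is a $KK$-equivalence because its kernel $C^*(G_W\times(0,1])$ is contractible.

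Now the diagram chase: writing $\partial^{(2)}$ for the connecting element of the middle row, Lemma \ref{elabet}.(1) applied to the top two rows gives $\partial^{(2)}\otimes[ev_0]_J=[ev_0]_\Sigma\otimes\partial$, and applied to the bottom two rows gives $\partial^{(2)}\otimes[ev_1]_J=[ev_1]_\Sigma\otimes\partial_F$. Using that $[ev_0]_J$, $[ev_0]_\Sigma$, $[ev_1]_\Sigma$ are invertible and that $[ev_0]_\Sigma=[ev_1]_\Sigma$, I eliminate $\partial^{(2)}$ to obtain $\partial_F=\partial\otimes\big([ev_0]_J^{-1}\otimes[ev_1]_J\big)$. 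Finally $[ev_0]_J^{-1}\otimes[ev_1]_J=\ind_{G_W}$ by the very definition of the adiabatic index, since $G_{ad}(W\times[0,1])=(G_W)_{ad}^{[0,1]}$ and $\gA^*(G_W)=(\gA^*G)_{|W}$. Substituting back into $\wind_{full}^W(G)=[j_\sigma]^{-1}\otimes\partial_F$ yields the claimed formula $\wind_{full}^W(G)=[j_\sigma]^{-1}\otimes\partial\otimes\ind_{G_W}$.

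The only genuinely non-formal point — and the one I would be most careful about — is the claim that the two evaluations on $\Sigma_{F\times[0,1]}(G_{ad}^{[0,1]})$ agree in $KK$; everything else is diagram chasing against Lemma \ref{elabet} together with invertibilities that are either established here or immediate (contractibility of a cone). This claim rests entirely on the fact that $\Sigma_F$ and $\Psi_F^*$ see only the algebroid, so that in the adiabatic direction one gets a \emph{constant} family $C([0,1],\Sigma_F(\gA G))$ rather than one genuinely varying with the deformation parameter; I would double-check that the spectrum identification $\bS^*_{F\times[0,1]}\big(\gA(G_{ad}^{[0,1]})\big)\cong(\bS^*_F\gA G)\times[0,1]$ is compatible with the maps $ev_t$, and that the middle-row extension is semi-split (it is, since $G_F$ and $\gA G$ are amenable).
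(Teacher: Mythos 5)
Your proposal is correct and follows essentially the same route as the paper: part (1) is Lemma \ref{lemme3.5}.\ref{lemme3.5b}), the identity $\partial_F=j_\sigma^*(\wind_{full}^W(G))$ comes from the two-row comparison of the $\Psi^*_F$- and $\Psi^*$-extensions, and $\partial_F=\partial\otimes\ind_{G_W}$ is extracted from the three-row adiabatic diagram via Lemma \ref{elabet}. Your explicit justification that $[ev_0]_\Sigma=[ev_1]_\Sigma$ on $\Sigma_{F\times[0,1]}(G_{ad})\cong C([0,1],\Sigma_F(\gA G))$ — needed to cancel the symbol-column equivalences — is a point the paper leaves implicit, and you handle it correctly.
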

%\todo{finir}

\subsubsection{Fredholm realization}
Let $\sigma$ be a classical  symbol which defines an element in $K_1(C_0(\bS^*\gA G))$. A natural question is: when can this symbol be lifted to a pseudodifferential element which is invertible modulo $C^*(G_W)$?

In particular, if $G_W$ is the pair groupoid $W\times W$, this question reads: when can this symbol be extended to a Fredholm operator? Particular cases of this question were studied in \cite{AlMel1, AlMel2}.

Consider the exact sequences: $$\xymatrix{  & &  0 & 0 & & \\
E: &0\ar[r] &C^*(G_{F})\ar[r] \ar[u] & \Sigma^W(G)\ar[r]^{q} \ar[u] &C_0(\bS^*\gA G)\ar[r] \ar@{=}[d] & 0 \\  & 0\ar[r]  &C^*(G)\ar[r] \ar[u] & \Psi^*(G) \ar[u]  \ar[r] & C_0(\bS^*\gA G) \ar[r] & 0  \\  & & C^*(G_W) \ar@{=}[r] \ar[u] & C^*(G_W) \ar[u] & & & \\  & & 0 \ar[u] & 0 \ar[u] & & }
$$ 
The element $\sigma$ is an invertible element in $M_n(C_0(\bS^*\gA G)^+)$ (where $ C_0(\bS^*\gA G)^+$ is obtained by adjoining a unit to $C_0(\bS^*\gA G)$ - if $G^{(0)}$ is not compact). The question is: when can $\sigma$ be lifted to an invertible element of $M_n( \Sigma^W(G)^+)$. 

By the $K$-theory exact sequence, if this happens then the class of $\sigma$ is in the image of $K_1(\Sigma^W(G))$ and therefore its image via the connecting map of the exact sequence $E$ is $0$ in $K_0(C^*(G_{F}))$. Conversely, if  the image of $\sigma $ via the connecting map of $E$ vanishes, then the class of $\sigma$ in $K_1(C_0(\bS^*\gA G))$ is in the image of $K_1(\Sigma^W(G))$. This means that there exists $p\in \N$ and an invertible element $x\in M_{n+p}(\Sigma^W(G)^+)$ such that $q(x)$ and $\sigma \oplus 1_p$ are in the same path connected component of $GL_{n+p}( C_0(\bS^*\gA G)^+)$. \\ Now the morphism $q:M_{n+p}( \Sigma^W(G)^+)\to M_{n+p}(C_0(\bS^*\gA G)^+)$ is open and therefore the image of the connected component $GL_{n+p}(\Sigma^W(G)^+)_{(0)}$ of $1_{n+p}$ in $GL_{n+p}(\Sigma^W(G)^+)$ is an open (and therefore also closed) subgroup of $GL_{n+p}(C_0(\bS^*\gA G)^+)$. It follows immediately that $q\Big(GL_{n+p}(\Sigma^W(G)^+)_{(0)}\Big)=GL_{n+p}(C_0(\bS^*\gA G)^+)_{(0)}$. Finally $(\sigma\oplus 1_p)x^{-1}$ is in the image of $GL_{n+p}(\Sigma^W(G)^+)_{(0)}$, therefore $\sigma \oplus 1_p$ can be lifted to an  invertible element of $M_n(\Sigma^W(G)^+)$.

Let us make a few comments:

\begin{enumerate}
\item Considering the diagram
$$\xymatrix{
0\ar[r] &C^*(G_{F})\ar[r]\ar@{=}[d]& \Sigma^W(G)\ar[r]\ar[d] &C_0(\bS^*\gA G)\ar[d]\ar[r]& 0\\
0\ar[r] &C^*(G_{F})\ar[r]& \Psi^*(G_F)\ar[r] &C_0(\bS^*\gA G_F)\ar[r]& 0
}
$$ 
we find that the image of $\sigma$ in $K_0(C^*(G_{F}))$ is the index $\ind (\sigma_F)$ of the restriction $\sigma_F$ of $\sigma$ to $F$.

\item Considering the diagram
$$\xymatrix{
&0\ar[d]&0\ar[d]&0\ar[d]\\
0\ar[r] &C^*(G_W)\ar[r]\ar[d]& \Psi^*(G_W)\ar[r]\ar[d] &C_0(\bS^*\gA G_W)\ar[r]\ar[d]& 0\\
0\ar[r] &C^*(G)\ar[r]\ar[d] & \Psi^*(G)\ar[r]\ar[d] &C_0(\bS^*\gA G)\ar[d]\ar[r]& 0\\
0\ar[r] &C^*(G_F)\ar[r]\ar[d]& \Psi^*(G_F)\ar[r]\ar[d] &C_0(\bS^*\gA G_F)\ar[r]\ar[d]& 0\\
&0&0&0}
$$ 
we could also say that our question is: when is the index $\ind(\sigma)\in K_0(C^*(G))$ in the image of $K_0(C^*(G_W))$, and of course this happens if and only if the image of $\ind(\sigma)$ in $K_0(C^*(G_F))$ vanishes. Again we may notice that the image of $\ind(\sigma)$ in $K_0(C^*(G_F))$ is $\ind (\sigma_F)$.

\item Of course, the same remark holds if we start with a symbol defining a class in $K_0(C_0(\bS^*\gA G))$.
\end{enumerate}

\subsubsection{Relative K-theory and full index}

It is actually better to consider the index map in a relative $K$-theory setting. Indeed, the starting point of the index problem is a pair of bundles $E_{\pm}$ over $M$ together with a pseudodifferential operator $P$ from sections of $E_+$ to sections of $E_-$ which is invertible modulo $C^*(G_W)$. Consider the morphism $\psi:C_0(M)\to \Psi^*(G)$ which associates to a (smooth) function $f$ the order $0$ (pseudo)differential operator multiplication by $f$ and $\sigma_{full}:\Psi^*(G)\to \Sigma^{W}(G)$ the full symbol map. 

\smallskip 
Put $\mu=\sigma_{full}\circ \psi$.

By definition, for any $P\in\Psi^*(G)$,  the triple $(E_{\pm},\sigma_{full}(P))$ is an element in the relative $K$-theory of the morphism $\mu$. The index $\cdot\otimes \widetilde{\ind}_{full}^W(G)$ considered in the previous section is the composition of the morphism $K_1(\Sigma^{W}(G))\to K_0(\mu)$ 
\footnote{Recall that if $f:A\to B$ is a morphism of $C^*$-algebras, we have a natural morphism $u:K_{*+1}(B)\to K_*(f)$ corresponding to the inclusion of the suspension of $B$ in the cone of $f$.}  with the index map $\ind_{rel}:K_0(\mu)\to K_0(C^*(G_W))$ which to $(E_{\pm},\sigma_{full}(P))$ associates the class of $P$.

The morphism $\ind_{rel}$ can be thought of as the composition of the obvious morphism $K_0(\mu)\to K_0(\sigma_{full})\simeq K_0(\ker(\sigma_{full}))=K_0(C^*(G_W))$.

\bigskip Let us now compute the group $K_*(\mu)$ and the morphism $\ind_{rel}$ when the index element $\ind_{G_F}\in KK(C_0((\gA ^*G)_{|F}),C^*(G_F))$ is invertible.

\begin{proposition}\label{indrel}
Assume that the index element $\ind_{G_F}\in KK(C_0((\gA ^*G)_{|F}),C^*(G_F))$ is invertible. Then $K_*(\mu)$ is naturally isomorphic to $K_*(C_0(\gA^* G_W))$. Under this isomorphism, $\ind_{rel}$ identifies with  $\ind_{G_W}$.

\begin{proof}
We have a diagram $$\xymatrix{ 
0\ar[r]&C_0(W)\ar[r]\ar[d]^i& C_0(M)\ar[r]\ar[d]^{\mu}&C_0(F)\ar[d]^{j_\Psi}\ar[r]&0\\
0\ar[r]&C_0(\bS^*\gA G_W)\ar[r]&\Sigma^{W}(G)\ar[r]&\Psi^*(G_F)\ar[r]&0
}$$
As $j_\Psi$ is an isomorphism in $K$-theory, the map $K_*(i)\to K_*(\mu)$ induced by the first commutative square of this diagram is an isomorphism. As $K_*(i)=K_*(\Cn_i)$ and $\Cn_i=C_0(\gA^* G_W)$, we obtained the desired isomorphism $K_*(C_0(\gA^* G_W))\simeq K_*(\mu)$.

Comparing the diagrams
$$\xymatrix{ 
C_0(W)\ar[r]\ar[d]^i& C_0(M)\ar[r]\ar[d]^{\mu}&\Psi^*(G)\ar[d]^{\sigma_{full}}\\
C_0(\bS^*\gA G_W)\ar[r]&\Sigma^{W}(G)\ar@{=}[r]&\Sigma^{W}(G)
} \ \ \hbox{and}\ \  \xymatrix{ 
C_0(W)\ar[r]\ar[d]^i& \Psi^*(G_W)\ar[r]\ar[d]^{\sigma_{W}}&\Psi^*(G)\ar[d]^{\sigma_{full}}\\
C_0(\bS^*\gA G_W)\ar@{=}[r]&C_0(\bS^*\gA G_W)\ar[r]&\Sigma^{W}(G)
}$$
we find that the composition $K_*(i)\overset{\sim}{\longrightarrow} K_*(\mu)\longrightarrow K_*(\sigma_{full})$ coincides with the index \\ $K_*(i){\longrightarrow} K_*(\sigma_W)\overset{\sim}{\longrightarrow} K_*(\sigma_{full})$.
\end{proof}
\end{proposition}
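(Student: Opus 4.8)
The plan is to exhibit the required isomorphism $K_*(\mu)\simeq K_*(C_0(\gA^*G_W))$ by comparing the morphism $\mu:C_0(M)\to\Sigma^W(G)$ with the much simpler morphism $i:C_0(W)\to C_0(\bS^*\gA G_W)$ (the classical symbol map for the groupoid $\gA G_W$, or equivalently the inclusion of $W$ as the zero-section bundle of $\gA G_W$), and then to recognize that the mapping cone $\Cn_i$ is canonically $C_0(\gA^*G_W)$, which gives $K_*(i)=K_*(\Cn_i)=K_*(C_0(\gA^*G_W))$. The key geometric input making this work is the hypothesis that $\ind_{G_F}$ is invertible, which, via Lemma \ref{lemme3.5}.a), tells us that the inclusion $j_\Psi:C_0(F)\to\Psi^*(G_F)$ is a $KK$-equivalence — in particular an isomorphism on $K$-theory.

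First I would write down the commutative diagram of semi-split extensions
$$\xymatrix{
0\ar[r]&C_0(W)\ar[r]\ar[d]^i& C_0(M)\ar[r]\ar[d]^{\mu}&C_0(F)\ar[d]^{j_\Psi}\ar[r]&0\\
0\ar[r]&C_0(\bS^*\gA G_W)\ar[r]&\Sigma^{W}(G)\ar[r]&\Psi^*(G_F)\ar[r]&0
}$$
whose top row is the obvious restriction sequence for $M=W\sqcup F$ and whose bottom row is the restriction of the full symbol sequence to the boundary (its kernel is $C_0(\bS^*\gA G_W)$ because $\Sigma^W(G)=\Psi^*(G)/C^*(G_W)$ and the commutative symbol of $G_W$ lands in $C_0(\bS^*\gA G_W)$). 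The left-hand square of this diagram induces a map of mapping-cone $K$-theory long exact sequences, hence a map $K_*(i)\to K_*(\mu)$; since the induced map on the quotients is $K_*(j_\Psi)$, which is an isomorphism by Lemma \ref{lemme3.5}.a), the five lemma (applied to the two six-term sequences $\cdots\to K_*(C_0(W))\to K_*(C_0(M))\to K_*(\mu)\to\cdots$ and the analogous one for $i$) gives that $K_*(i)\to K_*(\mu)$ is an isomorphism. Then I would identify $\Cn_i$: $i:C_0(W)\to C_0(\bS^*\gA G_W)$ is the commutative-symbol inclusion, so up to homotopy its cone is $C_0(\ronde B^*\gA G_W)$, which deformation-retracts onto $C_0(\gA^*G_W)$ (equivalently one uses the exact sequence $0\to C_0(\gA^*G_W)\to C_0(B^*\gA G_W)\to C_0(\bS^*\gA G_W)\to 0$ with $C_0(B^*\gA G_W)\simeq C_0(W)$ contractible over $W$). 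This yields the stated isomorphism $K_*(C_0(\gA^*G_W))\simeq K_*(\mu)$.

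For the second assertion — that under this isomorphism $\ind_{rel}$ becomes $\ind_{G_W}$ — I would compare two further diagrams, exactly as sketched in the statement. On one hand, the pair of commuting squares
$$\xymatrix{
C_0(W)\ar[r]\ar[d]^i& C_0(M)\ar[r]\ar[d]^{\mu}&\Psi^*(G)\ar[d]^{\sigma_{full}}\\
C_0(\bS^*\gA G_W)\ar[r]&\Sigma^{W}(G)\ar@{=}[r]&\Sigma^{W}(G)
}$$
shows that the composite $K_*(i)\xrightarrow{\sim} K_*(\mu)\to K_*(\sigma_{full})$ is the map induced on relative groups by the map of pairs $(C_0(W)\to C_0(\bS^*\gA G_W))\to (\Psi^*(G)\to\Sigma^W(G))$. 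On the other hand, factoring $C_0(W)\to\Psi^*(G)$ through $\Psi^*(G_W)$ gives
$$\xymatrix{
C_0(W)\ar[r]\ar[d]^i& \Psi^*(G_W)\ar[r]\ar[d]^{\sigma_{W}}&\Psi^*(G)\ar[d]^{\sigma_{full}}\\
C_0(\bS^*\gA G_W)\ar@{=}[r]&C_0(\bS^*\gA G_W)\ar[r]&\Sigma^{W}(G)
}$$
so the same composite equals $K_*(i)\to K_*(\sigma_W)\xrightarrow{\sim} K_*(\sigma_{full})$, where $K_*(\sigma_W)\simeq K_*(\ker\sigma_W)=K_*(C^*(G_W))$. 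Tracing through, $K_*(i)\to K_*(\sigma_W)$ is precisely the map sending the class of a symbol to the class of its quantization modulo $C^*(G_W)$ — i.e. after the identification $K_*(i)=K_*(C_0(\gA^*G_W))$ it is $\cdot\otimes\ind_{G_W}$ (this is the standard description of the index of $G_W$ via the pseudodifferential exact sequence, $\wind_{G_W}=\ind_{G_W}\otimes q_{\gA^*G_W}$, recalled in Section 2). Since $\ind_{rel}$ is by definition the composite $K_*(\mu)\to K_*(\sigma_{full})\simeq K_*(C^*(G_W))$, this identifies $\ind_{rel}$ with $\ind_{G_W}$, completing the proof.

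The main obstacle I anticipate is bookkeeping rather than conceptual: one must be careful that all the extensions in sight are semi-split so that the mapping-cone/relative-$K$-theory six-term sequences are available and natural, and that the identification $K_*(\sigma_W)\simeq K_*(C^*(G_W))$ is the one that genuinely implements $\ind_{G_W}$ (as opposed to differing by a sign or a Bott shift). Keeping the commutative-symbol bundle $\bS^*\gA G_W$ versus the closed-ball bundle $B^*\gA G_W$ straight, and checking that the cone of $i$ really is $C_0(\gA^*G_W)$ rather than a suspension thereof, is where the only real care is needed — but all of this is exactly the content of the pseudodifferential exact sequence discussion of Section 2 together with Lemma \ref{lemme3.5}, so no new machinery is required.
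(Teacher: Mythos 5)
Your proposal is correct and follows essentially the same route as the paper's proof: the same comparison diagram of extensions with $j_\Psi$ a $K$-equivalence by Lemma \ref{lemme3.5}, the same identification $\Cn_i\simeq C_0(\gA^*G_W)$, and the same pair of diagrams factoring through $\Psi^*(G_W)$ to identify $\ind_{rel}$ with $\ind_{G_W}$. The only difference is that you spell out the five-lemma step and the homotopy identifying the cone of $i$, which the paper leaves implicit.
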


\begin{remark}
We wrote the relative index map in terms of morphisms of $K$-groups. One can also write everything in terms $KK$-theory, by replacing relative $K$-theory by mapping cones, \ie construct the relative index as the element of  $KK(\Cn_{\mu},C^*(G_W))$ given as $\psi_{\Cn}^*([e]^{-1})$ where $e:C^*(G_W)\to \Cn_{\sigma_{full}}$ is the ($KK$-invertible) ``excision map'' associated with the (semi-split) exact sequence $0\to C^*(G_W)\to \Psi^*(G)\overset{\sigma_{full}}\longrightarrow \Sigma_{F}(G)\to 0$ and $\psi_{\Cn}:\Cn_{\mu}\to \Cn_{\sigma_{full}}$ is the morphism associated with $\psi$.
\end{remark}

\subsection{Connes-Thom elements and quotient of a groupoid by $\R_+^*$}\label{saison}

\subsubsection{Proper action on a manifold}

\begin{remark}[Connes-Thom elements]\label{ConnesThom}
Let $\R_+^*$ act smoothly (freely and) properly on a manifold $M$. We have a canonical invertible $KK$-element $\alpha=(H,D)\in KK^1(C_0(M),C_0(M/\R_+^*))$. \begin{itemize}
\item The Hilbert module $H$ is obtained as a completion of $C_c(M)$ with respect to the $C_0(M/\R_+^*)$ valued inner product $\langle \xi|\eta\rangle(p(x))=\int_0^{+\infty}\overline {\xi(t.x)}\,\eta(t.x)\,\frac{dt}t$ for $\xi,\eta \in C_c(M)$, where $p:M\to M/\R_+^*$ is the quotient map. 
\item The operator $D$ is $\frac1t\frac{\partial }{\partial t}$.
\end{itemize}

The inverse element $\beta\in KK^1(C_0(M/\R_+^*),C_0(M))$ is constructed in the following way: $C_0(M/\R_+^*)$ sits in the multipliers of $C_0(M)$. One may define a continuous function $f:M\to [-1,1]$ such that, uniformly on compact sets of $M$, $\lim_{t\to\pm\infty}f(e^t.x)=\pm1$. The pair $(C_0(M),f)$ is then an element in $KK^1(C_0(M/\R_+^*),C_0(M))$. To construct $f$, one may note that, by properness, we actually have a section $\varphi:M/\R_+^* \to M$ and we may thus construct a homeomorphism $\R_+^*\times M/\R_+^*\to M$ defined by $(t,x)\mapsto t.\varphi(x)$. Then put $f(e^t,x)=t(1+t^2)^{-1/2}$.

As an extension of $C^*$-algebras the element $\beta $ is given by considering  $P=(M\times \R_+)/\R_+^*$ (where $\R_+^*$ acts -properly - diagonally). Then $M$ sits as an open subset $(M\times \R_+^*)/\R_+^*$ and we have an exact sequence $0\to C_0(M)\to C_0(P)\to C_0(M/\R_+^*)\to 0$.

\subsubsection{Proper action on a groupoid}
Let now $\R_+^*$ act smoothly (locally \cf remark \ref{RemarqueToutePropre}) properly on a Lie groupoid $G\rightrightarrows M$. The groupoid $G/\R_+^*$ acts on $M$, and the element $\alpha$ is $G$ invariant - and $\beta $ is almost $G$ invariant in the sense of \cite{Legall}. In other words, we obtain elements $\alpha\in KK^1_{G/\R_+^*}(C_0(M),C_0(M/\R_+^*))$ and $\beta\in KK_{G/\R_+^*}^1(C_0(M/\R_+^*),C_0(M))$ which are inverses of each other in Le Gall's equivariant $KK$-theory for groupoids. 

Using the descent morphism of Kasparov (\cite{Kasparov1988}) and Le Gall (\cite{Legall}), we obtain elements $j_{G/\R_+^*}(\alpha)\in KK^1(C^*(G),C^*(G/\R_+^*))$ and $j_{G/\R_+^*}(\beta)\in KK^1(C^*(G/\R_+^*),C^*(G))$ that are also inverses of each other. 

Note also that the element $\beta_G=j_{G/\R_+^*}(\beta) $ is the connecting element of the extension of groupoid $C^*$-algebras $0\to C^*(G)\longrightarrow C^*(\G)\overset{ev_0}{\longrightarrow} C^*(G/\R_+^*)\to 0,$ where $\G=(G\times \R_+)/\R_+^*$  and $ev_0$ comes from the evaluation at \(G\times \{0\}\). Using the pseudodifferential operators on the groupoid $\G$, we obtain a $KK$-element $\beta_G^\Psi\in KK^1(\Psi^*(G/\R_+^*),\Psi^*(G))$. We obtain a commutative diagram
$$\xymatrix{&0\ar[d]&0\ar[d]&0\ar[d]&\\
0\ar[r]&C^*(G)\ar[r]\ar[d]&C^*(\G)\ar[r]\ar[d]&C^*(G/\R_+^*)\ar[r]\ar[d]&0\\
0\ar[r]&\Psi^*(G)\ar[r]\ar[d]&\Psi^*(\G)\ar[r]\ar[d]&\Psi^*(G/\R_+^*)\ar[r]\ar[d]&0\\
0\ar[r]&C_0(\bS^* \gA G)\ar[r]\ar[d]&C_0(\bS^* \gA \G)\ar[r]\ar[d]&C_0(\bS^* \gA (G/\R_+^*))\ar[r]\ar[d]&0\\
&0&0&0&
}$$
The third horizontal  exact sequence corresponds to the proper action of $\R_+^*$ on \(\bS^*\gA G\). In fact $\bS^* \gA  \G$ is homeomorphic (using a cross section) to $(\bS^*\gA (G/\R_+^*))\times \R_+$.
As the connecting elements of the  first and third horizontal (semi-split) exact sequences are invertible, it follows that $C^*(\G)$ and $C_0(\bS^* \gA  \G)$ are $K$-contractible, whence so is $\Psi^*(G)$ and therefore $\beta_G^\Psi$ is a $KK$-equivalence. 
Hence we have obtained:
\begin{proposition} If $\R_+^*$ acts smoothly (locally) properly on the Lie groupoid \(G\), the connecting elements $\beta_G \in KK^1(C^*(G/\R_+^*),C^*(G))$,  $\beta_{\bS^*\gA G} \in KK^1(C_0(\bS^*\gA(G/\R_+^*),C_0(\bS^*\gA G))$ and $\beta_G^\Psi\in KK^1(\Psi^*(G/\R_+^*),\Psi^*(G))$ are $KK$-equivalences. 
\end{proposition}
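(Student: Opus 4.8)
The plan is to establish the three $KK$-equivalences in succession, each one feeding into the next via the displayed $3\times 3$ diagram. The starting point is the pair of Connes--Thom elements recalled just above: in Le Gall's $G/\R_+^*$-equivariant theory, $\alpha\in KK^1_{G/\R_+^*}(C_0(M),C_0(M/\R_+^*))$ and $\beta\in KK^1_{G/\R_+^*}(C_0(M/\R_+^*),C_0(M))$ are mutually inverse. Since the Kasparov--Le Gall descent functor $j_{G/\R_+^*}$ respects Kasparov products and sends identities to identities (for either the full or the reduced crossed product), it carries this inverse pair to an inverse pair in ordinary $KK^1$. Under the identification $G=M\rtimes(G/\R_+^*)$ of section~\ref{croise} one has $C_0(M)\rtimes(G/\R_+^*)=C^*(G)$ and $C_0(M/\R_+^*)\rtimes(G/\R_+^*)=C^*(G/\R_+^*)$, and $\beta_G=j_{G/\R_+^*}(\beta)$; hence $\beta_G$ is a $KK$-equivalence, with inverse $j_{G/\R_+^*}(\alpha)$. (If $G$ is not amenable the same argument applies in $E$-theory.)

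For $\beta_{\bS^*\gA G}$ I would first observe that the $\R_+^*$-action on $G$ induces, as recalled in section~\ref{croise}, an action by bundle automorphisms on the algebroid $\gA G$, hence on $\gA^*G$ and on the sphere bundle $\bS^*\gA G$; because the action on the base $M$ is (locally) proper and the fibres of $\bS^*\gA G\to M$ are compact, this action on $\bS^*\gA G$ is again (locally) proper. Then $\beta_{\bS^*\gA G}$ is precisely the Connes--Thom element of Remark~\ref{ConnesThom} attached to this proper action of $\R_+^*$ on the manifold $\bS^*\gA G$ --- equivalently, the connecting element of the bottom row $0\to C_0(\bS^*\gA G)\to C_0(\bS^*\gA\G)\to C_0(\bS^*\gA(G/\R_+^*))\to 0$ of the diagram, where a cross-section gives $\bS^*\gA\G\cong \bS^*\gA(G/\R_+^*)\times\R_+$. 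By Remark~\ref{ConnesThom} it is therefore a $KK$-equivalence.

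For $\beta_G^\Psi$ I would run a diagram chase on the $3\times 3$ diagram whose rows are the pseudodifferential-type extensions of $C^*$, $\Psi^*$ and symbol algebras for $G$, for $\G=(G\times\R_+)/\R_+^*$, and for $G/\R_+^*$, and whose columns are the pseudodifferential exact sequences. The connecting element of the first row is $\beta_G$ and that of the third row is $\beta_{\bS^*\gA G}$, both $KK$-invertible by the previous two steps; and, as noted in the proof of Lemma~\ref{elabet}, a semi-split extension whose connecting element is invertible has $KK$-contractible middle term (its six-term sequence forces $KK(D,\cdot)=0$ for all $D$). Hence $C^*(\G)$ and $C_0(\bS^*\gA\G)$ are $KK$-equivalent to $0$, and then the middle column $0\to C^*(\G)\to\Psi^*(\G)\to C_0(\bS^*\gA\G)\to 0$ shows $\Psi^*(\G)$ is $KK$-equivalent to $0$ as well. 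Finally $\beta_G^\Psi$ is the connecting element of the middle row $0\to\Psi^*(G)\to\Psi^*(\G)\to\Psi^*(G/\R_+^*)\to 0$; a semi-split extension with $KK$-contractible middle term has invertible connecting element, so $\beta_G^\Psi$ is a $KK$-equivalence.

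The genuinely delicate point, on which I would spend the most care, is the geometric input underlying the diagram: one must check that $\R_+^*$ acts (locally) properly on $G\times\R_+$ diagonally (with the usual action on $\R_+$), that the quotient $\G$ is a Lie groupoid over $P=(M\times\R_+)/\R_+^*$ containing $G$ as an open saturated piece and $G/\R_+^*$ as the complementary closed saturated piece, and that its algebroid and pseudodifferential calculus restrict compatibly --- so that the three rows of the diagram are exactly the asserted extensions and the bottom connecting element is exactly $\beta_{\bS^*\gA G}$ via the trivialisation $\bS^*\gA\G\cong\bS^*\gA(G/\R_+^*)\times\R_+$. Once this is in place, everything else is formal: functoriality of descent for the first step, Remark~\ref{ConnesThom} for the second, and Lemma~\ref{elabet} together with the long exact sequence in $KK$ for the third.
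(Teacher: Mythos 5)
Your proposal is correct and follows essentially the same route as the paper: invertibility of $\beta_G$ via descent of the mutually inverse equivariant Connes--Thom pair, invertibility of $\beta_{\bS^*\gA G}$ as the Connes--Thom element of the induced proper action on the manifold $\bS^*\gA G$ (equivalently, contractibility of $C_0(\bS^*\gA\G)\cong C_0(\bS^*\gA(G/\R_+^*)\times\R_+)$), and then the $3\times 3$ diagram forcing $C^*(\G)$, $C_0(\bS^*\gA\G)$ and hence $\Psi^*(\G)$ to be $KK$-contractible, so that the connecting element $\beta_G^\Psi$ of the middle row is invertible. This matches the paper's argument step for step.
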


\subsubsection{Closed saturated subsets and connecting maps}\label{ctoi}
If $W$ is an open saturated subset in $M$ for the actions of $G$ and of $\R_+^*$ and $F=M\setminus W$, one compares the corresponding elements. We then obtain a diagram 
$$\xymatrix{0\ar[r] &C^*(G_{W}^{W}/\R_+^*)\ar[r]\ar@{-}[d]^{\beta'}&C^*(G/\R_+^*)\ar[r]\ar@{-}[d]^{\beta}& C^*(G_F^F/\R_+^*)\ar[r]\ar@{-}[d]^{\beta''}&0\\
0\ar[r] &C^*(G_{W}^{W})\ar[r]&C^*(G)\ar[r]& C^*(G_F^F)\ar[r]&0}$$
where the horizontal arrows are morphisms and the vertical ones $KK^1$-equivalences.

Using the deformation groupoid  $\cG=G^F_F\times [0,1)\cup G\times \{0\}$  which is the restriction of the groupoid \(G\times [0,1)\rightrightarrows M\times [0,1]\) to the closed saturated subset \(F\times [0,1)\cup M\times \{0\}\), we obtain:

\begin{proposition}\label{etK1}
If $G_F^F$ is amenable, $\partial_{G/\R_+^*}^W\otimes \beta'=-\beta''\otimes \partial_G^W\in KK(C^*(G_F^F/\R_+^*),C^*(G_{W}^{W}))$ where $\partial_{G}^W\in KK^1(C^*(G_F^F),C^*(G_{W}^{W}))$ and $\partial_{G/\R_+^*}^W\in KK^1(C^*(G_F^F/\R_+^*),C^*(G_{W}^{W}/\R_+^*))$ denote the $KK$-elements associated with the above exact sequences.
\begin{proof}
Indeed, the connecting map of a semi-split exact sequence $0\to J\to A\overset p\to A/J\to 0$ is obtained as the $KK$-product of the morphism $A/J(0,1)\to \Cn_p$ with the $KK$-inverse of the morphism $J\to \Cn_p$. The $-$ sign comes from the fact that we have naturally elements of $KK(C^*(G_F^F/\R_+^*\times (0,1)^2),C^*(G_{W}^{W}))$ which are equal but with opposite orientations of $(0,1)^2$.
\end{proof}
\end{proposition}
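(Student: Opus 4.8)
The plan is to deduce the identity from two applications of Lemma \ref{elabet} (the naturality of connecting maps for morphisms of semi-split exact sequences), using the deformation groupoid $\cG = G_F^F\times[0,1)\cup G\times\{0\}$ as the intermediary that interpolates between the two rows of the commutative square in \S\ref{ctoi}. First I would record that $\cG$ is a Lie groupoid (restriction of $G\times[0,1)\rightrightarrows M\times[0,1]$ to the closed saturated subset $F\times[0,1)\cup M\times\{0\}$), that it carries a diagonal action of $\R_+^*$ (acting on $G$ as given and trivially on the deformation parameter), and that this action is still locally proper, so all of $\cG$, $\cG/\R_+^*$, and their restrictions to $W$ and $F$ have well-behaved $C^*$-algebras. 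The evaluation maps $\ev_0$ (at the parameter $0$, giving $G$) and $\ev_1$ (at parameter $1$, giving $G_F^F$) are morphisms of the relevant exact sequences; since $C^*(\cG)$ deformation-retracts onto $C^*(G)$ — more precisely the kernel of $\ev_1$, being $C^*(G_F^F\times(0,1))$, is contractible — the class $[\ev_1]$ is $KK$-invertible, and similarly for the restrictions to $W$ and to $F$ (here amenability of $G_F^F$ is used so the sequences are semi-split and the $KK$-machinery applies).

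The key steps are then: (1) form the short exact sequence of groupoid $C^*$-algebras for $\cG$ associated with the open saturated subset $W\times[0,1)$ (or rather $W\times[0,1)\cup\emptyset$ inside the unit space of $\cG$) and its complement, namely $0\to C^*(\cG_W)\to C^*(\cG)\to C^*(\cG_F)\to 0$, together with its $\R_+^*$-quotient version; (2) observe that both $\ev_0$ and $\ev_1$ give vertical morphisms from the $\cG$-sequence down to, respectively, the bottom row ($G$) and the top row ($G_F^F\times$pair-of-points-collapsed, i.e. just $G_F^F$) of the square in \S\ref{ctoi}; (3) apply Lemma \ref{elabet}(a) to each of these comparison diagrams to express $\partial_G^W$ and $\partial_{G/\R_+^*}^W$ in terms of the connecting element $\partial_\cG^{W\times[0,1)}$ and the descent Connes-Thom elements $\beta$, $\beta'$, $\beta''$; (4) also apply Lemma \ref{elabet}(a) to the comparison between the $\cG$-sequence and its $\R_+^*$-quotient, via the equivariant Connes-Thom element $\beta$ of \S\ref{saison} and its restrictions, getting $\partial_{\cG/\R_+^*}^{\,\cdot}\otimes\beta'_\cG = \pm\,\beta''_\cG\otimes\partial_\cG^{\,\cdot}$; (5) combine the resulting equalities, cancelling the $KK$-invertible evaluation classes, to land on $\partial_{G/\R_+^*}^W\otimes\beta' = -\beta''\otimes\partial_G^W$.

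The sign is the delicate bookkeeping point, and as the proof sketch in the excerpt already indicates, it is best handled by the standard description of a connecting map of a semi-split exact sequence $0\to J\to A\xrightarrow{p} A/J\to 0$ as the Kasparov product of the canonical morphism $A/J(0,1)\to \Cn_p$ with the $KK$-inverse of the excision morphism $J\to\Cn_p$. Tracking both constructions through the square, one obtains two elements of $KK\big(C^*(G_F^F/\R_+^*\times(0,1)^2),\,C^*(G_W^W)\big)$ that agree up to the orientation-reversing self-homeomorphism of $(0,1)^2$ swapping the two coordinates (one coordinate being the mapping-cone/suspension direction, the other the deformation parameter $[0,1)\simeq\R_+$ repackaged), which is exactly the source of the minus sign. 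I expect this orientation/sign reconciliation to be the main obstacle: the algebraic identities among $KK$-classes are forced by naturality once the diagrams are set up, but making the $(0,1)^2$ flip argument rigorous — identifying precisely which suspension coordinate comes from the mapping cone of the quotient map and which from the deformation, and checking that the two ways of assembling the product differ exactly by the transposition — requires care. Everything else (amenability-based semi-splitness, contractibility of the parameter-interval ideals, $KK$-invertibility of the evaluations and of $\beta,\beta',\beta''$ from the proposition in \S\ref{saison}) is either already established in the excerpt or a routine application of Lemma \ref{elabet}.
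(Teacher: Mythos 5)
Your closing paragraph is exactly the paper's proof: the lead-in to the proposition introduces the deformation groupoid $\cG=G_F^F\times[0,1)\cup G\times\{0\}$ precisely because its $C^*$-algebra realizes the mapping cone $\Cn_p$ of the restriction $p:C^*(G)\to C^*(G_F^F)$, each connecting map is written as the product of $[\,C^*(G_F^F)(0,1)\to\Cn_p\,]$ with the $KK$-inverse of the excision morphism $C^*(G_W^W)\to\Cn_p$, and both sides of the claimed identity are then compared inside $KK(C^*(G_F^F/\R_+^*\times(0,1)^2),C^*(G_W^W))$, where they differ by the transposition of the two suspension coordinates; that transposition is the minus sign. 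So you have identified the correct mechanism and it coincides with the source.

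Two of your intermediate steps, however, would fail as written. First, $C^*(G_F^F\times(0,1))\cong C^*(G_F^F)\otimes C_0((0,1))$ is a suspension, not a cone, hence not contractible, and there is no evaluation ``at $1$'' on $[0,1)$ in any case (and on a $[0,1]$-version its kernel would be $\Cn_p$ itself, again not contractible); the $KK$-invertible arrow in this picture is the excision morphism $C^*(G_W^W)\to\Cn_p$ -- this is where amenability/semi-splitness enters -- and the contractible algebra is the cone $C^*(G_F^F)\otimes C_0([0,1))$ arising as the quotient $\Cn_p/C^*(G_W^W)$, not the kernel of an evaluation. Second, and more seriously, Lemma \ref{elabet}(a) cannot be invoked in your step (4): its hypothesis is a commutative diagram whose vertical arrows are $*$-homomorphisms, whereas $\beta$, $\beta'$, $\beta''$ are $KK^1$-classes. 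This is exactly why the paper cannot conclude by naturality alone and must instead realize $\beta$ as the connecting map of $0\to C^*(G)\to C^*(\G)\to C^*(G/\R_+^*)\to 0$ with $\G=(G\times\R_+)/\R_+^*$ and compare the two iterated boundary maps of the resulting square of extensions -- the double-suspension/orientation argument you describe at the end. Since that final argument is correct and self-contained, the flawed scaffolding of steps (1)--(5) is not actually needed; but as laid out, those steps do not constitute a valid derivation.
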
 
 
Note also that the same holds for $\Psi^*$ in place of $C^*$.
\end{remark}

\subsubsection{Connes-Thom invariance of the full index}

Let $W$ be as above: an open subset of $M$ saturated for $G$ and invariant under the action of $\R_+^*$. One compares the corresponding $\widetilde \ind_{full}$ elements. Indeed, we have a diagram 
$$\xymatrix{&0\ar[r] &C^*(G_{W}^{W}/\R_+^*)\ar[r]\ar@{-}[d]^{\beta^{G_W}}&\Psi^*(G/\R_+^*)\ar[r]\ar@{-}[d]^{\beta_\Psi^G}& \Sigma^{W/\R_+^*}(G/\R_+^*)\ar[r]\ar@{-}[d]^{\beta_{\Sigma}^{(G,W)}}&0\\
E_{\wind_{full}}:& 0\ar[r] &C^*(G_{W}^{W})\ar[r]&\Psi^*(G)\ar[r]& \Sigma^{W}(G)\ar[r]&0}$$
where the horizontal arrows are morphisms and the vertical ones $KK^1$-elements. As $\beta^{G_W}$ and $\beta_\Psi^G$ are invertible, we deduce as in prop. \ref{etK1}: 

\begin{proposition}\label{betasigma}
\begin{enumerate}
\item The element $\beta_{\Sigma}^{(G,W)}$ is invertible. 
\item We have $\beta_{\Sigma}^{(G,W)}\otimes \wind_{full}^W(G)=-\wind_{full}^{W/\R_+^*}(G/\R_+^*)\otimes \beta^{G_W}.$\hfill$\square$

\end{enumerate}
\end{proposition}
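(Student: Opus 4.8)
The plan is to exhibit all three vertical $KK^1$-elements of the square preceding the statement as the connecting elements of the columns of a single commutative $3\times 3$ diagram of semi-split short exact sequences, and then to argue for (1) exactly as the fact established above that $\beta_G$, $\beta_G^\Psi$ and $\beta_{\bS^*\gA G}$ are $KK$-equivalences was argued, and for (2) exactly as in the proof of Proposition \ref{etK1}.

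First I would bring in the Lie groupoid $\G=(G\times\R_+)/\R_+^*$ of Remark \ref{ConnesThom}. Since $W$ is saturated for $G$ and invariant under $\R_+^*$, the set $W'=(W\times\R_+)/\R_+^*$ is an open subset of $\G^{(0)}=(M\times\R_+)/\R_+^*$ saturated for $\G$, and $\G|_{W'}=(G_W^W\times\R_+)/\R_+^*$ has open dense part $G_W^W$ (over $(W\times\R_+^*)/\R_+^*\simeq W$) and boundary $G_W^W/\R_+^*$. Evaluation at the boundary then produces the commutative diagram
\[
\xymatrix@C=8pt@R=14pt{
&0\ar[d]&0\ar[d]&0\ar[d]&\\
0\ar[r]&C^*(G_W^W)\ar[r]\ar[d]&\Psi^*(G)\ar[r]\ar[d]&\Sigma^W(G)\ar[r]\ar[d]&0\\
0\ar[r]&C^*(\G|_{W'})\ar[r]\ar[d]&\Psi^*(\G)\ar[r]\ar[d]&\Sigma^{W'}(\G)\ar[r]\ar[d]&0\\
0\ar[r]&C^*(G_W^W/\R_+^*)\ar[r]\ar[d]&\Psi^*(G/\R_+^*)\ar[r]\ar[d]&\Sigma^{W/\R_+^*}(G/\R_+^*)\ar[r]\ar[d]&0\\
&0&0&0&
}
\]
in which each row is a full-symbol extension $E_{\wind_{full}}$ (of $G$, of $\G$, and of $G/\R_+^*$, the last obtained from $\Sigma^{W'}(\G)=\Psi^*(\G)/C^*(\G|_{W'})$ by the nine lemma, which identifies the lower-right corner) and each column arises from the $\R_+^*$-quotient as in Remark \ref{ConnesThom}, the right-hand one being the quotient of the middle column by the left one. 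The connecting element of the left column is $\beta^{G_W}=\beta_{G_W^W}$, that of the middle column is $\beta_\Psi^G=\beta_G^\Psi$, that of the right column is, by definition, $\beta_\Sigma^{(G,W)}$; the connecting elements of the top and bottom rows are $\wind_{full}^W(G)$ and $\wind_{full}^{W/\R_+^*}(G/\R_+^*)$. All rows are semi-split (pseudodifferential extensions always are), and all columns are semi-split, being evaluation extensions of deformation groupoids.

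For (1) I would use the elementary fact that the connecting element of a semi-split extension $0\to J\to A\overset{q}{\to} B\to 0$ is $KK$-invertible if and only if $A$ is $KK$-contractible: excision gives $J\simeq_{KK}\Cn_q$, while $0\to SB\to\Cn_q\to A\to 0$ is again semi-split, so $A\simeq_{KK}0$ forces $\Cn_q\simeq_{KK}SB$ and hence $\partial$ invertible, and conversely. Since $\beta_\Psi^G$ and $\beta^{G_W}=\beta_{G_W^W}$ are $KK$-equivalences (apply the preceding Connes--Thom proposition to $G$ and to $G_W^W$, which inherits a locally proper $\R_+^*$-action), the algebras $\Psi^*(\G)$ and $C^*(\G|_{W'})$ are $KK$-contractible; the middle row then forces $\Sigma^{W'}(\G)$ to be $KK$-contractible, and the criterion applied a last time to the right column shows $\beta_\Sigma^{(G,W)}$ is a $KK$-equivalence. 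For (2) I would invoke the anticommutativity of the two connecting homomorphisms of a $3\times 3$ diagram of semi-split extensions: both $\beta_\Sigma^{(G,W)}\otimes\wind_{full}^W(G)$ and $\wind_{full}^{W/\R_+^*}(G/\R_+^*)\otimes\beta^{G_W}$ are computed, via the iterated mapping cone of the lower-right corner, by classes in $KK\big(\Sigma^{W/\R_+^*}(G/\R_+^*),C^*(G_W^W)\big)$ carried by $C_0((0,1)^2)$-families that coincide up to the flip of the two $(0,1)$-coordinates, which acts by $-1$ on $KK$; this gives the asserted minus sign, by exactly the mechanism of the proof of Proposition \ref{etK1} (equivalently, two applications of Lemma \ref{elabet}.1 to the sub-squares of the diagram).

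The one place that needs genuine care is the first step: one must check that the evaluation maps truly assemble into a commutative $3\times 3$ diagram with the indicated corners --- in particular that the sum of $\Psi^*(G)$ (the kernel of evaluation at the boundary) with the ideal $C^*(\G|_{W'})$ inside $\Psi^*(\G)$ has quotient $\Sigma^{W/\R_+^*}(G/\R_+^*)$ --- and that the column extensions are semi-split, so that the connecting maps define classes in $KK^1$; if semi-splitness fails for a non-amenable $\Gamma$, the same argument runs in $E$-theory, as elsewhere in the paper. Granting the diagram, both assertions are formal.
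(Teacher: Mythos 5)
Your proposal is correct and follows essentially the same route as the paper: the paper's proof consists precisely of the two-row diagram displayed just before the statement (whose columns are the evaluation extensions coming from $\G=(G\times\R_+)/\R_+^*$, with $\beta^{G_W}$ and $\beta_\Psi^G$ already shown invertible via $KK$-contractibility of $C^*(\G)$ and $\Psi^*(\G)$) together with the remark that one concludes ``as in prop.~\ref{etK1}'', i.e.\ by the anticommutativity of the two connecting maps of the underlying $3\times 3$ diagram. Your unpacking of that diagram, the contractibility argument for $\Sigma^{W'}(\G)$ giving invertibility of $\beta_\Sigma^{(G,W)}$, and the $(0,1)^2$-orientation argument for the sign are exactly what the paper leaves implicit.
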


\section{Two classical geometric constructions: Blowup and deformation to the normal cone}

One of the main object in our study is a Lie groupoid $G$ based on a groupoid restricted to a half space. This corresponds to the inclusion of a hypersurface $V$ of $G^{(0)}$ into $G$ and gives rise to the ``gauge adiabatic groupoid'' $\gag$. The construction of $\gag$ is in fact a particular case of the blowup construction corresponding to the inclusion of a Lie subgroupoid into a groupoid. In this section, we will explain this general construction. We will give a more detailed description in the case of an inclusion $V\to G$ when $V$ is a submanifold of $G^{(0)}$.

Let $Y$ be a manifold and $X$ a locally closed submanifold (the same constructions hold if we are given an injective immersion $X\to Y$). Denote by $N_X^Y$ the (total space) of the normal bundle of $X$ in $Y$. 

\subsection{Deformation to the normal cone}\label{sec:DNC}

%%%%% Pour l'index
\nomenclature[G, 10]{$\DNC(Y,X)$}{The deformation to the normal cone of the inclusion of a submanifold $X$ in a manifold $Y$, $\DNC(Y,X)=Y\times \R^* \cup N_X^Y$}
%%%%%

The deformation to the normal cone $\DNC(Y,X)$ is obtained by gluing $N_X^Y\times \{0\}$ with $Y\times \R^*$. The smooth structure of $\DNC(Y,X)$ is described by use of any exponential map $\theta:U'\to U$ which is a diffeomorphism from an open neighborhood $U'$ of the $0$-section in $N_X^Y$ to an open neighborhood $U$ of $X$. The map $\theta $ is required to satisfy $\theta (x,0)=x$ for all $x\in X$ and $p_x\circ d\theta_x=p'_x$ where $p_x:T_xY\to (N_X^Y)_x=(T_xY)/(T_xX)$ and $p'_x:T_xN_X^Y\simeq (N_X^Y)_x\oplus (T_xX) \to (N_X^Y)_x$ are the projections. The manifold structure of $\DNC(Y,X)$ is described by the requirement that:\begin{enumerate}
\item the inclusion $Y\times \R^*\to \DNC(Y,X)$ and
\item the map $\Theta:\Omega'=\{((x,\xi),\lambda)\in N_X^Y\times \R;\ (x,\lambda\xi)\in U'\}\to \DNC(Y,X)$ defined by $\Theta((x,\xi),0)=((x,\xi),0)$ and  $\Theta((x,\xi),\lambda)=(\theta(x,\lambda \xi),\lambda)\in Y\times \R^*$ if $\lambda \ne 0$.
\end{enumerate}
are diffeomorphisms onto open subsets of $\DNC(Y,X)$.

It is easily shown that $\DNC(Y,X)$ has indeed a smooth structure satisfying these requirements and that this smooth structure does not depend on the choice of $\theta$. (See for example \cite{PCR} for a detailed description of this structure).

In other words, $\DNC(Y,X)$ is obtained by gluing $Y\times \R^*$ with $\Omega'$ by means of the diffeomorphism $\Theta:\Omega'\cap (N_X^Y\times \R^*)\to U\times \R^*$.

Let us recall the following facts which are essential in our construction.

\begin{definitions}
\begin{description}
\item[The gauge action of $\R^*$.] The group $\R^*$ acts on $\DNC(Y,X)$ by $\lambda.(w,t)=(w,\lambda t)$ and $\lambda.((x,\xi),0)=((x,\lambda^{-1}\xi),0)$ \big(with $\lambda,t\in \R^*$, $w\in Y$, $x\in X$ and $\xi\in (N_X^Y)_x$\big).
\item[Functoriality.] Given a commutative diagram of smooth maps $$\xymatrix{X\ar@{^{(}->}[r]\ar[d]_{f_X}&Y\ar[d]^{f_Y}\\X'\ar@{^{(}->}[r]&Y'}$$ where the horizontal arrows are inclusions of submanifolds, we naturally obtain a smooth map $\DNC(f):\DNC(Y,X)\to \DNC(Y',X')$. This map is defined by $\DNC(f)(y,\lambda)=(f_Y(y),\lambda)$ for $y\in Y$ and $\lambda \in \R_*$ and $\DNC(f)(x,\xi,0)=(f_X(x),f_N(\xi),0)$ for $x\in X$ and $\xi\in (N_X^Y)_x=T_xY/T_xX$ where $f_N:N_x\to (N_{X'}^{Y'})_{f_X(x)}=T_{f_X(x)}Y'/T_{f_X(x)}X'$ is the linear map induced by the differential $(df_Y)_x$ at $x$. This map is of course equivariant with respect to the gauge action of $\R^*$. 
\end{description}
\end{definitions}

\begin{remarks}
Let us make a few remarks concerning the DNC construction.\label{atcha}
\begin{enumerate}
\item The map equal to identity on \(X\times \R^*\) and sending \(X\times \{0\}\) to the zero section of \(N_X^Y\) leads to an embedding of \(X\times \R\) into $\DNC(Y,X)$, we may often identify  \(X\times \R\) with its image in $\DNC(Y,X)$. As $\DNC(X,X)=X\times \R$, this corresponds to the naturality of the diagram $$\xymatrix{X\ar@{^{(}->}[r]\ar[d]_{}&X\ar[d]^{}\\X\ar@{^{(}->}[r]&Y}$$

\item  We have a natural smooth map $\pi:\DNC(Y,X)\to Y\times \R$ defined by $\pi(y,\lambda)=(y,\lambda)$ (for $y\in Y$ and $\lambda\in \R^*$) and $\pi((x,\xi),0)=(x,0)$ (for $x\in X\subset Y$ and $\xi\in (N_X^Y)_x$ a normal vector).\label{Claireetc'estGianniquil'adit} This corresponds to the naturality of the diagram $$\xymatrix{X\ar@{^{(}->}[r]\ar[d]_{}&Y\ar[d]^{}\\Y\ar@{^{(}->}[r]&Y}$$

\item If $Y_1$ is an open subset of $Y_2$ such that $X\subset Y_1$, then $\DNC(Y_1,X)$ is an open subset of $\DNC(Y_2,X)$ and $\DNC(Y_2,X)$ is the union of the open subsets $\DNC(Y_1,X)$ and $Y_2\times \R^*$. This reduces to the case when $Y_1$ is a tubular neighborhood - and therefore to the case where $Y$ is (diffeomorphic to) the total space of a real vector bundle over $X$.  In that case one gets \(\DNC(Y,X)=Y\times \R\) and the gauge action of $\R^*$ on \(\DNC(Y,X)=Y\times \R\) is given by $\lambda.((x,\xi),t)= ((x,\lambda^{-1}\xi), \lambda t)$ (with $\lambda\in \R^*$, \(t\in \R\), $x\in X$ and $\xi\in Y_x$).

\item \label{zebut}More generally, let $E$ be (the total space of) a real vector bundle over $Y$. Then $\DNC(E,X)$ identifies with the total space of the pull back vector bundle $\hat\pi^*(E)$ over $\DNC(Y,X)$, where $\hat\pi$ is the composition of $\pi:\DNC(Y,X)\to Y\times \R$ (remark \ref{Claireetc'estGianniquil'adit}) with the projection $Y\times \R\to Y$. The gauge action of $\R^*$ is $\lambda.(w,\xi)= (\lambda .w,\lambda^{-1}\xi)$ for $w\in \DNC(Y,X)$ and $\xi\in E_{\hat\pi(w)}$.

\item \label{fegor}
Let $X_1$ be a (locally closed) smooth submanifold of a smooth manifold $Y_1$ and let $f:Y_2\to Y_1$ be a smooth map transverse to $X_1$. Put $X_2=f^{-1}(X_1)$. Then the normal bundle $N_{X_2}^{Y_2}$ identifies with the pull back of $N_{X_1}^{Y_1}$ by the restriction $X_2\to X_1$ of $f$. It follows that $\DNC(Y_2,X_2)$ identifies with the fibered product $\DNC(Y_1,X_1)\times_{Y_1}Y_2$. 

\item \label{fegor2} More generally, let $Y,Y_1,Y_2$ be smooth manifolds and $f_i:Y_i\to Y$ be smooth maps. Assume that $f_1$ is transverse to $f_2$. Let $X\subset Y$ and $X_i\subset Y_i$ be (locally closed) smooth submanifolds. Assume that $f_i(X_i)\subset X$ and that the restrictions $g_i:X_i\to X$ of $f_i$ are transverse also. We thus have a diagram 
$$\xymatrix{X_1 \ar@{^{(}->}[d]\ar[r]^{g_1}&X\ar@{^{(}->}[d]&X_2\ar@{^{(}->}[d]\ar[l]_{g_2}\\  Y_1\ar[r]^{f_1}&Y&Y_2\ar[l]_{f_2}}$$
Then the maps $\DNC(f_i):\DNC(Y_i,X_i)\to \DNC(Y,X)$ are transverse and the deformation to the normal cone of fibered products $\DNC(Y_1\times_YY_2,X_1\times_XX_2)$ identifies with the fibered product $\DNC(Y_1,X_1)\times_{\DNC(Y,X)}\DNC(Y_2,X_2)$. 
\end{enumerate}
\end{remarks}

\subsection{Blowup constructions}

%%%%% Pour l'index
\nomenclature[G, 13]{$\Blup(Y,X)$}{The blowup of the inclusion of a submanifold $X$ in a manifold $Y$, $\Blup(Y,X)=Y\setminus X \cup \bP(N_X^Y)$}
\nomenclature[G, 14]{$\SBlup(Y,X)$}{The spherical blowup of the inclusion of a submanifold $X$ in a manifold $Y$, $\SBlup(Y,X)=Y \setminus X \cup \bS(N_X^Y)$}
%%%%%

The blowup $\Blup(Y,X)$ is a smooth manifold which is a union of $Y\setminus X$ with the (total space) $\bP(N_X^Y)$ of the projective space of the normal bundle $N_X^Y$ of $X$ in $Y$. We will also use the ``spherical version'' $\SBlup(Y,X)$ of $\Blup(Y,X)$ which is a manifold with boundary obtained by gluing $Y\setminus X$ with the (total space of the)  sphere bundle $\bS(N_X^Y)$. We have an obvious smooth onto map $\SBlup(Y,X)\to \Blup(Y,X)$ with fibers $1$ or $2$ points. These spaces are of course similar and we will often give details in our constructions to the one of them which is the most convenient for our purposes.

\medskip 
We may view $\Blup(Y,X)$ as the quotient space of a submanifold of the deformation to the normal cone $\DNC(Y,X)$ under the gauge action of $\R^*$.

Recall that the group $\R^*$ acts on $\DNC(Y,X)$ by $\lambda.(w,t)=(w,\lambda t)$ and $\lambda.((x,\xi),0)=((x,\lambda^{-1}\xi),0)$ (with $\lambda,t\in \R^*$, $w\in Y$, $x\in X$ and $\xi\in (N_X^Y)_x$). This action is easily seen to be free and (locally \cf remark \ref{RemarqueToutePropre}) proper on the open subset $\DNC(Y,X)\setminus X\times \R$ (see remark \ref{mondo} below). 

\begin{notation}
For every locally closed subset $T$ of $\R$ containing $0$, we define $\DNC_T(Y,X)=Y\times (T\setminus \{0\})\cup N_X^Y\times \{0\}=\pi^{-1}(Y\times T)$ (with the notation of remark \ref{atcha}.\ref{Claireetc'estGianniquil'adit}). It is the restriction of $\DNC(Y,X)$ to $T$. We put  $\DNC_+(Y,X)=\DNC_{\R_+}(Y,X)=Y\times \R_+^*\cup N_X^Y\times \{0\}$.
\end{notation}

%%%%% Pour l'index
\nomenclature[G, 11]{$\DNC_T(Y,X)$}{The restriction $Y\times (T\setminus \{0\})\cup N_X^Y\times \{0\}$ of $\DNC(Y,X)$ to a  closed subset $T$ of $\R$ containing $0$}
\nomenclature[G, 12]{$\DNC_+(Y,X)$}{The restriction $\DNC_{\R_+}(Y,X)$}
%%%%%

\begin{definition}
We put $$\Blup(Y,X)=\Big(\DNC(Y,X)\setminus X\times \R\Big)/\R^*$$
and $$\SBlup(Y,X)=\Big(\DNC_+(Y,X)\setminus X\times \R_+\Big)/\R_+^*.$$
\end{definition}

\begin{remark}\label{mondo}
With the notation of section \ref{sec:DNC}, $\Blup(Y,X)$ is thus obtained by gluing $Y\setminus X=((Y\setminus X)\times \R^*)/\R^*$, with $(\Omega'\setminus (X\times \R))/\R^*$ using the map $\Theta$ which is equivariant with respect to the gauge action of $\R^*$.

Choose a euclidean metric on $N_X^Y$. Let $\bS=\{((x,\xi),\lambda)\in \Omega';\ \|\xi\|=1\}$. The map $\Theta$ induces a diffeomorphism of $\bS/\tau$ with an open neighborhood $\widetilde \Omega$ of $\bP(N_X^Y)$ in $\Blup(Y,X)$ and $\tau $ is the map $((x,\xi),\lambda)\mapsto ((x,-\xi),-\lambda)$.

In this way, with a Riemannian metric on $Y$, we may naturally associate a Riemannian metric on $\Blup(Y,X)$ (using a partition of the identity to glue the metric of $Y\setminus X$ with that of $\widetilde \Omega$).
\end{remark}

Since $\hat\pi:\DNC(Y,X)\to Y$ is invariant by the gauge action of $\R^*$, we obtain a natural smooth map $\tilde \pi:\Blup(Y,X)\to Y$ whose restriction to $Y\setminus X$  is the identity  and  whose restriction to $\bP(N_X^Y)$ is the canonical projection $\bP(N_X^Y)\to X\subset Y$. This map is easily seen to be  proper. %\violet{Attention: $\vartheta$ is already used in theorem \ref{theorem:connect}. Find another letter...}

\begin{remark} \label{Cdedans} Note that, according to remark \ref{atcha}.\ref{zebut}), $\DNC(Y,X)$ canonically identifies with the open subset $\Blup(Y\times \R,X\times \{0\})\setminus \Blup(Y\times \{0\},X\times \{0\})$ of $\Blup(Y\times \R,X\times \{0\})$. Thus, one may think at $\Blup(Y\times \R,X\times \{0\})$ as a ``local compactification'' of $\DNC(Y,X)$ (since the map $\Blup(Y\times \R,X\times \{0\})\to Y\times \R$ is proper).
\end{remark}

\begin{example} In the case where $Y$ is a real vector bundle over $X$, $\Blup(Y,X)$ identifies non canonically with an open submanifold of the bundle of projective spaces $\bP(Y\times \R)$ over $X$. Indeed, in that case $\DNC(Y,X)=Y\times \R$; choose a euclidian structure on the bundle $Y$. Consider the smooth involution $\Phi$ from $(Y\setminus X)\times \R$ onto itself which to $(x,\xi,t)$ associates $(x,\frac{\xi}{\|\xi\|^2},t)$ (for $x\in X,\ \xi\in Y_x,\ t\in \R$). This map transforms the gauge action of $\R^*$ on $\DNC(Y,X)$ into the action of $\R^*$ by dilations on the vector bundle $Y\times \R$ over $X$ and thus defines a diffeomorphism of $\Blup(Y,X)$ into its image which is the open set $\bP(Y\times \R)\setminus X$ where $X$ embeds into $\bP(Y\times \R)$ by mapping $x\in X$ to the line $\{(x,0,t),\ t\in \R\}$.\end{example}

\begin{remark}\label{RemarqueBienPropre}
Since we will apply this construction to morphisms of groupoids that need not be proper, we have to relax properness as in remark \ref{RemarqueToutePropre}: we will say that $f:Y\to X$ is \emph{locally proper} if every point in $X$ has a neighborhood $V$ such that the restriction $f^{-1}(V)\to V$ of $f$ is proper. In particular, if $Y$ is a non Hausdorff manifold and $X$ is a locally closed submanifold of $Y$, then the map $\Blup(Y\times \R,X\times \{0\})\to Y\times \R$ is locally proper
\end{remark}

\subsubsection*{Functoriality}

\begin{definition}[Functoriality] \label{def:nature:blup} Let $$\xymatrix{X\ar@{^{(}->}[r]\ar[d]_{f_X}&Y\ar[d]^{f_Y}\\X'\ar@{^{(}->}[r]&Y'}$$ be a commutative diagram of smooth maps, where the horizontal arrows are inclusions of closed submanifolds. Let  $U_f =\DNC(Y,X)\setminus \DNC(f)^{-1}(X'\times\R)$ be the inverse image by $\DNC(f)$ of the complement in $\DNC(Y',X')$ of the subset $X'\times\R$. We thus obtain a smooth map $\Blup(f):\Blup_f(Y,X)\to \Blup(Y',X')$ where $\Blup_f(Y,X)\subset \Blup(Y,X)$ is the quotient of $U_f$ by the gauge action of $\R^*$.
 \end{definition}

%%%%% Pour l'index
\nomenclature[G, 15]{$\Blup_f(Y,X)$}{The subspace of $\Blup(Y,X)$ on which $\Blup(f):\Blup_f(Y,X)\to \Blup(Y',X')$ can be defined for a smooth map $f:Y\rightarrow Y'$ (with $f(X)\subset X'$)}
%%%%%

In particular,
\begin{enumerate}
\item If $X\subset Y_1$ are (locally) closed submanifolds of a manifold $Y_2$, then $\Blup(Y_1,X)$ is a subma\-nifold of $\Blup(Y_2,X)$.

\item Also, if $Y_1$ is an open subset of $Y_2$ such that $X\subset Y_1$, then $\Blup(Y_1,X)$ is an open subset of $\Blup(Y_2,X)$ and $\Blup(Y_2,X)$ is the union of the open subsets $\Blup(Y_1,X)$ and $Y_2\setminus X$. This reduces to the case when $Y_1$ is a tubular neighborhood.
\end{enumerate}

\subsubsection*{Fibered products}
Let $X_1$ be a (locally closed) smooth submanifold of a smooth manifold $Y_1$ and let $f:Y_2\to Y_1$ be a smooth map transverse to $X_1$. Put $X_2=f^{-1}(X_1)$. Recall from remark \ref{atcha}.\ref{fegor} that in this situation $\DNC(Y_2,X_2)$ identifies with the fibered product $\DNC(Y_1,X_1)\times_{Y_1}Y_2$. Thus $\Blup(Y_2,X_2)$ identifies with the fibered product $\Blup(Y_1,X_1)\times_{Y_1}Y_2$.

\section{Constructions of groupoids}

\subsection{Linear groupoids}\label{section:AlgebraicBlup}

We will encounter groupoids with an extra linear structure which are special cases of \VBGs/ in the sense of Pradines \cite{Pra,Mack}. We will also need to consider the spherical and projective analogues. 

\medskip
Let $E$ be a vector space over a field $\bK$ and let $F$ be a vector sub-space. Let $r,s:E\to F$ be linear retractions of the inclusion $F\to E$.  
\subsubsection{The linear groupoid }

The space $E$ is endowed with a groupoid structure $\cE$ with base $F$.  The range and source maps are $r$ and $s$ and the product is $(x,y)\mapsto (x\cdot y)=x+y-s(x)$ for $(x,y)$ composable, \ie such that $s(x)=r(y)$.  One can easily check: 
\begin{itemize}\item Since \(r\) and \(s\) are linear retractions: $r(x\cdot y)=r(x)$ and $s(x\cdot y)=s(y)$. 
\item If $(x,y,z)$ are composable, then $(x\cdot y)\cdot z=x+y+z-(r+s)(y)=x\cdot (y\cdot z)$. 
\item The inverse of $x$ is $(r+s)(x)-x$.
\end{itemize}

\begin{remarks}\begin{enumerate}\item Note that, given \(E\) and linear retractions \(r\) and \(s\) on \(F\), \(\cE \rightrightarrows F\)  is the only possible linear groupoid structure(\footnote{A linear groupoid is a groupoid $G$ such that $G^{(0)}$ and $G$ are vector spaces and all structure maps (unit, range, source, product) are linear.}) on \(E\) . Indeed, for any \(x\in E\) one must have \(x\cdot s(x)=x\) and \(r(x) \cdot x=x\). By linearity, it follows that for every composable pair \((x,y)=(x,s(x))+(0,y-s(x))\) we have \(x\cdot y=x\cdot s(x)+0\cdot(y-s(x))=x+y-s(x)\).
\item The morphism \(r-s:E/F \rightarrow F\) gives an action of \(E/F\) on \(F\) by addition. The groupoid associated to this action is in fact $\cE $.

\item Given a linear groupoid structure on a vector space $E$, we obtain the ``dual'' linear groupoid  structure  $\cE^*$ on the dual space $E^*$ given by the subspace \(F^\perp =\{\xi \in E^*;\ \xi\vert_F=0\}\) and the two retractions $r^*,s^*:E^*\to F^\perp $ with kernels $(\ker r)^\perp$ and $(\ker s)^\perp$: for \(\xi\in E^*\) and \(x\in E\), \(r^*(\xi)(x)=\xi(x-r(x))\) and similarly \(s^*(\xi)(x)=\xi(x-s(x))\). 
\end{enumerate}\label{UnicLin}
\end{remarks}

\subsubsection{The  projective groupoid}

The multiplicative group $\bK^*$ acts on  $\cE $ by groupoid automorphisms. This action is free on the restriction  $\widetilde{\cE }=\cE  \setminus (\ker r\cup \ker s)$ of the groupoid $\cE $  to the subset $F\setminus \{0\}$ of  $\cE ^{(0)}=F$.

The projective groupoid is the quotient groupoid $\cP E=\widetilde{\cE }/\bK^*$. It is described as follows.

As a set $\cP E=\bP(E)\setminus (\bP(\ker r)\cup \bP(\ker s))$ and \(\cP^{(0)}=\bP(F)\subset \bP(E)$. The source and range maps \(r,s: \cP E\rightarrow \bP(F)\) are those induced by $r,s: E \rightarrow F$. The product of  $x,y\in \cP E$ with $s(x)=r(y)$ is the line $x\cdot y=\{u+v-s(u);\ u\in x,\ v\in y; \ s(u)=r(v)\}\). The inverse of $x\in \cP E$ is $(r+s-id)(x)$.

\begin{remarks} \begin{enumerate} 
 \item When $F$ is just a vector  line, $\cP E$ is a group. Let us describe it:
 
 we have a canonical morphism $h:\cP E\to \bK^*$ defined by $r(u)=h(x)s(u)$ for $u\in x$. The kernel of $h$ is $\bP(\ker (r-s))\setminus \bP(\ker r)$. Note that $F\subset \ker (r-s)$ and therefore $\ker (r-s)\not\subset \ker r$, whence  $\ker r\cap \ker (r-s)$ is a hyperplane in $\ker (r-s)$. The group $\ker h$ is then easily seen to be isomorphic to $\ker (r)\cap \ker (s)$. Indeed, choose a non zero vector \(w\) in \(F\); then the map which assigns to \(u\in \ker (r)\cap \ker (s)\) the line with direction \(w+u\) gives such an isomorphism onto \(\ker h\).
 
Then: \begin{itemize}
\item If $r=s$,  $\cP E$ is isomorphic to the abelian group  \(\ker(r)=\ker(s)\).
\item If $r\ne s$, choose $x$ such that $r$ and $s$ do not coincide on $x$ and let $P$ be the plane $F\oplus x$. The subgroup $\bP(P)\setminus \{\ker r\cap P,\ker s\cap P\}$ of $\cP E$ is isomorphic through $h$ with $\bK^*$. It thus defines a section of $h$. In that case $\cP E$ is the group of dilations $(\ker (r)\cap \ker (s))\rtimes \bK^*$.
\end{itemize}

\item In the general case, let $d\in \bP(F)$. Put $E^d_d=r^{-1}(d)\cap s^{-1}(d)$. \begin{itemize}
\item The stabilizer $(\cP E)_d^d$ is the group $\cP E^d_d=\bP(E^d_d)\setminus (\bP(\ker r)\cup \bP(\ker s))$ described above.

\item The orbit of a line \(d\) is the set of $r(x)$ for $x\in \cP E$ such that $s(x)=d$. It is therefore $\bP(d+r(\ker s))$.

\end{itemize}

\item the following are equivalent:
\begin{enumerate}
\item $(r,s):E\to F\times F$ is onto; 
\item $r(\ker s)=F$;  
\item $(r-s):E/F\to F$ is onto; 
\item the groupoid $\cP E$ has just one orbit.
\end{enumerate}

\item When $r=s$, the groupoid \(\cP E\) is the product of the abelian group $E/F$ by the space $\bP(F)$.

When $r\ne s$, the groupoid $\widetilde{\cE }$ is Morita equivalent to $\cE $  since \(F\setminus\{0\}\) meets all the orbits of  
$\cE $. 

If $\bK$ is a locally compact field and $r\ne s$, the smooth groupoid \(\cP E\) is Morita equivalent to the groupoid crossed-product  $\widetilde{\cE }\rtimes \bK^*$. 

In all cases, when $\bK$ is a locally compact field, \(\cP E\) is amenable.
\end{enumerate}
\end{remarks}

\subsubsection{The spherical groupoid}

If the field is $\R$, we may just take the quotient by $\R_+^*$ instead of $\R^*$. We then obtain similarly the \emph{spherical groupoid} $\cS E=\bS(E)\setminus (\bS(\ker r)\cup \bS(\ker s))$ where \(\cS^{(0)}(E)=\bS(F)\subset \bS(E)$. 

The involutive automorphism \(u \mapsto -u\) of \(E\) leads to a \(\Z/2\Z$ action, by groupoid automorphisms on \(\cS E\). Since this action is free (and proper!), it follows that the quotient groupoid \(\cP E\) and the crossed product groupoid crossed product \(\cS E\rtimes \Z/2\Z\) are Morita equivalent. Thus \(\cS E\) is also amenable.

As for the projective case, if $(r,s):E\to F\times F$ is onto, the groupoid $\cS E$ has just one orbit. The stabilizer of $d\in \bS (F)$ identifies with the group $(\ker r\cup \ker s)\rtimes \R_+^*$, and therefore the groupoid $\cS E$ is Morita equivalent to the group $(\ker r\cup \ker s)\rtimes \R_+^*$.

\subsubsection{Bundle groupoids}\label{tricfion}

We may of course perform the constructions of section \ref{section:AlgebraicBlup} (with say $\bK=\R$) when $E$ is a (real) vector bundle over a space $V$, $F$ is a subbundle and $r,s$ are bundle maps. We obtain respectively vector bundle groupoids, projective bundle groupoids and spherical bundle groupoids: $\cE $,  $(\cP E ,r,s)$  and $(\cS E,r,s)$ which are respectively families of linear, projective  and spherical  groupoids.

\begin{remarks}
\begin{enumerate}
\item A vector bundle groupoid is just given by a bundle morphism $\alpha=(r-s):E/F\to F$. It is isomorphic to the semi direct product $F\rtimes_\alpha E/F$.
\item All the groupoids defined here are amenable, since they are continuous fields of amenable groupoids (\cf \cite[Prop. 5.3.4]{ADR}).
\end{enumerate}
\end{remarks}

The analytic index element $\ind_G\in KK(C_0(\gA^*G),C^*(G))$ of a vector bundle groupoid $G$ is a $KK$-equivalence. 

The groupoid $G$ is a vector bundle $E$ over a locally compact space $X$, $G^{(0)}$ is a vector subbundle $F$ and $G$ is given by a linear bundle map $(r-s):E/F\to F$. 

\begin{proposition}[A Thom-Connes isomorphism]\label{thomconnesthom}
Let $E$ be a vector bundle groupoid. Then $C^*(E)$ is $KK$-equivalent to $C_0(E)$. More precisely, the index $\ind_E:KK(C_0(\gA^*E),C^*(E))$ is invertible.

\begin{proof}
Put $F=E^{(0)}$ and $H=E/F$. Then $H$ acts on $C_0(F)$ and $C^*(E)=C_0(F)\rtimes H$.

We use the equivariant $KK$-theory of Le Gall (\cf \cite{Legall}) $KK_H(A,B)$.

The thom element of the complex bundle $H\oplus H$ defines an invertible element $$t_H\in KK_H(C_0(X),C_0(H\oplus H)).$$ We deduce that, for every pair $A,B$ of $H$ algebras, the morphism $$\tau_{C_0(H)}:KK_H(A,B)\to KK_H(A\otimes _{C_0(X)}C_0(H),B\otimes _{C_0(X)}C_0(H))$$ is an isomorphism. Its inverse is $x\mapsto t_H\otimes \tau_{C_0(H)}(x)\otimes t_H^{-1}$.

Denote by $A_0$ the $C_0(X)$ algebra $A$ endowed with the trivial action of $H$. We have an isomorphism of $H$-algebras $u_A:C_0(H)\otimes _{C(X)} A\simeq C_0(H)\otimes _{C(X)} A_0$. 

It follows  that the restriction map  $KK_H(A,B)$ to $KK_X(A,B)$ (associated to the groupoid morphism $X\to H$) is an isomorphism - compatible of course with the Kasparov product.

Let $v_A\in KK_H(A_0,A)$ be the element whose image in $KK_X(A_0,A)$ is the identity. The descent of $j_H(v_A)\in KK(C_0(H^*)\otimes _{C(X)}A,A\rtimes H)$ is a $KK$-equivalence. The proposition follows by letting $A=C_0(F)$.
\end{proof}
\end{proposition}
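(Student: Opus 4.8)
The plan is to present $C^*(E)$ as a crossed product by a bundle of vector groups and then to run a Thom-isomorphism argument inside Le Gall's groupoid-equivariant $KK$-theory \cite{Legall}, which is exactly what upgrades the Connes--Thom isomorphism to a $C_0(X)$-linear statement. First I would fix notation: write $F=E^{(0)}$ and $H=E/F$, a bundle of vector groups over $X$. By the description in \ref{tricfion}, $E\cong F\rtimes_\alpha H$ with $\alpha=r-s$, so $C^*(E)=C_0(F)\rtimes H$, the groupoid $H\rightrightarrows X$ acting on $C_0(F)$ by fibrewise translation through $\alpha$. On the other side, the algebroid $\gA E=N_F^E$ is the pullback to $F$ of the bundle $H$, so a fibrewise Fourier transform gives $C_0(\gA^*E)=C_0(F\times_X H^*)\cong C_0(F)\otimes_{C_0(X)}C^*(H)=C_0(F)_0\rtimes H$, where $C_0(F)_0$ denotes $C_0(F)$ with the \emph{trivial} $H$-action; note also that $C_0(\gA^*E)\cong C_0(E)$ as $C^*$-algebras (both are $C_0$ of the total space of a real vector bundle over $F$ of the same rank as $H$). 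Hence the assertion reduces to producing a canonical $KK$-equivalence $C_0(F)_0\rtimes H\to C_0(F)\rtimes H$ and recognising it as $\ind_E$.

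The engine of the proof is the claim that the forgetful functor $KK_H(A,B)\to KK_X(A,B)$ is an isomorphism for all $H$-algebras $A,B$. The key geometric input is that the real bundle $H\oplus H$ over $X$ has a canonical complex structure, hence an $H$-equivariant, $KK_H$-invertible Thom class $t_H\in KK_H(C_0(X),C_0(H\oplus H))$ (the Thom isomorphism being equivariant for the bundle of translations). It follows that $A\mapsto A\otimes_{C_0(X)}C_0(H)$ is an autoequivalence of $KK_H$. Combining this with the standard untwisting isomorphism $C_0(H)\otimes_{C_0(X)}A\cong C_0(H)\otimes_{C_0(X)}A_0$ --- which absorbs the $H$-action on $A$ into the translation action on the first leg, $A_0$ being $A$ with the trivial action --- one deduces that restriction $KK_H\to KK_X$ is an equivalence of categories. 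In particular $1\in KK_X(C_0(F),C_0(F))$ has a unique lift $v\in KK_H(C_0(F)_0,C_0(F))$, and $v$ is $KK_H$-invertible because an equivalence of categories carries invertibles to invertibles.

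Finally, applying Kasparov--Le Gall descent \cite{Kasparov1988,Legall} to $v$ produces a $KK$-invertible element $j_H(v)\in KK\big(C_0(F)_0\rtimes H,\,C_0(F)\rtimes H\big)=KK(C_0(\gA^*E),C^*(E))$, which yields $C_0(\gA^*E)\sim_{KK}C^*(E)$, and hence $C_0(E)\sim_{KK}C^*(E)$. It remains to identify $j_H(v)$ with $\ind_E=[ev_0]^{-1}\otimes[ev_1]$; since $[ev_0]$ is always invertible it is enough that $[ev_1]$ be invertible, which I would obtain by tracking the construction along the adiabatic deformation $E_{ad}^{[0,1]}$, whose linear structure lets the same untwisting/Thom argument apply fibrewise and be matched with the evaluation maps. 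I expect the main obstacle to be precisely this last identification, together with keeping the equivariance and $C_0(X)$-linearity bookkeeping of the middle step fully honest: the homological algebra there is routine once the equivariant Thom class and the untwisting isomorphism are in hand, but certifying that the invertible element one produces really is the index element $\ind_E$ --- and not merely \emph{some} $KK$-equivalence --- is where the genuine care lies.
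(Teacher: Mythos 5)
Your proposal is correct and follows essentially the same route as the paper's own proof: realize $C^*(E)=C_0(F)\rtimes H$, use the equivariant Thom class of the complexification $H\oplus H$ together with the untwisting isomorphism $C_0(H)\otimes_{C_0(X)}A\simeq C_0(H)\otimes_{C_0(X)}A_0$ to show that restriction $KK_H\to KK_X$ is an isomorphism, lift the identity to an invertible $v\in KK_H(C_0(F)_0,C_0(F))$, and apply Le Gall descent. The only point where you go beyond the paper is the explicit identification of the resulting $KK$-equivalence with $\ind_E$ via the adiabatic deformation, a step the paper leaves implicit; your sketch of it is sound.
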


\subsubsection{\VBGs}\label{VBgroupoids}

Recall from \cite{Pra, Mack} that a \VBG\ is a groupoid which is a vector bundle over a groupoid $G$. More precisely:

\begin{definition}
Let $\xymatrix{G\ar@<-3pt>[r] \ar@<1pt>[r]^{r_G,s_G}&G^{(0)}}$ be a groupoid. A \VBG\ over $G$ is a vector bundle $p:E\to G$ with a groupoid structure $\xymatrix{E\ar@<-3pt>[r] \ar@<1pt>[r]^{r_E,s_E}&E^{(0)}}$ such that all the groupoid maps are linear vector bundle morphisms.  This means that $E^{(0)}\subset E$ is a vector subbundle of the restriction of $E$ to $G^{(0)}$ and that $r_E,s_E$, $x\mapsto x^{-1}$ and the composition are linear bundle maps. We also assume that the bundle maps $r_E:E\to r_G^*(E^{(0)})$ and  $s_E:E\to s_G^*(E^{(0)})$ are surjective.
\end{definition}

We will come back to {\VBGs} in the appendix.

\subsection{Normal groupoids, deformation groupoids and blowup groupoids} \label{red}

\subsubsection{Definitions}

Let $\Gamma $ be a closed Lie subgroupoid of a Lie groupoid $G$. Using functoriality (\cf Definition \ref{def:nature:blup})  of the \(\DNC\) and \(\Blup\) construction we may construct a normal  and  a blowup groupoid. 
\begin{enumerate} 
\item The normal bundle $N_{\Gamma }^{G}$ carries a Lie groupoid structure with objects $N_{\Gamma ^{(0)}}^{G^{(0)}}$. We denote by $\cN_{\Gamma }^{G}\rightrightarrows N_{\Gamma ^{(0)}}^{G^{(0)}}$ this groupoid. The projection $\cN_{\Gamma }^{G}\to \Gamma $ is a groupoid morphism and it follows that $\cN_{\Gamma }^{G}$ is a \VBG\ over  $\Gamma $.

\item The manifold $\DNC(G,\Gamma )$ is naturally a Lie groupoid (unlike what was asserted in remark 3.19 of \cite{HilsSkMorph}). Its unit space is $\DNC(G^{(0)},\Gamma ^{(0)})$; its source and range maps are $\DNC(s)$ and $\DNC(r)$; the space of composable arrows identifies with $\DNC(G^{(2)},\Gamma ^{(2)})$ and its product with $\DNC(m)$ where $m$ denotes both products  $G^{(2)}\to G$ and is $\Gamma^{(2)}\to \Gamma$.

%%%%% Pour l'index
\nomenclature[G, 17]{$\DNC(G,\Gamma )\rightrightarrows \DNC(G^{(0)}_2,G^{(0)}_1)$}{The deformation groupoid where $\Gamma $ is a closed Lie subgroupoid of a Lie groupoid $G$}
%%%%%%%%%%

\item\label{jaiplusdidee} The subset $\wDNC(G,\Gamma )=U_r\cap U_s$ of $\DNC(G,\Gamma )$ consisting of elements whose image by $\DNC(r)$ and $\DNC(s)$ is not in $G^{(0)}_1\times \R$ is an open subgroupoid of $\DNC(G,\Gamma )$: it is the restriction of $\DNC(G,\Gamma )$ to the open subspace $\DNC(G^{(0)},G^{(0)}_1)\setminus G^{(0)}_1\times \R$.

\item The group $\R^*$ acts on $\DNC(G,\Gamma )$ via the gauge action by groupoid morphisms. Its action on $\wDNC(G,\Gamma )$ is (locally) proper. Therefore the open subset $\Blup_{r,s}(G,\Gamma )=\wDNC(G,\Gamma )/\R^*$ of $\Blup(G,\Gamma )$  inherits a groupoid structure as well: its space of units is $\Blup(G^{(0)}_2,G^{(0)}_1)$; its source and range maps are \(\Blup(s)\) and \(\Blup(r)\) and the product is $\Blup(m)$.

%%%%% Pour l'index
\nomenclature[G, 18]{$\Blup_{r,s}(G,\Gamma )\rightrightarrows \Blup(G^{(0)},\Gamma ^{(0)})$}{The blowup groupoid $\Blup_{r}(G,\Gamma ) \cap \Blup_{s}(G,\Gamma )$ where $\Gamma $ is a closed Lie subgroupoid of a Lie groupoid $G$}
%%%%%%%%%%

\item 
In the same way, we define the groupoid $\SBlup_{r,s}(G,\Gamma )$. It is the quotient of the restriction $\widetilde \DNC_+(G,\Gamma )$ of $\widetilde \DNC(G,\Gamma )$ to $\R_+$ by the action of $\R_+^*$. Similarly $\DSBlup_{r,s}(G,\Gamma )$ will be the quotient of $\widetilde \DNC(G,\Gamma )$ by the action of $\R_+^*$. This is the ``double'' of the Lie groupoid with boundary $\SBlup_{r,s}(G,\Gamma )$.
%%%%% Pour l'index
\nomenclature[G, 19]{$\SBlup_{r,s}(G,\Gamma )$}{ The spherical version of $\Blup_{r,s}(G,\Gamma )$}
\nomenclature[G, 20]{$\wDNC(G,\Gamma )$}{The open subgroupoid of $\DNC(G,\Gamma )$ of $\DNC(G,\Gamma )$ consisting of elements whose image by $\DNC(r)$ and $\DNC(s)$ is not in $G^{(0)}_1\times \R$}
 \nomenclature[G, 20]{$\wDNC_+(G,\Gamma )$}{The restriction of $\widetilde \DNC(G,\Gamma )$ to $\R_+$}
%%%%%
\end{enumerate}

An analogous result about the groupoid structure on $\Blup_{r,s}(G,\Gamma )$ in the case of $\Gamma ^{(0)}$ being a hypersurface of $G^{(0)}$ can be found in \cite[Theorem 2.8]{GualtieriLi1} (\cf also \cite{GualtieriLi2}).

\subsubsection{Algebroid and anchor}

The (total space of the) Lie algebroid $\gA \Gamma $ is a closed submanifold (and a subbundle) of $\gA G$. The Lie algebroid of $\DNC(G,\Gamma )$ is $\DNC(\gA G,\gA \Gamma )$. Its anchor map is $\DNC(\natural_{G}):\DNC(\gA G,\gA \Gamma )\to \DNC(TG^{(0)},T\Gamma ^{(0)})$.

The groupoid $\DNC(G,\Gamma )$ is the union of its open subgroupoid $G\times \R^*$ with its closed Lie sub-groupoid $\cN_{\Gamma }^{G}$. The algebroid of $G\times \R^*$ is $\gA G\times \R^*$ and the anchor is just the map $\natural _{G}\times \id:\gA G\times \R^*\to T(G^{(0)}\times \R_+^*)$.

\subsubsection{Morita equivalence}\label{Moreq2}

Let $G_1\rightrightarrows G_1^{(0)}$ and $G_2\rightrightarrows G_2^{(0)}$ be Lie groupoids, $\Gamma_1\subset G_1$ and $\Gamma_2\subset G_2$ Lie subgroupoids. A Morita equivalence of the pair $(\Gamma_1\subset G_1)$  with the pair $(\Gamma_2\subset G_2)$ is given by a pair $(X\subset Y)$ where $Y$ is a linking manifold which is a Morita equivalence between $G_1$ and $G_2$ and $X\subset Y$ is a submanifold of $Y$ such that the maps $r,s$ and products of $Y$ (see page \pageref{Moreq1}) restrict to a Morita equivalence $X$ between $\Gamma_1$ and $\Gamma_2$.

Then, by functoriality, \begin{itemize}
\item $\DNC(Y,X)$ is a Morita equivalence between $\DNC(G_1,\Gamma_1)$ and $\DNC(G_2,\Gamma_2)$, 
\item$\DNC_+(Y,X)$ is a Morita equivalence between $\DNC_+(G_1,\Gamma_1)$ and $\DNC_+(G_2,\Gamma_2)$,
\item $\Blup_{r,s}(Y,X)$ is a Morita equivalence between $\Blup_{r,s}(G_1,\Gamma_1)$ and $\Blup_{r,s}(G_2,\Gamma_2)$, 
\item $\SBlup_{r,s}(Y,X)$ is a Morita equivalence between $\SBlup_{r,s}(G_1,\Gamma_1)$ and $\SBlup_{r,s}(G_2,\Gamma_2)$...
\end{itemize}

Note that if $Y$ and $X$ are sub-Morita equivalences, the above linking spaces are also sub-Morita equivalences.

\subsubsection{Groupoids on manifolds with boundary}
\label{BoundLiePoid}

Let $M$ be a manifold and $V$ an hypersurface in $M$ and suppose that $V$ cuts $M$ into two manifolds with boundary $M=M_- \cup M_+$ with $V=M_- \cap M_+$. Then by considering a tubular neighborhood of $V$ in $M$, $\DNC(M,V)=M\times \R^* \cup \cN_V^M\times \{0\}$ identifies with $M\times \R$, the quotient $\wDNC(M,V)/\R_+^*$ identifies with two copies of $M$ and $\SBlup(M,V)$  identifies with the disjoint union $M_- \sqcup M_+$. Under this last identification, the class under the gauge action of a normal vector in 
$\cN_V^M\setminus V\times \{0\}$ pointing in the direction of $M_+$ is an element of $V\subset M_+$.

\medskip

Let $M_b$ be manifold with boundary $V$. A \emph{piece of  Lie groupoid} is the restriction $G=\widetilde G_{M_b}^{M_b}$ to $M_b$ of a Lie groupoid $\widetilde G\rightrightarrows M$ where $M$ is a neighborhood of $M_b$ and $G$ is a groupoid without boundary. Note that when the boundary $V$ is transverse to the groupoid $\widetilde G$, $G$ is in fact a manifold with corners.

\smallskip With the above notation, since $V$ is of codimention $1$ in $M$,  $\SBlup(M,V)=M_b\sqcup M_-$ where $M_-= M\setminus \rM$ is the complement in $M$ of the interior $\rM=M_b\setminus V$ of $M_b$ in $M$. 

Let then $\Gamma\rightrightarrows V$ be a Lie subgroupoid of $\widetilde G$.

We may construct $\SBlup_{r,s}(\widetilde G,\Gamma)$ and consider its restriction to the open subset $M_b$ of $\SBlup(M,V)$. We thus obtain a longitudinally smooth groupoid that will be denoted $\SBlup_{r,s}(G,\Gamma)$.

Note that the groupoid $\SBlup_{r,s}(G,\Gamma) \rightrightarrows M_b$ is the restriction to $M_b$ of a Lie groupoid $\cG \rightrightarrows M$ for which $M_b$ is saturated. Indeed $\SBlup_{r,s}(G,\Gamma)$ is an open subgroupoid of $\SBlup_{r,s}(\widetilde G,\Gamma)\rightrightarrows M_b\sqcup M_-$ which is a piece of the Lie groupoid  $\wDNC(\widetilde G,\Gamma)/\R_+^*\rightrightarrows \wDNC(M,V)/\R_+^*\simeq   M\sqcup   M$. We may then let $\cG$ be the restriction of $\wDNC( M,V)/\R_+^*$ to one of the copies of $ M$.

\smallskip

In this way, we may treat by induction a finite number of boundary components \ie a groupoid on a manifold with corners.

\begin{remarks}
\begin{enumerate}
\item If $M$ is a manifold with boundary $V$ and $G=M\times M$ is the pair groupoid, then \(\SBlup_{r,s}(G,V)\) is in fact the groupoid associated with the $0$ calculus in the sense of Mazzeo (\cf \cite{Mazzeo88, Melbook, MM}), \ie the canonical pseudodifferential calculs associated with \(\SBlup_{r,s}(G,V)\) is the Mazzeo-Melrose's $0$-calculus. Indeed, the sections of the algebroid of \(\SBlup_{r,s}(G,V)\) are exactly the vector fields of $M$ vanishing at the boundary $V$, \ie those generating the $0$-calculus.

\item In a recent paper \cite{NistorLast}, an alternative description of $\SBlup_{r,s}(G,V)$ is given under the name of {\sl edge modification} for \(G\) along the ``\(\gA G\)-tame manifold" $V$, thus in particular \(V\) is transverse to \(G\). This is essentially the gluing construction described in \ref{subsecsubmorita} below. 

\end{enumerate}

\end{remarks}

\subsection{Examples of normal groupoids,  deformation groupoids and blowup groupoids}\label{sefaire}

We examine some particular cases of inclusions of groupoids $G_1\subset G_2$. The various constructions of deformation to the normal cone and blow-up allow us to recover many well known groupoids. As already noted in the introduction, our constructions immediately extend to the case where we restrict to a closed saturated subset of a smooth groupoid, in particular for manifolds with corners.

\subsubsection{Inclusion $F\subset E$ of vector spaces}\label{uge}
Let $E$ be a real vector space - considered as a group - and $F$ a vector subspace of $E$. The inclusion of groups $F\to E$ gives rise to a groupoid $\DNC(E,F)$. Using any supplementary subspace of $F$ in $E$, we may identify the groupoid $\DNC(E,F)$ with $E\times \R\rightrightarrows \R$. Its $C^*$-algebra identifies then with $C_0(E^*\times \R)$.

More generally, if $F$ is a vector-subbundle of a vector bundle $E$ over a manifold $M$ (considered as a family of groups indexed by $M$), then the groupoid $\DNC(E,F)\rightrightarrows M\times \R$ identifies with $E\times \R$ and its $C^*$-algebra is $C_0(E^*\times \R)$.

Let $p_E:E\to M$ be a vector bundle over a manifold $M$ and let $V$ be a submanifold of $M$. Let $p_F:F\to V$ be a subbundle of the restriction of $E$ to $V$. We use a tubular construction and find an open subset $U$ of $M$ which is a vector bundle $\pi:Q\to V$. Using $\pi$, we may extend $F$ to a subbundle $F_U$ of the restriction to $F$ on $U$. Using that, we may identify $\DNC(E,F)$ with the open subset $E\times \R^*\cup p_E^{-1}(U)\times \R$ of $E\times \R$. Its $C^*$-algebra  identifies then with $C_0(E^*\times \R^*\cup p_{E^*}^{-1}(U)\times \R)$.

\subsubsection{Inclusion $G^{(0)}\subset G$: adiabatic groupoid}

The deformation to the normal cone $\DNC(G,G^{(0)})$ is the adiabatic groupoid $G_{ad}$ (\cite{MP,NWX}), it is obtained by using the deformation to the normal cone construction for the inclusion of $G^{(0)}$ as a Lie subgroupoid of $G$. The normal bundle $N_{G^{(0)}}^G$  is the total space of the Lie algebroid \(\gA (G)\) of \(G\). Note that its groupoid structure coincides with its vector bundle structure. Thus,
\[\DNC(G,G^{(0)})=G\times \R^* \cup \gA(G)\times \{0\} \rightrightarrows G^{(0)}\times \R \ .\]  

The particular case where $G$ is the pair groupoid $M\times M$ is the original construction of the ``tangent groupoid'' of Alain Connes (\cite{ConnesNCG}).

\smallskip Note that   $\Blup(G^{(0)},G^{(0)})=\emptyset=\Blup_{r,s}(G,G^{(0)})$.

\subsubsection{Gauge adiabatic groupoid}\label{subsecgag}

Start with a Lie groupoid $G\rightrightarrows V$.

\medskip

Let $G\times (\R\times \R)\overset {\tilde r,\tilde s}\rightrightarrows V\times \R$ be the product groupoid of $G$ with the pair groupoid over $\R$. \\ First notice that since \(V\times \{0\}\) is a codimension \(1\) submanifold in \(V\times \R\), \(\SBlup(V\times \R,V\times \{0\})\) is canonically isomorphic to \(V\times (\R_-\sqcup \R_+)\).
Then $\SBlup_{\tilde r,\tilde s}(G\times (\R\times \R),V\times \{(0,0)\})_{V\times \R_+}^{V\times \R_+}$ is the semi-direct product groupoid $G_{ad}(V\times \R_+)\rtimes \R_+^*$: \[\SBlup_{\tilde r,\tilde s}(G\times (\R\times \R),V\times \{(0,0)\})_{V\times \R_+}^{V\times \R_+}= G_{ad}(V\times \R_+)\rtimes \R^*\rightrightarrows V\times \R_+ \ .\]

In other words, $\SBlup_{\tilde r,\tilde s}(G\times (\R\times \R),V\times \{(0,0)\})_{V\times \R_+}^{V\times \R_+}$ is the gauge adiabatic groupoid used in \cite{DS1}.

\smallskip Indeed, as $G\times (\R\times \R)$ is a vector bundle over $G$, $\DNC(G\times (\R\times \R),V\times \{(0,0)\})\simeq \DNC(G,V)\times \R^2$ (remark \ref{atcha}.\ref{zebut}). Under this identification, the gauge action of $\R^*$ is given by $\lambda.(w,t,t')=(\lambda.w,\lambda^{-1}t,\lambda^{-1}t')$.
The maps $\DNC(\tilde s)$ and $\DNC(\tilde r)$ are respectively $(w,t,t')\mapsto (\DNC(s)(w),t')$ and $(w,t,t')\mapsto (\DNC(r)(w),t)$.  It follows that $\SBlup_{\tilde r,\tilde s}(G\times (\R\times \R),V\times \{(0,0)\})$ is the quotient by the diagonal action of $\R_+^*$ of the open subset $\DNC(G,V)\times (\R^*)^2$ of $\DNC_+(G,V)\times \R^2$. 

According to the description of the groupoid of a group action on a groupoid given in section \ref{croise} it is isomorphic to $\DNC(G,V)_+\rtimes \R_+^* \times \{-1,+1\}^2$ where $\{-1,+1\}^2$ is the pair groupoid over $\{-1,+1\}$.

\subsubsection{Inclusion of a transverse submanifold of the unit space}\label{subsecsubmorita}

Let $G$ be a Lie groupoid with set of objects  \(M=G^{(0)} \) and let $V$ be a submanifold of $M$. We now study the special case of normal and blowup groupoids $\DNC(G,V)$ and $\Blup_{r,s}(G,V)$ (as well as $\SBlup_{r,s}(G,V)$) associated to the groupoid morphism $V\to G$.  

\smallskip Put $\rM=M\setminus V$.  Let $N=N_V^{G}$ and $N'=N_V^M$ be the normal bundles. We identify $N'$ with a subbundle of $N$ by means of the inclusion $M\subset G$. The submersions $r,s:G\to M$ give rise to bundle morphisms $r^N,s^N:N\to N'$ that are sections of the inclusion $N'\to N$. By construction, using remark \ref{UnicLin}.a), the groupoid $\DNC(G,V)$ is the union of $G\times \R^*$ with the family of linear groupoids $\cN_{r^N,s^N}(N)$. It follows that $\Blup_{r,s}(G,V)$ is the union of $G_{\rM}^{\rM}$ with the family $(\cP N,r^N,s^N)$ of projective groupoids.

\smallskip 
If $V$ is transverse to $G$, the bundle map $r^N-s^N:N=N_V^{G}\to N'=N_V^M$ is surjective; it follows that \begin{itemize}
\item $\cL_{r^N,s^N}(N)$ identifies with the pull-back groupoid $\big(\gA(G_V^V)\big)_q^q$ where $q:N'\to V$ is the projection,
\item  $(\cP N,r^N,s^N)$  with the pull-back groupoid $\big(\gA(G_V^V)\rtimes \R^*\big)_\rho^\rho$ where $\rho:\bP (N')\to V$ is the projection,
\item  $(\cS N,r^N,s^N)$  with the pull-back groupoid $\big(\gA(G_V^V)\rtimes \R_+^*\big)_p^p$ where $p:\bS (N')\to V$ is the projection.
\end{itemize}

Let us give a local description of these groupoids in the neighborhood of the transverse submanifold $V$. \\
Put $\ronde{G}=G_{M\setminus V}^{M\setminus V}$. Upon arguing locally, we can assume that $V$ is compact. 

By Remark \ref{soeur}, $V$ admits a tubular neighborhood $W\simeq N_V^M$ such that $G_W^W$ is the pull back of $G_V^V$ by the retraction $q:W\to V$. 

\smallskip The normal groupoid $\DNC(G_W^W,V)$ identifies with the pull back groupoid  $(\DNC(G_V^V,V))_q^q$ of the adiabatic deformation $\DNC(G_V^V,V)=(G_V^V)_{ad}$ by the map $q:N_V^M\to V$.

\smallskip The (spherical) blowup groupoid $\SBlup_{r,s}(G_W^W,V)$ identifies with the pull back groupoid $(\DNC_+(G_V^V,V)\rtimes \R_+^*)_p^p$ of the gauge adiabatic deformation $\DNC_+(G_V^V,V)\rtimes \R_+^*=(G_V^V)_{ga}$ by the map $p:\bS N_V^M\to V$.

In order to get $\SBlup_{r,s}(G,V)$, we then may glue $(\DNC_+(G_V^V,V)\rtimes \R_+^*)_p^p$ with $\ronde{G}$ in their common open subset $\big((G_V^V)_q^q\big)_{W\setminus V}^{W\setminus V} \simeq G_{W\setminus V}^{W\setminus V}$.

\subsubsection{Inclusion $G_V^V\subset G$ for a transverse hypersurface $V$ of $G$: $b$-groupoid}

If $V$ is a hypersurface of $M$, the blowup $\Blup(M\times M,V\times V)$ is just the construction of Melrose of the $b$-space. Its open subspace $\Blup_{r,s}(M\times M,V\times V)$ is the associated groupoid of Monthubert \cite{Month1, Month2}. Moreover, if $G$ is a groupoid on $M$ and $V$ is transverse to  $G$ we can form the restriction groupoid $G_V^V\subset G$ which is a submanifold of $G$. The corresponding blow up construction $\Blup_{r,s}(G,G_V^V)$ identifies with the fibered product $\Blup_{r,s}(M\times M,V\times V)\times _{M\times M}G$ (\cf remark \ref{atcha}.\ref{fegor}).

\smallskip Iterating (at least locally) this construction, we obtain the $b$-groupoid of Monthubert for manifolds with corners - \cf \cite{Month1, Month2}.

\begin{remark}
The groupoid \(\Blup_{r,s}(G,V)\) corresponds to inflating all the distances when getting close to $V$.

\smallskip The groupoid $\Blup_{r,s}(G,G_V^V)$ is a kind of \emph{cylindric deformation groupoid} which is obtained by  pushing the boundary $V$ at infinity but keeping the distances along $V$ constant.
\end{remark}

\begin{remark}
Intermediate examples between these two are given by a subgroupoid $\Gamma\rightrightarrows V$ of $G_V^V$. 

In the case where $G=M\times M$, such a groupoid $\Gamma$ is nothing else than the holonomy groupoid $Hol(V,\cF)$ of a regular foliation $\cF$ of $V$ (with trivial holonomy groups). The groupoid $\SBlup_{r,s}(M\times M,Hol(V,\cF))$ is a holonomy groupoid of a singular foliation of $M$: the sections of its algebroid. Its leaves are $M\setminus V$ and the leaves of $(V,\cF)$. The corresponding calculus, when $M$ is a manifold with a boundary $V$ is Rochon's generalization (\cite{Rochon}) of the $\phi $ calculus of Mazzeo and Melrose (\cite{MM2}).

\smallskip Iterating (at least locally) this construction, we obtain the holonomy groupoid associated to a stratified space in \cite{DLR}.
\end{remark}

\subsubsection{Inclusion $G_V^V\subset G$ for a saturated submanifold $V$ of $G$: shriek map for immersion}

Suppose now that  \(V\) is saturated thus \(G_V^V=G_V=G^V\). \\ In such a situation the groupoid \(G_V^V\) acts on the normal bundle \(N_{G_V^V}^G=r^*(N_V^{G^{(0)}}) \) and \(\DNC(G,G_V^V)\rightrightarrows \DNC(G^{(0)},V)\) coincides with the normal groupoid of the immersion  \(\varphi: G_V^V \rightarrow G\). This construction was defined in the case of foliation groupoids in \cite[section 3]{HilsSkMorph} and was used in order to define \(\varphi_!\) as associated \(KK\)-element.

\subsubsection{Inclusion $G_1\subset G_2$ with $G_1^{(0)}=G_2^{(0)}$}

This is the case for the tangent and adiabatic groupoid discussed above. Two other kinds of this situation\footnote{Note that in this case $\Blup(G_2^{(0)},G_1^{(0)})=\emptyset$, whence $\Blup_{r,s}(G_2,G_1)=\emptyset$.} can be encountered in the literature:
\begin{enumerate}
\item The case of a subfoliation $\cF_1$ of a foliation $\cF_2$ on a manifold $M$: shriek map for submersion. As pointed out in remark 3.19 of \cite{HilsSkMorph} the corresponding deformation groupoid $\DNC(G_2,G_1)$ gives an alternative construction of the element $\varphi_!$ where $\varphi:M/\cF_1\to M/\cF_2$ is a submersion of leaf spaces.

\item The case of a subgroup of a Lie group. \begin{itemize}
\item If $K$ is a maximal compact subgroup of a reductive Lie group $G$, the connecting map associated to the exact sequence of $\DNC(G,K)$ is the Dirac extension mapping the twisted $K$-theory of $K$ to the $K$-theory of $C^*_r(G)$ (see \cite{HigsonMackey}).
\item In the case where $\Gamma$ is a dense (non amenable) countable subgroup of a compact Lie group $K$, the groupoid $\DNC(K,\Gamma)$ was used in \cite{HLS} in order to produce a Hausdorff groupoid for which the Baum-Connes map is not injective.
\end{itemize}

\end{enumerate}

\subsubsection{Wrong way functoriality}

Let $f:G_1\to G_2$ be a morphism of Lie groupoids. If $f$ is an (injective) immersion the construction of  $\DNC_+(G_2,G_1)$ gives rise to a short exact sequence
$$0\longrightarrow C^*(G_2\times \R_+^*)\longrightarrow C^*(\DNC_+(G_2,G_1))\longrightarrow C^*(\cN_{G_1}^{G_2})\longrightarrow 0.$$ 
and consequently to a 
connecting map from the $K$-theory of the $C^*$-algebra of the groupoid $\cN_{G_1}^{G_2}$, which is a \VBG\ over $G_1$, to the $K$-theory of $C^*(G_2)$. This wrong way functoriality map will be discussed in the next section.

\medskip More generally let $\cG=G_1^{(0)}\times G_2\times G_1^{(0)}$ be the product of $G_2$ by the pair groupoid of $G_1^{(0)}$. Assume that the map $x\mapsto (r(x),f(x),s(x))$ is an immersion from $G_1\to \cG$. 

The above construction gives a map from $K_*(C^*(\cN_{G_1}^{\cG}))$ to $K_*(C^*(\cG))$ which is isomorphic to $K_*(C^*(G_2))$ since the groupoids $G_2$ and $\cG$ are canonically equivalent.

\section{\texorpdfstring{The $C^*$-algebra of a deformation and of a blowup groupoid, full symbol and index map}{}}\label{Section6}

Let $G\rightrightarrows M$ be a Lie groupoid and $\Gamma\rightrightarrows V$ a Lie subgroupoid of $G$. The groupoids $\DNC_+(G,\Gamma)$ and $\SBlup_{r,s}(G,\Gamma)$ that we constructed admit the closed saturated subsets $N_V^M\times \{0\}$ and $\bS N_V^M$ respectively. 
In order to shorten the notation we put $\rM=M\setminus V$ and the corresponding full symbol algebras \begin{itemize}
\item $\Sigma_{\DNC_+}(G,\Gamma)=\Sigma^{M\times \R_+^*}(\DNC_+(G,\Gamma))$;
\item $\Sigma_{\wDNC_+}(G,\Gamma)=\Sigma^{\rM\times \R_+^*}(\wDNC_+(G,\Gamma))$;
\item  $\Sigma_{\SBlup}(G,\Gamma)=\Sigma^{\rM}(\SBlup_{r,s}(G,\Gamma))$.
\end{itemize}

 %%%%% Pour l'index
\nomenclature[H, 20]{$\Sigma_{\DNC_+}(G,\Gamma)$}{The algebra $\Sigma^{M\times \R_+^*}(\DNC_+(G,\Gamma))$}
\nomenclature[H, 21]{$\Sigma_{\wDNC_+}(G,\Gamma)$}{The algebra $\Sigma^{\rM\times \R_+^*}(\wDNC_+(G,\Gamma))$}
\nomenclature[H, 22]{$\Sigma_{\SBlup}(G,\Gamma)$}{The algebra $\Sigma^{\rM}(\SBlup_{r,s}(G,\Gamma))$}
%%%%%%%

They give rise to the exact sequences 
$$0\longrightarrow C^*(G_{\rM}^{\rM})\longrightarrow C^*(\SBlup_{r,s}(G,\Gamma))\longrightarrow C^*(\cS N_\Gamma^G)\longrightarrow 0\eqno {E^\partial_{\SBlup}}$$
and
$$0\longrightarrow C^*(G\times \R_+^*)\longrightarrow C^*(\DNC_+(G,\Gamma))\longrightarrow C^*(\cN_{\Gamma}^{G})\longrightarrow 0\eqno {E^\partial_{\DNC_+}}$$
of groupoid $C^*$-algebras as well as index type ones 
$$0\longrightarrow C^*(G_{\rM}^{\rM})\longrightarrow \Psi^*(\SBlup_{r,s}(G,\Gamma))\longrightarrow \Sigma_{\SBlup}(G,\Gamma)\longrightarrow 0\eqno {E^{\wind}_{\SBlup}}$$
and
$$0\longrightarrow C^*(G\times \R_+^*)\longrightarrow \Psi^*(\DNC_+(G,\Gamma))\longrightarrow \Sigma_{\DNC_+}(G,\Gamma)\longrightarrow 0\eqno {E^{\wind}_{\DNC_+}}$$
We will compare the exact sequences given by $\DNC$ and by $\SBlup$.

If $V$ is $\gA G$-small (see notation \ref{notation6} below), we will show that, in a sense, $\DNC$ and $\SBlup$ give rise to equivalent exact sequences - both for the ``connecting'' ones and for the ``index'' ones.

We will then compare these elements with a coboundary construction.

We will compute these exact sequences when $\Gamma=V\subset M$. Finally, we will study a refinement of these constructions using relative $K$-theory.

\subsection{``DNC'' versus ``Blup''}\label{section:DNCversusBlup}

Let $\Gamma$ be a submanifold and a subgroupoid of a Lie-groupoid $G$. We will further assume that the groupoid $\Gamma$ is amenable. Put $M=G^{(0)}$ and $V=\Gamma^{(0)}$. Put also $\rM=M\setminus V$ and let $\rcN_{\Gamma}^{G}$ be the restriction of the groupoid $\cN_{\Gamma}^{G}$ to the open subset $\rN_V^M=N_V^M\setminus V$ of its unit space $N_V^M$.

\subsubsection{The connecting element}

As the groupoid $\Gamma$ is amenable we have exact sequences both for the reduced and for the maximal $C^*$-algebras: 
$$0\longrightarrow C^*(G_{\rM}^{\rM})\longrightarrow C^*(\SBlup_{r,s}(G,\Gamma))\longrightarrow C^*(\cS N_\Gamma^G)\longrightarrow 0\eqno {E^\partial_{\SBlup}}$$
and
$$0\longrightarrow C^*(G\times \R_+^*)\longrightarrow C^*(\DNC_+(G,\Gamma))\longrightarrow C^*(\cN_{\Gamma}^{G})\longrightarrow 0\eqno {E^\partial_{\DNC_+}}$$
By amenability, these exact sequences admit completely positive cross sections and therefore define elements
 $\partial_{\SBlup}^{G,\Gamma}= \partial_{\SBlup_{r,s}(G,\Gamma)}^{\rM} \in KK^1(C^*(\rcN_{\Gamma}^{G}/\R_+^*),C^*(G_{\rM}^{\rM}))$ and  $\partial_{\DNC_+}^{G,\Gamma} = \partial_{\DNC_+(G,\Gamma)}^{M\times \R^*_+} \in KK^1(C^*(\cN_{\Gamma}^{G}),C^*(G\times \R_+^*))$. 

 %%%%% Pour l'index
\nomenclature[I, 20]{$\partial_{\SBlup}^{G,\Gamma}$, $\partial_{\DNC_+}^{G,\Gamma}$, $\partial_{\wDNC_+}^{G,\Gamma}$}{Respectively the element $\partial_{\SBlup_{r,s}(G,\Gamma)}^{\rM}$, $\partial_{\DNC_+(G,\Gamma)}^{M\times \R^*_+} $ and $\partial_{\wDNC_+(G,\Gamma)}^{\rM\times \R^*_+}$}
 %%%%% 
 
With the notation of section \ref{red}, write $\DNC_+$ for $\DNC$ restricted to $\R_+$ and ${\wDNC_+}$ for $\wDNC$ restricted to $\R_+$.

By section \ref{ctoi},  we have a diagram where the vertical arrows are $KK^1$-equivalences and the squares commute in $KK$-theory.
$$\xymatrix{0\ar[r] &C^*(G_{\rM}^{\rM})\ar[r]\ar@{-}[d]^{\beta'}&C^*(\SBlup_{r,s}(G,\Gamma))\ar[r]\ar@{-}[d]^{\beta}& C^*(\rcN_{\Gamma}^{G}/\R_+^*)\ar[r]\ar@{-}[d]^{\beta''}&0 & \hspace{1cm} {E^\partial_{\SBlup}}\\
0\ar[r] &C^*(G_{\rM}^{\rM}\times \R_+^*)\ar[r]&C^*(\wDNC_+(G,\Gamma))\ar[r]& C^*(\rcN_{\Gamma}^{G})\ar[r]&0 &   \hspace{1cm} {E^\partial_{\wDNC_+}} }$$
Denote by $\partial_{\wDNC_+}^{G,\Gamma}=\partial_{\wDNC_+(G,\Gamma)}^{\rM\times \R^*_+}$ the connecting element associated to $E^\partial_{\wDNC_+}$. We thus have, according to proposition \ref{etK1}:

\begin{fact}\label{fact1}
$\partial_{\SBlup}^{G,\Gamma}\otimes \beta'=-\beta''\otimes \partial_{\wDNC_+}^{G,\Gamma}\in KK^1(C^*(\cS N_{\Gamma}^G),C^*(G_{\rM}^{\rM}\times \R_+^*))$.
\end{fact}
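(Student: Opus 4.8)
The plan is to recognize this as a direct instance of Proposition \ref{etK1}. All the content is already in place: one only needs to identify the exact sequences $E^\partial_{\SBlup}$ and $E^\partial_{\wDNC_+}$ as the pair of sequences produced by Proposition \ref{etK1} for a well-chosen groupoid carrying a proper $\R_+^*$-action, together with the vertical Connes--Thom comparison displayed in Section \ref{ctoi}.

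First I would set up the data for Proposition \ref{etK1}. Take the Lie groupoid there to be $\wDNC_+(G,\Gamma)$, equipped with the restriction to $\R_+^*$ of the gauge action of $\R^*$ on $\wDNC(G,\Gamma)$; by item 4 of Section \ref{red} this action is (locally) proper, and by the very definition of $\SBlup_{r,s}$ given in Section \ref{red} one has $\wDNC_+(G,\Gamma)/\R_+^* = \SBlup_{r,s}(G,\Gamma)$. (As usual, cf. Remark \ref{Mnotmanifold}, $\wDNC_+(G,\Gamma)$, although it sits over $\R_+$, is the restriction of the genuine Lie groupoid $\wDNC(G,\Gamma)$ to the closed saturated subset lying over $\R_+$, so the formalism of Section \ref{ne} applies.) Its unit space is $\rM\times\R_+^*\ \cup\ \rN_V^M\times\{0\}$; take the open $\R_+^*$-invariant saturated subset $W=\rM\times\R_+^*$, with closed complement $F=\rN_V^M\times\{0\}$. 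One then checks directly that $(\wDNC_+(G,\Gamma))_W^W = G_{\rM}^{\rM}\times\R_+^*$ while $(\wDNC_+(G,\Gamma))_F^F = \rcN_{\Gamma}^{G}$, the latter being amenable because $\Gamma$ is amenable (so $\cN_\Gamma^G$, hence its restriction $\rcN_\Gamma^G$, is amenable) --- which is exactly what is needed for the two sequences to be semi-split for both the full and the reduced completions.

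Next I would pass to the quotient by $\R_+^*$: here $W/\R_+^*=\rM$ with $(G_{\rM}^{\rM}\times\R_+^*)/\R_+^* = G_{\rM}^{\rM}$, and $F/\R_+^* = \bS N_V^M$ with $\rcN_{\Gamma}^{G}/\R_+^* = \cS N_\Gamma^G$. Thus the two horizontal exact sequences of the diagram of Section \ref{ctoi} become precisely $E^\partial_{\SBlup}$ (the top/quotient row) and $E^\partial_{\wDNC_+}$ (the bottom row), the three vertical $KK^1$-equivalences being $\beta'\in KK^1(C^*(G_{\rM}^{\rM}),C^*(G_{\rM}^{\rM}\times\R_+^*))$, $\beta$, and $\beta''\in KK^1(C^*(\cS N_\Gamma^G),C^*(\rcN_\Gamma^G))$, exactly as in the diagram displayed before the statement. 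Proposition \ref{etK1} then yields $\partial_{\SBlup_{r,s}(G,\Gamma)}^{\rM}\otimes\beta' = -\beta''\otimes\partial_{\wDNC_+(G,\Gamma)}^{\rM\times\R_+^*}$ in $KK^1(C^*(\cS N_\Gamma^G),C^*(G_{\rM}^{\rM}\times\R_+^*))$, which is the asserted identity after unwinding $\partial_{\SBlup}^{G,\Gamma}=\partial_{\SBlup_{r,s}(G,\Gamma)}^{\rM}$ and $\partial_{\wDNC_+}^{G,\Gamma}=\partial_{\wDNC_+(G,\Gamma)}^{\rM\times\R_+^*}$.

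There is no analytic difficulty here; the one thing that must be checked with care is the bookkeeping --- that the open/closed decomposition of the unit space of $\wDNC_+(G,\Gamma)$, the gauge $\R_+^*$-action, and the resulting quotient groupoid match the pieces $G_{\rM}^{\rM}$, $G_{\rM}^{\rM}\times\R_+^*$, $\cS N_\Gamma^G$, $\rcN_\Gamma^G$ occurring in $E^\partial_{\SBlup}$ and $E^\partial_{\wDNC_+}$ --- and, in particular, that the minus sign is exactly the one produced by the orientation reversal of $(0,1)^2$ in the proof of Proposition \ref{etK1}. Given the explicit descriptions in Sections \ref{red} and \ref{ctoi}, this verification is routine.
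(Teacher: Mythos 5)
Your proof is correct and follows exactly the paper's own route: the paper derives Fact \ref{fact1} precisely by applying Proposition \ref{etK1} (Section \ref{ctoi}) to the groupoid $\wDNC_+(G,\Gamma)$ with its gauge $\R_+^*$-action and the invariant open saturated subset $\rM\times\R_+^*$, whose quotient sequence is $E^\partial_{\SBlup}$. Your extra bookkeeping (identifying the restrictions to $W$ and $F$, amenability of $\rcN_\Gamma^G$, and the manifold-with-boundary caveat via Remark \ref{Mnotmanifold}) is exactly the verification the paper leaves implicit.
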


\bigskip
We also have a commutative diagram where the vertical maps are inclusions:
\begin{equation}\label{diagram1}
\begin{gathered} 
\xymatrix{0\ar[r] &C^*(G_{\rM}^{\rM}\times \R_+^*)\ar[r]\ar[d]^{j'}&C^*({\wDNC_+}(G,\Gamma))\ar[r]\ar[d]^{j}& C^*(\rcN_{\Gamma}^{G})\ar[r]\ar[d]^{j''}&0\\
0\ar[r] &C^*(G\times \R_+^*)\ar[r] &C^*(\DNC_+(G,\Gamma))\ar[r] &C^*(\cN_{\Gamma}^{G})\ar[r]&0}
\end{gathered} 
\end{equation}

We thus find
\begin{fact}\label{fact2}
$(j'')^*(\partial_{\DNC_+}^{G,\Gamma })=j'_*(\partial_{\wDNC_+}^{G,\Gamma })\in KK^1(C^*(\rcN_{\Gamma}^{G}),C^*(G\times \R_+^*))$.
\end{fact}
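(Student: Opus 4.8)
The plan is to read Fact \ref{fact2} off directly from Lemma \ref{elabet}.(1), applied to the commutative diagram \eqref{diagram1}.

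First I would record that both rows of \eqref{diagram1} are semi-split short exact sequences of $C^*$-algebras. The bottom row is $E^\partial_{\DNC_+}$, whose quotient $C^*(\cN_{\Gamma}^{G})$ is nuclear because $\cN_{\Gamma}^{G}$ is amenable (it is a \VBG\ over the amenable groupoid $\Gamma$); hence this extension admits a completely positive cross section and is semi-split, exactly as was already used to define $\partial_{\DNC_+}^{G,\Gamma}$ and $\partial_{\SBlup}^{G,\Gamma}$. The top row is $E^\partial_{\wDNC_+}$, whose quotient is $C^*(\rcN_{\Gamma}^{G})$; here $\rcN_{\Gamma}^{G}$ is the restriction of the amenable groupoid $\cN_{\Gamma}^{G}$ to the open subset $\rN_V^M$ of its unit space $N_V^M$, and such a restriction is again amenable, so this row is semi-split as well. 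The vertical maps $j'$, $j$, $j''$ are the inclusion $*$-homomorphisms, and the squares commute by construction of $\wDNC_+$ inside $\DNC_+$.

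Then Lemma \ref{elabet}.(1) yields $\partial_{\wDNC_+}^{G,\Gamma}\otimes [j']=[j'']\otimes \partial_{\DNC_+}^{G,\Gamma}$ in $KK^1(C^*(\rcN_{\Gamma}^{G}),C^*(G\times \R_+^*))$, where $\partial_{\wDNC_+}^{G,\Gamma}$ is the class of the top row and $\partial_{\DNC_+}^{G,\Gamma}$ that of the bottom row. Finally I would rewrite both sides using the standard identities $x\otimes [f]=f_*(x)$ and $[f]\otimes y=f^*(y)$ for the Kasparov product against the $KK$-class of a morphism: the left-hand side becomes $j'_*(\partial_{\wDNC_+}^{G,\Gamma})$ and the right-hand side becomes $(j'')^*(\partial_{\DNC_+}^{G,\Gamma})$, which is precisely the asserted equality.

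There is essentially no genuine obstacle: the statement is a bookkeeping consequence of the general Lemma \ref{elabet}. The only point worth a line of justification is the semi-splitness of the top row, i.e.\ the amenability of the restricted groupoid $\rcN_{\Gamma}^{G}$, and that is immediate from the amenability of $\cN_{\Gamma}^{G}$ together with the stability of amenability under restriction to a subset of the unit space. (If $\Gamma$ were not assumed amenable one would run the same argument in $E$-theory of maximal $C^*$-algebras, as indicated earlier in the paper.)
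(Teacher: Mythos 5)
Your proposal is correct and is exactly the paper's argument: the paper simply displays diagram (\ref{diagram1}) and reads Fact \ref{fact2} off from Lemma \ref{elabet}.(1), with the same identification $\partial_{\wDNC_+}^{G,\Gamma}\otimes[j']=[j'']\otimes\partial_{\DNC_+}^{G,\Gamma}$ that you make. Your extra remark on semi-splitness of the top row via amenability of $\rcN_{\Gamma}^{G}$ is a point the paper leaves implicit but is correctly justified.
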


 \subsubsection{The full symbol index}

 We now compare the elements $\wind_{\SBlup}^{G,\Gamma} =\wind_{full}^{\rM}(\SBlup_{r,s}(G,\Gamma))   \in KK^1(\Sigma_{\SBlup}(G,\Gamma),C^*(G_{\rM}^{\rM}))$ and $\wind _{\DNC_+}^{G,\Gamma} =\wind_{full}^{M\times \R_+^*}(\DNC_+(G,\Gamma)) \in KK^1(\Sigma_{\DNC_+}(G,\Gamma),C^*(G\times \R_+^*))$ defined by the semi-split exact sequences:
 $$0\longrightarrow C^*(G_{\rM}^{\rM})\longrightarrow \Psi^*(\SBlup_{r,s}(G,\Gamma))\longrightarrow \Sigma_{\SBlup}(G,\Gamma)\longrightarrow 0\eqno {E^{\wind }_{\SBlup}}$$
and
$$0\longrightarrow C^*(G\times \R_+^*)\longrightarrow \Psi^*(\DNC_+(G,\Gamma))\longrightarrow \Sigma_{\DNC_+}(G,\Gamma)\longrightarrow 0\eqno {E^{\wind }_{\DNC_+}}$$
 
 %%%%% Pour l'index
\nomenclature[I, 21]{$\wind_{\SBlup}^{G,\Gamma}$, $\wind _{\DNC_+}^{G,\Gamma}$, $\wind_{\wDNC_+}^{G,\Gamma}$}{Respectively the elements $\wind_{full}^{\rM}(\SBlup_{r,s}(G,\Gamma))$, $\wind_{full}^{M\times \R_+^*}(\DNC_+(G,\Gamma))$ and $\wind_{full}^{\rM\times \R_+^*}(\wDNC_+(G,\Gamma))$}
%%%%%%%%

By prop. \ref{betasigma},  we have a diagram where the vertical arrows are $KK^1$-equivalences and the squares commute in $KK$-theory.
$$\xymatrix{0\ar[r] &C^*(G_{\rM}^{\rM})\ar[r]\ar@{-}[d]^{\beta'}&\Psi^*(\SBlup_{r,s}(G,\Gamma))\ar[r]\ar@{-}[d]^{\beta_\Psi}&  \Sigma_{\SBlup}(G,\Gamma)\ar[r]\ar@{-}[d]^{\beta_{\Sigma}}&0\\
0\ar[r] &C^*(G_{\rM}^{\rM}\times \R_+^*)\ar[r]&\Psi^*(\wDNC_+(G,\Gamma))\ar[r]& \Sigma_{\wDNC_+}(G,\Gamma)\ar[r]&0}$$
We let $\wind_{\wDNC_+}^{G,\Gamma}=\wind_{full}^{\rM\times \R_+^*}(\wDNC_+(G,\Gamma))\in KK^1(\Sigma_{\wDNC_+}(G,\Gamma),C^*(\rG\times \R_+^*))$. We thus have:

\begin{fact}\label{fact7}
$\wind_{\SBlup}^{G,\Gamma}\otimes \beta'=-\beta_\Sigma \otimes \wind_{\wDNC_+}^{G,\Gamma}\in KK^1(\Sigma_{\SBlup}(G,\Gamma),C^*(G_{\rM}^{\rM}\times \R_+^*))$.
\end{fact}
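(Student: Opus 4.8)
The plan is to deduce Fact \ref{fact7} directly from Proposition \ref{betasigma}, in complete parallel with the way Fact \ref{fact1} is deduced from Proposition \ref{etK1}. The groupoid to which Proposition \ref{betasigma} should be applied is $G'=\wDNC_+(G,\Gamma)$ equipped with its gauge action of $\R_+^*$, and the $\R_+^*$-invariant open saturated subset of its unit space $\wDNC_+(M,V)=(\rM\times\R_+^*)\cup(\rN_V^M\times\{0\})$ is $W=\rM\times\R_+^*$. First I would check that we are genuinely in the hypotheses of section \ref{ctoi}: by the construction recalled in section \ref{red} (items \ref{jaiplusdidee} and following), the gauge action on $\wDNC_+(G,\Gamma)$ is free and locally proper in the sense of Remark \ref{RemarqueToutePropre}; the set $W$ is visibly $\R_+^*$-invariant and open, and it is \emph{saturated} for $G'$ precisely because $\wDNC$ — unlike $\DNC$ — only retains arrows both of whose endpoints avoid $V\times\R$, so an arrow with source of positive $\R_+$-coordinate lying outside $V$ automatically has range of the same type. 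Its complement $F=\rN_V^M\times\{0\}$ supports the groupoid $G'_F=\rcN_\Gamma^G$, which is amenable since $\Gamma$ is (the standing hypothesis of section \ref{section:DNCversusBlup}); this guarantees that the sequence $E^{\wind}_{\wDNC_+}$ and its $\R_+^*$-quotient $E^{\wind}_{\SBlup}$ are semi-split with nuclear quotients, so that all the $KK^1$-elements in sight are defined. Since $\wDNC_+(G,\Gamma)$ is a ``piece of Lie groupoid'' rather than a boundaryless Lie groupoid, one invokes here the convention of Remark \ref{Mnotmanifold} that all of these constructions restrict from an ambient boundaryless Lie groupoid.

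The second step is the bookkeeping of identifications, so that Proposition \ref{betasigma} literally produces the formula. By definition (section \ref{red}), $G'/\R_+^*=\SBlup_{r,s}(G,\Gamma)$; the restriction $G'_W$ is the product groupoid $G_{\rM}^{\rM}\times\R_+^*$, on which $\R_+^*$ acts by dilation of the second coordinate, so $G'_W/\R_+^*=G_{\rM}^{\rM}$ and the Connes--Thom element $\beta^{G'_W}$ is exactly the element $\beta'\in KK^1(C^*(G_{\rM}^{\rM}),C^*(G_{\rM}^{\rM}\times\R_+^*))$ of the diagram preceding the statement — and in fact the Bott element, the action being inner. Likewise $\Sigma^W(G')=\Sigma_{\wDNC_+}(G,\Gamma)$, $\Sigma^{W/\R_+^*}(G'/\R_+^*)=\Sigma_{\SBlup}(G,\Gamma)$, the two connecting elements are $\wind_{full}^W(G')=\wind_{\wDNC_+}^{G,\Gamma}$ and $\wind_{full}^{W/\R_+^*}(G'/\R_+^*)=\wind_{\SBlup}^{G,\Gamma}$, and $\beta_\Sigma=\beta_\Sigma^{(G',W)}$. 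The only identification that needs a line of argument is that the $\R_+^*$-action on $\Sigma^W(G')$ used in section \ref{gueule} to construct $\beta_\Sigma$ — the one coming from the canonical $\R$-action on $\Psi^*(G')$ — coincides with the one induced by the geometric gauge action; this follows from the naturality of that $\R$-action with respect to groupoid automorphisms.

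With all identifications in place, Proposition \ref{betasigma}.(2) reads $\beta_\Sigma\otimes\wind_{\wDNC_+}^{G,\Gamma}=-\wind_{\SBlup}^{G,\Gamma}\otimes\beta'$, which is Fact \ref{fact7}, while Proposition \ref{betasigma}.(1) gives the invertibility of $\beta_\Sigma$. The minus sign is already built into Propositions \ref{etK1} and \ref{betasigma}, arising from the two opposite orientations of $(0,1)^2$ in the mapping-cone description of the connecting maps. I do not anticipate a real obstacle: the argument is a specialization of Proposition \ref{betasigma}, and the only slightly delicate point is the compatibility of the two descriptions of the $\R_+^*$-action on the full symbol algebra noted above. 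If one prefers not to invoke Proposition \ref{betasigma} as a black box, the same conclusion follows by applying Lemma \ref{elabet} to the $KK$-equivariant diagram of semi-split exact sequences displayed just before the statement — its outer vertical arrows $\beta'$ and $\beta_\Psi$ being $KK^1$-equivalences (the first as a Connes--Thom element, the second by the $\Psi^*$-level result of section \ref{saison}) — which forces $\beta_\Sigma$ to be a $KK^1$-equivalence and identifies the connecting elements up to the orientation sign.
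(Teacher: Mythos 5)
Your proposal is correct and follows exactly the paper's route: Fact \ref{fact7} is obtained by applying Proposition \ref{betasigma} to the groupoid $\wDNC_+(G,\Gamma)$ with its (free, locally proper) gauge action of $\R_+^*$ and the invariant open saturated subset $W=\rM\times\R_+^*$, whose quotient data are $\SBlup_{r,s}(G,\Gamma)$, $\beta'$, $\beta_\Psi$ and $\beta_\Sigma$. Your extra verifications (saturation of $W$, amenability of $\rcN_\Gamma^G$, the identification of the $\R_+^*$-action on the full symbol algebra) are harmless elaborations of what the paper leaves implicit.
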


We also have a commutative diagram  where the vertical maps are inclusions:
\begin{equation}\label{diagram2}
\begin{gathered} 
\xymatrix{0\ar[r] &C^*(G_{\rM}^{\rM}\times \R_+^*)\ar[r]\ar[d]^{j'}&\Psi^*(\wDNC_+(G,\Gamma))\ar[r]\ar[d]^{j_\Psi}& \Sigma_{\wDNC_+}(G,\Gamma)\ar[r]\ar[d]^{j_\Sigma}&0\\
0\ar[r] &C^*(G\times \R_+^*)\ar[r] &\Psi^*(\DNC_+(G,\Gamma))\ar[r] &\Sigma_{\DNC_+}(G,\Gamma)\ar[r]&0
}
\end{gathered} 
\end{equation}

We thus find:
\begin{fact}\label{fact8}
$j_\Sigma^*(\wind_{\DNC_+}^{G,\Gamma})=j'_*(\wind_{\wDNC_+}^{G,\Gamma})\in KK^1(\Sigma_{\wDNC_+}(G,\Gamma),C^*(G\times \R_+^*))$.
\end{fact}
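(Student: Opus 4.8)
The plan is to derive Fact~\ref{fact8} as an immediate application of Lemma~\ref{elabet}.(1) to the commutative diagram (\ref{diagram2}), in complete parallel with the way Fact~\ref{fact2} was obtained from (\ref{diagram1}). Concretely, I would read (\ref{diagram2}) as an instance of the diagram of Lemma~\ref{elabet} with top row $J_1=C^*(G_{\rM}^{\rM}\times\R_+^*)$, $A_1=\Psi^*(\wDNC_+(G,\Gamma))$, $B_1=\Sigma_{\wDNC_+}(G,\Gamma)$, bottom row $J_2=C^*(G\times\R_+^*)$, $A_2=\Psi^*(\DNC_+(G,\Gamma))$, $B_2=\Sigma_{\DNC_+}(G,\Gamma)$, and vertical maps $f_J=j'$, $f_A=j_\Psi$, $f_B=j_\Sigma$. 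By construction the $KK^1$-class of the top (resp. bottom) row is $\wind_{\wDNC_+}^{G,\Gamma}$ (resp. $\wind_{\DNC_+}^{G,\Gamma}$).

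Granting that both rows are semi-split exact, Lemma~\ref{elabet}.(1) yields at once
$$\wind_{\wDNC_+}^{G,\Gamma}\otimes[j']=[j_\Sigma]\otimes\wind_{\DNC_+}^{G,\Gamma}\in KK^1\big(\Sigma_{\wDNC_+}(G,\Gamma),C^*(G\times\R_+^*)\big),$$
which is exactly the claimed identity $j'_*(\wind_{\wDNC_+}^{G,\Gamma})=j_\Sigma^*(\wind_{\DNC_+}^{G,\Gamma})$, once one recalls that Kasparov product on the left by $[j_\Sigma]$ is $j_\Sigma^*$ and on the right by $[j']$ is $j'_*$. So the statement is, up to this bookkeeping, formal.

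The only real content is therefore the verification that (\ref{diagram2}) is a commutative diagram of semi-split exact sequences. Exactness of the two pseudodifferential-type rows, simultaneously for the reduced and the full $C^*$-algebras, follows from the standing amenability of $\Gamma$ (hence of $\cN_\Gamma^G$, $\rcN_\Gamma^G$ and their spherical/boundary pieces), just as for $E^\partial_{\DNC_+}$ and $E^\partial_{\wDNC_+}$; the completely positive sections exist because the quotients $\Sigma_{\wDNC_+}(G,\Gamma)$ and $\Sigma_{\DNC_+}(G,\Gamma)$ are nuclear — they are fibered products of the commutative algebras $C(\bS^*\gA\,\cdot)$ with the nuclear algebras $\Psi^*(\cN_\Gamma^G)$, $\Psi^*(\rcN_\Gamma^G)$ — so the Choi--Effros lifting theorem applies (alternatively one lifts through an explicit quantization map). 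Commutativity of the squares, and the fact that $j_\Psi$ maps the ideal $C^*(G_{\rM}^{\rM}\times\R_+^*)$ into $C^*(G\times\R_+^*)$ and induces $j_\Sigma$ on the quotients, is precisely the naturality of the pseudodifferential calculus and of the full symbol map under restriction of $\DNC_+(G,\Gamma)$ to its open subgroupoid $\wDNC_+(G,\Gamma)$, in the sense of Remark~\ref{Mnotmanifold} and Section~\ref{section:DNCversusBlup}. I do not expect a genuine obstacle here; the one point requiring a little care is identifying the algebroids and unit spaces correctly and checking that the open (non-saturated) restriction $\wDNC_+\subset\DNC_+$ still produces honest inclusions of groupoid $C^*$-algebras and of their $\Psi^*$-completions — which is what the display (\ref{diagram2}) already records.
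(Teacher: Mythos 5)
Your proposal is correct and is exactly the paper's (implicit) argument: Fact \ref{fact8} is stated right after diagram (\ref{diagram2}) precisely as an instance of Lemma \ref{elabet}.(1), with $f_J=j'$, $f_A=j_\Psi$, $f_B=j_\Sigma$, giving $\wind_{\wDNC_+}^{G,\Gamma}\otimes[j']=[j_\Sigma]\otimes\wind_{\DNC_+}^{G,\Gamma}$. Your additional verification that the rows are semi-split (via amenability of $\Gamma$ and nuclearity of the full symbol algebras) and that the squares commute by naturality of the pseudodifferential calculus under restriction to the open subgroupoid is exactly what the paper leaves tacit.
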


\subsubsection{When $V$ is $\gA G$-small}

If $V$ is small in each $G$ orbit, \ie if the Lebesgue measure (in the manifold $G^x$) of $G_V^x$ is $0$ for every $x$, it follows from  prop. \ref{propstab} below  that the inclusion $i: C^*(G_{\rM}^{\rM})  \hookrightarrow C^*(G)$ is an isomorphism. Also, if $\rM$ meets all the orbits of $G$, the inclusion $i$ is a Morita equivalence. In these cases $\partial_{\DNC_+}^{G,\Gamma }$ determines $\partial_{\SBlup}^{G,\Gamma}$.

\begin{definition}\label{notation6}
We will say that $V$ is $\gA G$-small if for every $x\in V$, the composition $\gA G_x\overset{\natural_x}{\longrightarrow} T_xM\longrightarrow (N_V^M)_x$ is not the zero map
\end{definition}

If $V$ is $\gA G$-small, then the orbits of the groupoid $\cN_\Gamma^G$ are never contained in the $0$ section, \ie they meet the open subset $\rN_V^M$, and in fact the set $V\times \{0\}$ is small in every orbit of the groupoid $\DNC(G,\Gamma)$. It follows that the map $j$ is an isomorphism - as well of course as $j'$ and $j''$ of diagram (\ref{diagram1}). In that case, $\partial_{\DNC_+}^{G,\Gamma }$ and $\partial_{\SBlup}^{G,\Gamma}$ correspond to each other under these isomorphisms. 

\begin{proposition}
 \label{propstab} (\cf \cite{HilsSkstab, DSstab}) Let $\cG\rightrightarrows Y$ be a Lie groupoid and let $X\subset Y$ be a (locally closed) submanifold. Assume that, for every $x\in X$, the composition $\gA \cG_x\overset{\natural_x}{\longrightarrow} T_xY\longrightarrow (N_X^Y)_x$ is not the zero map. Then the inclusion $C^*(G_{Y\setminus X}^{Y\setminus X})\to C^*(G)$ is an isomorphism.
\begin{proof}
For every $x\in V$, we can find a neighborhood $U$ of $x\in M$, a section $X$ of $\gA G$ such that, for every $y\in U$, $\natural_y (X(y))\ne 0$ and, if $y\in U\cap V$, $\natural_y (X(y))\not\in T_y(V)$. Denote by $\cF$ the foliation of $U$ associated with the vector field $X$.  It follows from \cite{HilsSkstab} that $C_0(U\setminus V)C^*(U,\cF)=C^*(U,\cF)$; as $C^*(U,\cF)$ acts in a non degenerate way on the Hilbert-$C^*(G)$ module $C^*(G^U)$, we deduce that $C_0(U\setminus V)C^*(G^U)=C^*(G_U)$. We conclude using a partition of the identity argument that $C_c(M\setminus V)C^*(G)=C_c(M)C^*(G)$, whence $C_0(M\setminus V)C^*(G)=C_0(M)C^*(G)=C^*(G)$.
\end{proof}
\end{proposition}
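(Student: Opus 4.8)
The plan is to prove that $C_0(Y\setminus X)$ acts non-degenerately on $C^*(\cG)$ (as a subalgebra of $C_0(Y)\subset M(C^*(\cG))$), i.e. that $\overline{C_0(Y\setminus X)\,C^*(\cG)}=C^*(\cG)$. Granting this, since $C_0(Y\setminus X)$ is self-adjoint and an essential ideal of $C_0(Y)$, a standard manipulation with approximate units gives $\overline{C_0(Y\setminus X)\,C^*(\cG)\,C_0(Y\setminus X)}=C^*(\cG)$; and because every $\phi\in C_c(\cG_{Y\setminus X}^{Y\setminus X})$ has its range and source supports in $Y\setminus X$, the left-hand side is exactly $C^*(\cG_{Y\setminus X}^{Y\setminus X})$, so the inclusion is an isomorphism. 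The whole problem is thus local near $X$: it suffices to show that each $x\in X$ has an open neighbourhood $U$ in $Y$ with $C_0(U\setminus X)\,C^*(\cG^U)=C^*(\cG^U)$, i.e. $C_0(U\setminus X)$ acting non-degenerately on the Hilbert $C^*(\cG)$-module $C^*(\cG^U)$.

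To produce such a $U$ I would use the hypothesis to choose, near $x$, a local section $\chi$ of the algebroid $\gA\cG$ whose anchor $\natural(\chi)$ is nowhere zero on $U$ and, at every point of $U\cap X$, is not tangent to $X$; this is possible precisely because $\natural_x(\gA\cG_x)\not\subset T_xX$. The vector field $\natural(\chi)$ generates a one-dimensional regular foliation $\cF$ of $U$ whose leaves are integral curves of a non-vanishing vector field transverse to $X$, so every leaf meets $U\setminus X$ and $U\cap X$ has positive codimension in each leaf. By the stability theorem of Hilsum--Skandalis (\cite{HilsSkstab}) this forces $C_0(U\setminus X)\,C^*(U,\cF)=C^*(U,\cF)$. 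On the other hand, exponentiating $\chi$ produces a local one-parameter family of (local) bisections of $\cG|_U$, and convolution by these turns $C^*(U,\cF)$ into an algebra acting non-degenerately on the module $C^*(\cG^U)$; combining the two statements yields $C_0(U\setminus X)\,C^*(\cG^U)=C^*(\cG^U)$, which is the desired local claim.

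Finally I would globalise: cover a neighbourhood of $X$ in $Y$ by such open sets $U$, complete to an open cover of $Y$ by adding $Y\setminus X$, and take a subordinate partition of unity. For $f\in C_c(Y)$ and $b\in C^*(\cG)$, each piece $(\widetilde{\rho_U f})\,b$ lies in $C^*(\cG^U)$, hence in $\overline{C_0(U\setminus X)\,C^*(\cG)}\subset\overline{C_0(Y\setminus X)\,C^*(\cG)}$ by the local step, while the piece coming from $Y\setminus X$ lies there trivially. As such elements $fb$ are dense in $C^*(\cG)$, this gives $\overline{C_0(Y\setminus X)\,C^*(\cG)}=C^*(\cG)$, and then $C^*(\cG_{Y\setminus X}^{Y\setminus X})=C^*(\cG)$ as explained above.

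I expect the main obstacle to be the local step, whose analytic heart is already isolated in \cite{HilsSkstab}: the content is that a flow transverse to $X$ ``sweeps $X$ away'', so that cutting down by $C_0(U\setminus X)$ costs nothing at the level of $C^*$-algebras. Within the present argument the two delicate points are (i) turning the flow of $\chi$ into a genuine non-degenerate action on the groupoid module $C^*(\cG^U)$ — which uses that $\chi$ is a section of $\gA\cG$ rather than an abstract vector field on $U$ — and (ii) organising the patching so that it really produces an approximate unit of $C_0(Y\setminus X)$ strictly convergent to $1$ in $M(C^*(\cG))$. For the reduced $C^*$-algebra one could alternatively argue by hand that, by transversality, $r^{-1}(X)$ is of positive codimension, hence measure zero, in each source fibre $\cG_y$, so multiplication by $\mathbf 1_{Y\setminus X}$ is the identity in every regular representation; but upgrading this to a norm (not merely strong) approximation requires essentially the same uniformity estimate, which is why routing through \cite{HilsSkstab} is the cleaner path and also covers the maximal $C^*$-algebra.
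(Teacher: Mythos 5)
Your proposal is correct and follows essentially the same route as the paper: a local section of $\gA\cG$ whose anchor is non-vanishing and transverse to the submanifold, the induced one-dimensional foliation and the Hilsum--Skandalis identity $C_0(U\setminus X)C^*(U,\cF)=C^*(U,\cF)$, the non-degenerate action of $C^*(U,\cF)$ on the module $C^*(\cG^U)$, and a partition-of-unity globalisation. The only difference is that you spell out the final passage from $\overline{C_0(Y\setminus X)C^*(\cG)}=C^*(\cG)$ to the identification with $C^*(\cG_{Y\setminus X}^{Y\setminus X})$, which the paper leaves implicit.
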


\begin{proposition}\label{fullsymbfullsymb}
We assume that $\Gamma $ is amenable and that $V$ is $\gA G$-small. \\Then, the inclusions $j_\Sigma:\Sigma_{\wDNC_+}(G,\Gamma)\to \Sigma_{\DNC_+}(G,\Gamma)$, $j_\Psi: \Psi^*(\wDNC_+(G,\Gamma))\to \Psi^*(\DNC_+(G,\Gamma)) $ and $j_{symb}:  C_0(\bS^*\gA(\wDNC_+(G,\Gamma)))\to C_0(\bS^*\gA({\DNC_+}(G,\Gamma)))$ are $KK$-equivalences. 
\begin{proof}
We have  a diagram
$$\xymatrix{&&&0\ar[d]\\
0\ar[r] &C^*(\wDNC_+(G,\Gamma))\ar[r]\ar[d]^{j}&\Psi^*(\wDNC_+(G,\Gamma))\ar[r]\ar[d]^{j_\Psi}& C_0(\bS^*\gA(\wDNC_+(G,\Gamma)) \ar[r]\ar[d]^{j_{symb}}&0\\
0\ar[r] &C^*({\DNC_+}(G,\Gamma))\ar[r] &\Psi^*(\DNC_+(G,\Gamma))\ar[r] &C_0(\bS^*\gA({\DNC_+}(G,\Gamma))\ar[r]\ar[d]&0\\
&&&C_0(\bS^*\gA G_{|V}\times \R_+)\ar[d]\\
&&&0}$$
As $j$ is an equality, we find an exact sequence $$0\longrightarrow\Psi^*(\wDNC_+(G,\Gamma))\overset{j_\Psi}{\longrightarrow}\Psi^*({\DNC_+}(G,\Gamma))\overset{}{\longrightarrow} C_0(\bS^*\gA G_{|V}\times \R_+)\longrightarrow0.$$
As $j': C^*(\rG\times \R_+^*)\to C^*(G\times \R_+^*)$ is also an equality, we find (using diagram (\ref{diagram2})) an exact sequence $$0\longrightarrow\Sigma_{\wDNC_+}(G,\Gamma))\overset{j_\Sigma}{\longrightarrow}\Sigma_{\DNC_+}(G,V)\overset{}{\longrightarrow} C_0(\bS^*\gA G_{|V}\times \R_+)\longrightarrow0.$$
 As the algebra $C_0(\bS^*\gA G_{|V}\times \R_+)$ is contractible, we deduce that $j_{symb}$ and then $j_\Psi$ and $j_\Sigma$ are $KK$-equivalences.
\end{proof}
\end{proposition}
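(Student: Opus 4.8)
The plan is to derive everything from one observation: because $V$ is $\gA G$-small, the two ``ambient'' groupoid $C^*$-algebras occurring here coincide \emph{on the nose} with those of the non-blown-up groupoid, so the whole discrepancy between the $\wDNC_+$ and the $\DNC_+$ picture is concentrated over the auxiliary set $V\times\R_+$, where it is visibly contractible.

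First I would check that $\gA G$-smallness propagates through the deformation and invoke Proposition \ref{propstab}. The algebroid of $\DNC_+(G,\Gamma)$ is $\DNC_+(\gA G,\gA\Gamma)$ with anchor $\DNC(\natural)$, which over $M\times\R_+^*$ is $\natural\times\id$ and over $N_V^M\times\{0\}$ is the anchor of $\cN_\Gamma^G$; in either case, at every point of $V\times\R_+$ the image of $\gA$ is nonzero modulo the normal direction of $V\times\R_+$ inside $\DNC_+(M,V)$, precisely because $\natural_x:\gA G_x\to T_xM$ is nonzero modulo $T_xV$ for $x\in V$. Since $\wDNC_+(M,V)=\DNC_+(M,V)\setminus(V\times\R_+)$, Proposition \ref{propstab} (with $\cG=\DNC_+(G,\Gamma)$ and $X=V\times\R_+$) gives that $j$, the middle vertical map of diagram (\ref{diagram1}), is an isomorphism; applied instead to $G\times\R_+^*\rightrightarrows M\times\R_+^*$ with $X=V\times\R_+^*$ it shows that the left vertical map $j':C^*(G_{\rM}^{\rM}\times\R_+^*)\to C^*(G\times\R_+^*)$ of diagram (\ref{diagram2}) is also an isomorphism.

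Next I would feed this into the commuting square of pseudodifferential extensions $0\to C^*(-)\to\Psi^*(-)\to C_0(\bS^*\gA(-))\to0$ attached to the inclusion $\wDNC_+(G,\Gamma)\hookrightarrow\DNC_+(G,\Gamma)$. Since $\wDNC_+(M,V)$ is open in $\DNC_+(M,V)$ with closed complement $V\times\R_+$, the map $j_{symb}$ is the inclusion of an ideal whose quotient is $C_0(\bS^*\gA G_{|V}\times\R_+)$; as $j$ is an isomorphism, a diagram chase produces the short exact sequence
$$0\longrightarrow\Psi^*(\wDNC_+(G,\Gamma))\overset{j_\Psi}{\longrightarrow}\Psi^*(\DNC_+(G,\Gamma))\longrightarrow C_0(\bS^*\gA G_{|V}\times\R_+)\longrightarrow0,$$
and then, using diagram (\ref{diagram2}) together with $j'$ an isomorphism,
$$0\longrightarrow\Sigma_{\wDNC_+}(G,\Gamma)\overset{j_\Sigma}{\longrightarrow}\Sigma_{\DNC_+}(G,\Gamma)\longrightarrow C_0(\bS^*\gA G_{|V}\times\R_+)\longrightarrow0.$$
Finally $C_0(\bS^*\gA G_{|V}\times\R_+)\cong C_0(\bS^*\gA G_{|V})\otimes C_0(\R_+)$ is contractible — the translations $(\alpha_t f)(x)=f(x+t)$, $t\in[0,+\infty]$, homotope $\id$ to $0$ on $C_0(\R_+)$ — hence $KK$-trivial, and an injection with $KK$-trivial cokernel is a $KK$-equivalence. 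Thus $j_{symb}$, $j_\Psi$ and $j_\Sigma$ are all $KK$-equivalences.

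The only step demanding genuine care is the first: pinning down the anchor (and algebroid) of the normal groupoid $\cN_\Gamma^G$ on the $t=0$ slice accurately enough to see that ``$\gA G$-small'' really verifies the hypothesis of Proposition \ref{propstab} along $V\times\{0\}$. Everything after that is formal — semi-split extensions, diagram chases, and the contractibility of a cone-like abelian $C^*$-algebra.
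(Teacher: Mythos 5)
Your proposal is correct and follows essentially the same route as the paper: establish that $j$ and $j'$ are isomorphisms via Proposition \ref{propstab} (the paper does this in the paragraph preceding the proposition, you re-derive it by checking the anchor condition along $V\times\R_+$ in the deformation space), then chase the two commuting diagrams of semi-split extensions to exhibit $C_0(\bS^*\gA G_{|V}\times\R_+)$ as the common cokernel, and conclude from its contractibility. No gaps.
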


\bigskip
As a summary of these considerations, we find:

\begin{theorem}\label{DNC=SBlup}
Let $G\rightrightarrows M$ be a Lie groupoid and $\Gamma\rightrightarrows V$ a Lie subgroupoid of $G$. 
Assume that $\Gamma $ is amenable and put $\rM=M\setminus V$.
Let $i:C^*(G_\rM^\rM)\to C^*(G)$ be the inclusion. Put $\hat\beta''=j''_*(\beta'')\in KK^1(C^*(\cS N_\Gamma^G),C^*(\cN_\Gamma^G))$ and $\hat\beta_\Sigma=(j_\Sigma)_*(\beta_\Sigma)\in KK^1(\Sigma_{\SBlup}(G,\Gamma),\Sigma_{\DNC_+}(G,V))$.
\begin{enumerate}
%\item We have Connes-Thom elements $\hat \beta'\in KK(C^*(G_{\rM}^{\rM}),C^*(G))$, $\hat\beta''\in KK^1(C^*(\cS N_\Gamma^G),C^*(\cN_\Gamma^G))$ and $\hat \beta_\Sigma \in KK^1(\Sigma_{\SBlup}(G,\Gamma),\Sigma_{\DNC_+}(G,V))$.
\item We have equalities \begin{itemize}
\item $\partial_{\SBlup}^{G,\Gamma} \otimes[i]=\hat\beta''\otimes\partial _{\DNC_+}^{G,\Gamma}\in KK^1(C^*(\cS N_\Gamma^G),C^*(G))$ and
\item  $\wind_{\SBlup}^{G,\Gamma} \otimes [i]=\hat\beta_\Sigma\otimes\wind _{\DNC_+}^{G,\Gamma}\in KK^1(C^*(\Sigma_{\SBlup}(G,\Gamma),C^*(G))$

\end{itemize}
\item If $V$ is $\gA G$-small, then $i$ is an isomorphism and the elements $\hat\beta''$ and $\hat\beta_\Sigma$ are invertible. \end{enumerate}
\end{theorem}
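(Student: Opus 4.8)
The plan is to obtain part~(1) by concatenating the $KK$-identities already established in section~\ref{section:DNCversusBlup}, and part~(2) from Propositions~\ref{propstab}, \ref{fullsymbfullsymb} and~\ref{betasigma} together with the Connes--Thom invertibility statements of section~\ref{saison}; no new geometric input is needed, only careful bookkeeping.

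For the first equality in~(1), I would start from Fact~\ref{fact1}, namely $\partial_{\SBlup}^{G,\Gamma}\otimes\beta'=-\beta''\otimes\partial_{\wDNC_+}^{G,\Gamma}$, and compose on the right with $[j']$. By Fact~\ref{fact2} one has $\partial_{\wDNC_+}^{G,\Gamma}\otimes[j']=[j'']\otimes\partial_{\DNC_+}^{G,\Gamma}$, so, since $\hat\beta''=\beta''\otimes[j'']$, this yields $\partial_{\SBlup}^{G,\Gamma}\otimes(\beta'\otimes[j'])=-\hat\beta''\otimes\partial_{\DNC_+}^{G,\Gamma}$. The remaining point is to recognise $\beta'\otimes[j']$ as $[i]$: the gauge action of $\R_+^*$ on the open saturated piece $G_{\rM}^{\rM}\times\R_+^*$ of $\wDNC_+(G,\Gamma)$ is translation on the $\R_+^*$-factor, so $C^*(G_{\rM}^{\rM}\times\R_+^*)=C^*(G_{\rM}^{\rM})\otimes C_0(\R_+^*)$ with $\beta'$ the exterior product of $[\mathrm{id}]$ with the fundamental class of the half-line; since $j'=i\otimes\mathrm{id}$ and $C^*(G\times\R_+^*)=C^*(G)\otimes C_0(\R_+^*)$, naturality of the exterior product identifies $\beta'\otimes[j']$ with the image of $[i]$ under the canonical isomorphism $KK(C^*(G_{\rM}^{\rM}),C^*(G))\cong KK^1(C^*(G_{\rM}^{\rM}),C^*(G\times\R_+^*))$. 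Transporting the same isomorphism to the target of $\partial_{\DNC_+}^{G,\Gamma}$ — which is exactly the identification under which the right-hand side of the theorem is read in $KK^1(C^*(\cS N_\Gamma^G),C^*(G))$ — and cancelling the invertible exterior factor gives the first equality. The second equality follows identically, with Facts~\ref{fact7} and~\ref{fact8} in place of Facts~\ref{fact1} and~\ref{fact2}, the $\beta$-square of Proposition~\ref{betasigma} in place of that of Proposition~\ref{etK1}, and $\hat\beta_\Sigma=(j_\Sigma)_*(\beta_\Sigma)$ in place of $\hat\beta''$.

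For part~(2), assume $V$ is $\gA G$-small. Then the inclusion $i$ is an isomorphism by Proposition~\ref{propstab}, whose hypothesis is precisely $\gA G$-smallness. The element $\beta''$ is the Connes--Thom connecting element of the free, (locally) proper gauge action of $\R_+^*$ on $\rcN_\Gamma^G$ (with quotient $\cS N_\Gamma^G=\rcN_\Gamma^G/\R_+^*$), hence a $KK$-equivalence by the proposition of section~\ref{saison}; and $j''$ is an isomorphism because, under $\gA G$-smallness, $V\times\{0\}$ meets every orbit of $\DNC(G,\Gamma)$ in a null set, so $C^*(\rcN_\Gamma^G)=C^*(\cN_\Gamma^G)$ by Proposition~\ref{propstab} again (this is the observation recorded after Definition~\ref{notation6}). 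Hence $\hat\beta''=\beta''\otimes[j'']$ is invertible. Similarly $\beta_\Sigma$ is invertible by Proposition~\ref{betasigma} and $j_\Sigma$ is a $KK$-equivalence by Proposition~\ref{fullsymbfullsymb} (again using $\gA G$-smallness), so $\hat\beta_\Sigma=\beta_\Sigma\otimes[j_\Sigma]$ is invertible.

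I expect the only genuinely delicate point to be the bookkeeping around the identification $\beta'\otimes[j']=[i]$ and the accompanying suspension/degree shift between the targets $C^*(G\times\R_+^*)$ and $C^*(G)$ (and, in the symbol version, between $\Sigma_{\wDNC_+}(G,\Gamma)$ and $\Sigma_{\DNC_+}(G,\Gamma)$): one must orient the deformation parameter consistently so that the composite ladder is honestly $KK$-commutative and the signs produced by Proposition~\ref{etK1} cancel against those of Facts~\ref{fact1} and~\ref{fact7}, leaving the stated sign-free equalities. Everything else is a mechanical concatenation of $KK$-equalities already in hand, and once part~(1) is granted the content of part~(2) is immediate.
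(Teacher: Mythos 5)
Your proposal is correct and follows essentially the same route as the paper, whose own ``proof'' is precisely the concatenation of Facts \ref{fact1}, \ref{fact2}, \ref{fact7}, \ref{fact8} with Proposition \ref{propstab}, the observation following Definition \ref{notation6}, Proposition \ref{betasigma} and Proposition \ref{fullsymbfullsymb}. The one point you rightly flag --- identifying $\beta'\otimes[j']$ with $[i]$ under the suspension $C^*(G\times\R_+^*)\simeq C^*(G)\otimes C_0(\R)$ and checking that the sign from Fact \ref{fact1} (inherited from Proposition \ref{etK1}) is absorbed by the orientation convention in that identification --- is exactly the detail the paper leaves implicit, and your treatment of it is, if anything, more explicit than the original.
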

%With the above notation, we put $\hat\beta'=j'_*(\beta')\otimes [Bott]^{-1}$, $\hat\beta''=j''_*(\beta'')$ and $\hat\beta_\Sigma=(j_\Sigma)_*(\beta_\Sigma)$ where $[Bott]\in KK^1(C^*(G),C^*(G\times \R_+^*))$ is the Bott element. 
\hfill$\square$

\subsection{The KK-element associated with DNC}

The connecting element $\partial_{\DNC_+}^{G,\Gamma }$ can be expressed in the following way: let  $\cG$ be the restriction of $\DNC(G,\Gamma)$ to $[0,1]$, \ie $\cG=\DNC_{[0,1]}(G,\Gamma)=\cN_\Gamma^G\times \{0\}\cup G\times (0,1]$. We have a semi-split exact sequence:  $$0\to C^*(G\times (0,1])\to C^*(\cG)\overset{\ev_0}\longrightarrow C^*(\cN_\Gamma^G)\to 0 \ .$$ As $C^*(G\times (0,1])$ is contractible, $\ev_0$ is a $KK$-equivalence. Let  $ev_1:C^*(\cG)\to C^*(G)$ be evaluation at $1$ and let $\delta_\Gamma^G=[ev_0]^{-1}\otimes [ev_1]\in KK(C^*(\cN_\Gamma^G),C^*(G))$.   Let $[Bott]\in KK^1(\C, C_0(\R_+^*))$ be the Bott element. We find  

\begin{fact}\label{fact3}
$\partial_{\DNC_+}^{G,\Gamma}=\delta_\Gamma^G \underset{\C}{\otimes} [Bott]$.
\end{fact}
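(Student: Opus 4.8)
\textbf{Proof strategy for Fact \ref{fact3}.}

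The plan is to reduce the identity $\partial_{\DNC_+}^{G,\Gamma}=\delta_\Gamma^G\underset{\C}{\otimes}[Bott]$ to a comparison of two deformation groupoids over the parameter intervals $\R_+$ and $[0,1]$, using the general functoriality of connecting maps from Lemma \ref{elabet}. First I would record the two relevant short exact sequences: the one defining $\partial_{\DNC_+}^{G,\Gamma}$,
$$0\longrightarrow C^*(G\times\R_+^*)\longrightarrow C^*(\DNC_+(G,\Gamma))\overset{\ev_0}{\longrightarrow} C^*(\cN_\Gamma^G)\longrightarrow 0,$$
and the one over $[0,1]$ with $\cG=\DNC_{[0,1]}(G,\Gamma)$,
$$0\longrightarrow C^*(G\times(0,1])\longrightarrow C^*(\cG)\overset{\ev_0}{\longrightarrow} C^*(\cN_\Gamma^G)\longrightarrow 0.$$
The inclusion $[0,1]\hookrightarrow\R_+$ as a closed saturated subset of the unit space induces a morphism of the second exact sequence into the first, realizing $\cG$ as the restriction of $\DNC_+(G,\Gamma)$ to $M\times[0,1]$. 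This gives a commutative diagram of semi-split exact sequences with identity on the quotient $C^*(\cN_\Gamma^G)$, whose ideals are $C^*(G\times(0,1])$ mapping into $C^*(G\times\R_+^*)$.

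Next I would invoke Lemma \ref{elabet}(a) applied to this morphism of extensions: since the quotient map is the identity, it yields $\partial^{[0,1]}\otimes[\iota_J]=\partial_{\DNC_+}^{G,\Gamma}$, where $\partial^{[0,1]}\in KK^1(C^*(\cN_\Gamma^G),C^*(G\times(0,1]))$ is the connecting element of the $[0,1]$-sequence and $\iota_J:C^*(G\times(0,1])\to C^*(G\times\R_+^*)$ is the inclusion. Now the key computation is twofold. On the one hand, because $C^*(G\times(0,1])$ is contractible, $\ev_0$ in the $[0,1]$-sequence is a $KK$-equivalence; the standard mapping-cone description of the connecting map (as in the proof of Lemma \ref{elabet}(a)) identifies $\partial^{[0,1]}$ with the composition $[\ev_0]^{-1}\otimes[\ev_1]\otimes\partial_0$, where $\partial_0\in KK^1(C^*(G),C^*(G\times(0,1]))$ is the connecting element for $0\to C^*(G\times(0,1])\to C^*(G\times[0,1])\to C^*(G)\to 0$; since $C^*(G\times(0,1])=C^*(G)\otimes C_0((0,1])$ with $C^*(G\times[0,1])=C^*(G)\otimes C([0,1])$ being the cone, $\partial_0$ is $[\id_{C^*(G)}]\otimes[\text{boundary of }0\to C_0((0,1])\to C([0,1])\to\C\to 0]$. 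On the other hand, identifying $C^*(G\times\R_+^*)=C^*(G)\otimes C_0(\R_+^*)$, the composite $[\ev_1]\otimes\partial_0\otimes[\iota_J]$ becomes $\delta_\Gamma^G$ tensored with the image in $KK^1(\C,C_0(\R_+^*))$ of the cone boundary of $C_0((0,1])\hookrightarrow C([0,1])$ under the inclusion $(0,1]\hookrightarrow\R_+^*$ (via $t\mapsto -\log t$ or any orientation-preserving identification), and this image is exactly the Bott element $[Bott]$.

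The main obstacle I expect is bookkeeping the orientations and making sure the cone-boundary element of $0\to C_0((0,1])\to C([0,1])\to\C\to 0$, transported along the chosen homeomorphism $(0,1]\simeq\R_+^*\simeq\R$, really is $[Bott]$ and not its inverse — i.e. getting the sign right. This is the same kind of orientation subtlety flagged in the proof of Proposition \ref{etK1}. Once the identification of the generator of $KK^1(\C,C_0(\R_+^*))$ is pinned down (using that $[\ev_0]$ is invertible so that no extra correction appears), the Kasparov-product associativity and the definition $\delta_\Gamma^G=[\ev_0]^{-1}\otimes[\ev_1]$ give the stated formula $\partial_{\DNC_+}^{G,\Gamma}=\delta_\Gamma^G\underset{\C}{\otimes}[Bott]$ directly.
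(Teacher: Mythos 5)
Your overall instinct --- compare the $\R_+$-deformation with its restriction $\cG=\DNC_{[0,1]}(G,\Gamma)$ --- is the right one, but the mechanism you propose breaks down at the very first step. Since $M\times[0,1]$ is a \emph{closed} (not open) saturated subset of $M\times\R_+$, the only natural map between the two groupoid $C^*$-algebras is the restriction epimorphism $C^*(\DNC_+(G,\Gamma))\to C^*(\cG)$; there is no morphism of extensions ``from the $[0,1]$-sequence into the $\R_+$-sequence'', because the ideal map you need, $C^*(G\times(0,1])\to C^*(G\times\R_+^*)$, does not exist ($(0,1]$ is not open in $\R_+^*$, so extension by zero is discontinuous at $t=1$). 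Worse, every $KK$-element you route through $C^*(G\times(0,1])$ is automatically zero, since that algebra is contractible: the connecting element $\partial^{[0,1]}$ of the $[0,1]$-sequence vanishes, the connecting element of $0\to C^*(G)\otimes C_0((0,1])\to C^*(G)\otimes C([0,1])\to C^*(G)\to 0$ vanishes, and the ``cone boundary'' of $0\to C_0((0,1])\to C([0,1])\to\C\to 0$ lives in $KK^1(\C,C_0((0,1]))=0$, so no push-forward of it can ever be $[Bott]$. A factorization $\partial_{\DNC_+}^{G,\Gamma}=\partial^{[0,1]}\otimes[\iota_J]$ would force the left-hand side to vanish, which is not what you are trying to prove. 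The orientation issue you flag at the end is real but secondary; the argument never gets far enough for it to matter.

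The missing idea is that the comparison must use \emph{both} evaluations simultaneously, precisely so that the intermediate ideal is the non-contractible suspension $C^*(G\times(0,1))$ rather than the cone $C^*(G\times(0,1])$. This is the device used in the proof of Proposition \ref{indGW}: consider the semi-split extension $0\to C^*(G\times(0,1))\to C^*(\cG)\overset{\ev_0\oplus\,\ev_1}{\longrightarrow}C^*(\cN_\Gamma^G)\oplus C^*(G)\to 0$. Its connecting element is $\partial_0\oplus\partial_1$, where $\partial_0$ is the connecting element of the restriction to $[0,1)$ --- that is, $\partial_{\DNC_+}^{G,\Gamma}$ after the identification $([0,1),(0,1))\simeq(\R_+,\R_+^*)$ --- and $\partial_1=[\id_{C^*(G)}]\otimes[Bott]$ is that of the restriction to $(0,1]$. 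The general relation $[q]\otimes\partial=0$ for a semi-split extension with quotient map $q$ then gives $[\ev_0]\otimes\partial_0+[\ev_1]\otimes\partial_1=0$, hence $\partial_{\DNC_+}^{G,\Gamma}=-[\ev_0]^{-1}\otimes[\ev_1]\otimes[Bott]=\delta_\Gamma^G\otimes[Bott]$ up to the orientation convention for $[Bott]$. Your write-up needs to be rebuilt around this step (or an equivalent mapping-cone argument that never factors through a contractible ideal).
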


\medskip Consider now the groupoid $\cG_{ad}^{[0,1]}$. It is a family of groupoids indexed by $[0,1]\times [0,1]$:\begin{itemize}
\item  its restriction to $\{s\}\times [0,1]$ for $s\ne 0$ is $G_{ad}^{[0,1]}$;
\item  its restriction to $\{0\}\times [0,1]$ is $(\cN_{\Gamma}^G)_{ad}^{[0,1]}$;
\item  its restriction to $ [0,1]\times \{s\}$ for $s\ne 0$ is $\cG=\DNC_{[0,1]}(G,\Gamma)$;
\item  its restriction to $ [0,1]\times \{0\}$ is the algebroid $\gA \cG=\DNC_{[0,1]}(\gA G,\gA \Gamma)$ of $\cG$.
\end{itemize}
For every locally closed subset $X\subset [0,1]\times [0,1]$, denote by $\cG_{ad}^X$ the restriction of $\cG_{ad}^{[0,1]}$ to $X$.

For every closed subset $X\subset [0,1]\times [0,1]$, denote by $q_X:C^*(\cG_{ad}^{[0,1]})\to C^*(\cG^{X}_{ad})$ the restriction map.

 We thus have the following commutative diagram:

$$\xymatrix{ C^*(\cN_{\Gamma}^G) \ar@/^1  pc/@{.>}[rr]^{\delta_\Gamma^G} & \DNC_{[0,1]}(G,\Gamma) \ar[r] \ar [l]& C^*(G)\\
 C^*((\cN_{\Gamma}^G)_{ad}^{[0,1]}) \ar[d]\ar[u] &C^*(\cG_{ad}^{[0,1]})\ar[ld]^{q_{(0,0)}} \ar[lu]_{q_{(0,1)}} \ar[ru]^{q_{(1,1)}} \ar[rd]^{q_{(1,0)}}\ar[r]^{q_{\{1\}\times [0,1]}}\ar[d]^{q_{[0,1]\times \{0\}}} \ar[l]_{q_{\{0\}\times [0,1]}} \ar[u]_{q_{[0,1]\times \{1\}}}& C^*(G_{ad}^{[0,1]}) \ar[u] \ar[d] \\
C^*(N_{\gA \Gamma}^{\gA G}) \ar@/^3pc/@{.>}[uu]^{\ind_{\cN_{\Gamma}^G}} \ar@/_1pc/@{.>}[rr]_{\delta_{\gA \Gamma}^{\gA G}} &C^*(\DNC_{[0,1]}(\gA G,\gA \Gamma))\ar[l] \ar[r]&C_0(\gA^* G) \ar@/_3 pc/@{.>}[uu]_{\ind_{G}}\\
}$$

For every locally closed subset $T\subset [0,1]$, the $C^*$-algebras $C^*(\cG_{ad}^{(0,1]\times T})$ and $C^*(\cG_{ad}^{T\times (0,1]})$ are null homotopic as well as $C^*(\cG_{ad}^{[0,1]^2\setminus \{0,0)\}})$. It follows that $q_{\{0\}\times [0,1]}$, $q_{[0,1]\times \{0\}}$ and $q_{\{(0,0)\}}$ are $KK$-equivalences.

Now $[q_{(0,0)}]^{-1}\otimes [q_{(0,1)}]=\ind_{\cN_{\Gamma}^G}$ and it follows that $[q_{(0,0)}]^{-1}\otimes [q_{(1,1)}]=\ind_{\cN_{\Gamma}^G}\otimes \delta_\Gamma^G$.

In the same way, $[q_{(0,0)}]^{-1}\otimes [q_{(1,0)}]=\delta_{\gA \Gamma}^{\gA G}$ and it follows that $[q_{(0,0)}]^{-1}\otimes [q_{(1,1)}]= \delta_{\gA \Gamma}^{\gA G}\otimes\ind_{G}$.

Finally, it follows from example \ref{uge} that $\delta_{\gA \Gamma}^{\gA G}$ is associated with a morphism $\varphi:C_0(\gA^*(\cN_V^G))\hookrightarrow C_0(\gA^* G)$ corresponding to an inclusion of $\gA^*(\cN_\Gamma^G)$ in $\gA^* G$ as a tubular neighborhood.

We thus have established:

\begin{fact}\label{fact4}
$\ind_{\cN_{\Gamma}^G}\otimes \delta_\Gamma^G=[\varphi]\otimes\ind_{G}$.
\end{fact}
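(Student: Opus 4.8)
The plan is to exploit the ``big'' groupoid $\cG_{ad}^{[0,1]}$ of the excerpt, which is a family over the square $[0,1]^2$, and to run the standard deformation--groupoid argument twice, along two adjacent edges of the square, so that the two resulting factorizations of the \emph{same} $KK$-class $[q_{(0,0)}]^{-1}\otimes[q_{(1,1)}]$ can be equated. Concretely, I would first record the four corner restrictions: $q_{(0,0)}$ lands in $C^*(N_{\gA\Gamma}^{\gA G})=C_0(\gA^*(\cN_\Gamma^G))$, $q_{(0,1)}$ in $C^*((\cN_\Gamma^G)_{ad}^{[0,1]}(1))=C^*(\cN_\Gamma^G)$, $q_{(1,0)}$ in $C_0(\gA^*G)$, and $q_{(1,1)}$ in $C^*(G)$. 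As noted in the excerpt, $C^*(\cG_{ad}^{(0,1]\times T})$, $C^*(\cG_{ad}^{T\times(0,1]})$ and $C^*(\cG_{ad}^{[0,1]^2\setminus\{(0,0)\}})$ are all null-homotopic, so $q_{\{0\}\times[0,1]}$, $q_{[0,1]\times\{0\}}$ and $q_{\{(0,0)\}}$ are $KK$-equivalences; in particular $[q_{(0,0)}]$ is invertible.

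Next I would compute the diagonal class $[q_{(0,0)}]^{-1}\otimes[q_{(1,1)}]$ in two ways. Restricting $\cG_{ad}^{[0,1]}$ to $\{0\}\times[0,1]$ gives the adiabatic groupoid $(\cN_\Gamma^G)_{ad}^{[0,1]}$, whose evaluation maps realize $\ind_{\cN_\Gamma^G}$; since $q_{(0,1)}$ factors through $q_{\{0\}\times[0,1]}$ we get $[q_{(0,0)}]^{-1}\otimes[q_{(0,1)}]=\ind_{\cN_\Gamma^G}$. Restricting to $\{1\}\times[0,1]$ gives $\cG=\DNC_{[0,1]}(G,\Gamma)$, whose excision/evaluation maps define $\delta_\Gamma^G$ (fact \ref{fact3}); composing, $[q_{(0,1)}]^{-1}\otimes[q_{(1,1)}]$ computes $\delta_\Gamma^G$ via the horizontal edge $[0,1]\times\{1\}$, so altogether $[q_{(0,0)}]^{-1}\otimes[q_{(1,1)}]=\ind_{\cN_\Gamma^G}\otimes\delta_\Gamma^G$. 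Running the symmetric argument along the \emph{other} pair of edges --- restrict to $[0,1]\times\{0\}$ to get $\gA\cG=\DNC_{[0,1]}(\gA G,\gA\Gamma)$ realizing $\delta_{\gA\Gamma}^{\gA G}$, then restrict to $\{1\}\times[0,1]$, i.e.\ $G_{ad}^{[0,1]}$, realizing $\ind_G$ --- gives $[q_{(0,0)}]^{-1}\otimes[q_{(1,1)}]=\delta_{\gA\Gamma}^{\gA G}\otimes\ind_G$. All of this is just repeated application of Lemma \ref{elabet} (naturality of connecting maps) to the evident commuting squares of evaluation morphisms inside the big diagram, using that the relevant kernels are contractible.

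Equating the two expressions yields $\ind_{\cN_\Gamma^G}\otimes\delta_\Gamma^G=\delta_{\gA\Gamma}^{\gA G}\otimes\ind_G$. To finish, I would identify $\delta_{\gA\Gamma}^{\gA G}$ with the class of the inclusion morphism $\varphi$: the groupoids $\gA G$ and $\gA\Gamma$ are vector bundles (with zero anchor), so by Example \ref{uge} the deformation $\DNC(\gA G,\gA\Gamma)$ is, after a tubular-neighborhood identification, a product-type deformation whose associated connecting element $\delta_{\gA\Gamma}^{\gA G}$ is simply $[\varphi]$, where $\varphi:C_0(\gA^*(\cN_\Gamma^G))\hookrightarrow C_0(\gA^*G)$ corresponds to the inclusion of $\gA^*(\cN_\Gamma^G)$ as a tubular neighborhood of the zero section in $\gA^*G$ (dualizing the bundle inclusion $N_{\gA\Gamma}^{\gA G}\hookrightarrow \gA G$). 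Substituting $\delta_{\gA\Gamma}^{\gA G}=[\varphi]$ gives the claimed identity $\ind_{\cN_\Gamma^G}\otimes\delta_\Gamma^G=[\varphi]\otimes\ind_G$.

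The main obstacle is bookkeeping rather than depth: one must check that every square of restriction morphisms in the big tetrahedral diagram commutes strictly (not just up to homotopy), that the various ``off-corner'' restrictions really are null-homotopic (using contractibility of $C^*$-algebras of groupoids of the form $H\times(0,1]$, which holds because the evaluation at an endpoint of a half-open interval is a homotopy equivalence), and that the two routes from $(0,0)$ to $(1,1)$ genuinely produce the same composite $KK$-class --- this is where associativity of the Kasparov product and care about orientations of the deformation parameters matters. The identification $\delta_{\gA\Gamma}^{\gA G}=[\varphi]$ also requires being slightly careful that the tubular-neighborhood choice is compatible with the dual-bundle identifications, but this is exactly the content already spelled out in Section \ref{uge}.
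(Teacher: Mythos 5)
Your proposal is correct and follows essentially the same route as the paper: the paper's proof is precisely the two-way factorization of $[q_{(0,0)}]^{-1}\otimes[q_{(1,1)}]$ over the square $[0,1]^2$ using the contractibility of the off-corner restrictions of $C^*(\cG_{ad}^{[0,1]})$, followed by the identification $\delta_{\gA\Gamma}^{\gA G}=[\varphi]$ via Example \ref{uge}. The only blemish is a momentary swap of the two coordinates of the square (you first attribute $\cG=\DNC_{[0,1]}(G,\Gamma)$ to the edge $\{1\}\times[0,1]$ before correctly naming it $[0,1]\times\{1\}$), which does not affect the argument.
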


%\todo{finir}

\subsection{The case of a submanifold of the space of units}\label{section:suiteexacte}

Let $G$ be a Lie groupoid with objects $M$ and let $\Gamma=V\subset M$ be a closed submanifold of $M$. In this section, we push further the computations the connecting maps and indices \ie the connecting maps of the exact sequences $E^\partial_{\SBlup},\ E^\partial_{\DNC_+},\ E^{\wind}_{\SBlup}$ and $E^{\wind }_{\DNC_+}$.

\subsubsection{Connecting map and index map}

From propositions \ref{ConnectMapAbstract}, \ref{Sigma=cone}, \ref{avita} and fact \ref{fact4}, we find

\begin{proposition}\label{ComputeDNC}
\begin{enumerate}
\item The index element $\ind_{\cN_V^G}\in KK(C_0(\gA^* N_V^G),C^*(\cN_V^G))$ is invertible.
\item The inclusion $j:\Sigma_{N_V^M\times\{0\}}(\DNC_+(G,V))\hookrightarrow \Sigma_{\DNC_+}(G,V)$ is invertible in $KK$-theory.
\item The $C^*$-algebra $\Sigma_{\DNC_+}(G,V)$ is naturally $KK^1$-equivalent with the mapping cone of the map $\chi:C_0(\gA^*G\times \R_+^*)\to C_0(\DNC_+(M,V))$ defined by $\chi(f)(x)= \begin{cases}
f(x,0)& \hbox{if\ } x\in M\times \R_+^*\\0& \hbox{if\ } x\in N_V^M.
\end{cases}$
\item \label{noiseuse}The connecting element $\partial_{\DNC_+}^{G,V}\in KK^1(C^*(\cN_V^G),C^*(G\times \R_+^*))=KK(C^*(\cN_V^G),C^*(G))$ is  $\delta_V^G=\ind_{\cN_{V}^G}^{-1}\otimes [\varphi]\otimes\ind_{G}$ where $\varphi:C_0(\gA^* N_V^G)\to C_0(\gA^* G)$ is the inclusion using the tubular neighborhood construction.
\item \label{ComputeDNCIndex}Under the $KK^1$ equivalence of c), the full index element $$\wind _{\DNC_+}^{G,V}\in KK^1(\Sigma_{\DNC_+}(G,V),C^*(G\times \R_+^*))=KK^1(\Cn_\chi,C^*(G))$$ is   $q^*([\Bott]\underset{\C}{\otimes} \ind_G)$ where $q:\Cn_\chi\to C_0(\gA^*G\times \R_+^*)$ is evaluation at $0$.\hfill $\square$
\end{enumerate}

\end{proposition}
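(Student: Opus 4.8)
The plan is to derive all five statements of Proposition~\ref{ComputeDNC} by specializing the abstract machinery of Section~\ref{ne} to the Lie groupoid $\DNC_+(G,V)\rightrightarrows\DNC_+(M,V)$, together with the open saturated subset $W=M\times\R_+^*$ and its closed complement $F=N_V^M\times\{0\}$. First I would record the basic structural fact: since $\Gamma=V$ is a submanifold of the unit space, the restriction of $\DNC_+(G,V)$ to $F$ is exactly the vector bundle groupoid $\cN_V^G=\cN_{r^N,s^N}(N_V^G)$ described in \ref{subsecsubmorita}, which by the Thom--Connes isomorphism (Proposition~\ref{thomconnesthom}) has invertible index element $\ind_{\cN_V^G}\in KK(C_0(\gA^*N_V^G),C^*(\cN_V^G))$. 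This is statement (a), and it is precisely the hypothesis ``$\ind_{G_F}$ invertible'' needed to run Lemmas~\ref{lemme3.5}, \ref{Sigma=cone} and Proposition~\ref{avita}.

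Next, statement (b) is the conclusion of Proposition~\ref{avita}.(1) (equivalently Lemma~\ref{lemme3.5}.\ref{lemme3.5b}) applied verbatim: $j_\sigma:\Sigma_F(\DNC_+(G,V))\to\Sigma^W(\DNC_+(G,V))=\Sigma_{\DNC_+}(G,V)$ is a $KK$-equivalence. For statement (c), I would invoke Proposition~\ref{Sigma=cone}: $\Sigma_F(\DNC_+(G,V))\cong C_0(\bS^*_F\gA)$ is $KK^1$-equivalent to the mapping cone of the inclusion $C_0(\ronde B^*\gA(G\times\R_+^*))\hookrightarrow C_0(B_F^*\gA(\DNC_+(G,V)))$, which by part (2) of that proposition is homotopy equivalent to the mapping cone of the structural map $k$; unwinding $k$ in the present situation gives exactly $\chi:C_0(\gA^*G\times\R_+^*)\to C_0(\DNC_+(M,V))$ (the homotopy equivalences $C_0(\ronde B^*\gA)\simeq C_0(\gA^*)$ on the open part and $C_0(B_F^*\gA)\simeq C_0(\DNC_+(M,V))$ on the base collapse the ball bundle to its zero section), so composing with $[j_\sigma]^{-1}$ from (b) yields the asserted $KK^1$-equivalence $\Sigma_{\DNC_+}(G,V)\sim_{KK^1}\Cn_\chi$.

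For statement (d): by Proposition~\ref{ConnectMapAbstract}, $\partial_{\DNC_+(G,V)}^W=\ind_{G_F}^{-1}\otimes\partial_{\gA}^W\otimes\ind_{G_W}$ where here $G_W=G\times\R_+^*$, so $\ind_{G_W}=\ind_G\otimes[\mathrm{Bott}]$ and the whole element is $[\mathrm{Bott}]$-suspended; stripping the Bott factor (Fact~\ref{fact3}) reduces the claim to $\delta_V^G=\ind_{\cN_V^G}^{-1}\otimes\partial_{\gA}^W\otimes\ind_G$. Then Fact~\ref{fact4} identifies $\ind_{\cN_V^G}\otimes\delta_V^G=[\varphi]\otimes\ind_G$ with $\varphi$ the tubular-neighborhood inclusion $C_0(\gA^*N_V^G)\hookrightarrow C_0(\gA^*G)$, and since $\partial_{\gA}^W$ is by Proposition~\ref{ConnectMapAbstract} (via example~\ref{uge}) exactly $[\varphi]$, the two expressions match after cancelling the invertible $\ind_{\cN_V^G}$. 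Finally statement (e) is Proposition~\ref{avita}.(2): $\wind_{full}^W=[j_\sigma]^{-1}\otimes\partial\otimes\ind_{G_W}$ where $\partial$ is the connecting element of the abelian sequence $0\to C_0(\gA^*G\times\R_+^*)\to\Psi_F^*(\gA)\to\Sigma_F(\gA)\to 0$; under the mapping-cone identification of (c), $\partial$ becomes the class $u\in KK^1$ associated to the inclusion of the suspension in $\Cn_\chi$, i.e.\ precisely the pullback along $q:\Cn_\chi\to C_0(\gA^*G\times\R_+^*)$, and $\ind_{G_W}=[\mathrm{Bott}]\otimes\ind_G$, giving $\wind_{\DNC_+}^{G,V}=q^*([\mathrm{Bott}]\otimes_\C\ind_G)$.

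The routine part is bookkeeping the various tubular-neighborhood and homotopy identifications so that the abstract maps $k$, $\chi$, $\partial_{\gA}^W$, $\varphi$ all literally coincide with the concrete maps named in the statement; the one genuine point requiring care is the $KK^1$-equivalence in (c) — one must check that collapsing both ball bundles to their zero sections is compatible with the mapping-cone construction and that the resulting cone is $\Cn_\chi$ and not $\Cn_\chi$ with a shift or a sign, which is what pins down the correct normalization for (e). Everything else is a direct assembly of Propositions~\ref{ConnectMapAbstract}, \ref{Sigma=cone}, \ref{avita}, \ref{thomconnesthom} and Facts~\ref{fact3}, \ref{fact4}, all already established.
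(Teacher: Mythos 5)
Your proposal is correct and follows essentially the same route as the paper, which derives Proposition \ref{ComputeDNC} precisely by specializing Propositions \ref{ConnectMapAbstract}, \ref{Sigma=cone}, \ref{avita}, \ref{thomconnesthom} and Facts \ref{fact3}, \ref{fact4} to the groupoid $\DNC_+(G,V)$ with $W=M\times\R_+^*$ and $F=N_V^M\times\{0\}$. Your assembly of the ingredients (including the identification of the abstract map $k$ with $\chi$ and the Bott/normalization bookkeeping you flag) is exactly the intended argument, and is in fact more detailed than what the paper records.
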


The element $[\chi]\in KK(C_0(\gA^*G\times \R_+^*),C_0(\DNC_+(M,V)))$ is the Kasparov product of the ``Euler element" of the bundle $\gA^*G$ which is the class in $KK(C_0(\gA^*G),C_0(M))=KK(C_0(\gA^*G\times \R_+^*),C_0(M\times \R_+^*))$ of the map $x\mapsto (x,0)$ with the inclusion $C_0(M\times \R_+^*)\to C_0(\DNC_+(M,V)) $. It follows that $[\chi]$ is often the zero element of $KK(C_0(\gA^*G\times \R_+^*),C_0(\DNC_+(M,V)))$. In particular, this is the case when the Euler class of the bundle $\gA^*G$ vanishes. In that case, the algebra $\Sigma_{\DNC_+}(G,V)$ is $KK$-equivalent to $C_0(\gA^*G)\oplus C_0(\DNC_+(M,V))$. %\todo{Essayer de relier cela au calcul BM classique}

If $V$ is $\gA G$ small, then, by theorem \ref{DNC=SBlup},  $\partial_{\SBlup}^{G,V}$ and $\wind _{\SBlup}^{G,V}$ are immediately deduced from proposition \ref{ComputeDNC}.

\begin{remark} 
Let $M_b$ be a manifold with boundary and  $V=\partial M_b$. Put $\rM=M_b\setminus V$. Let $G$ be a  piece of  Lie groupoid on $M_b$ in the sense of section \ref{BoundLiePoid}. Thus $G$ is the restriction of a Lie groupoid $\widetilde G \rightrightarrows M$, where $M$ is a neighborhood of $M_b$. Recall that in this situation, $\SBlup(M,V)=M_b \sqcup M_-$, where $M=M_b\cup M_-$ and $M\cap M_-=V$, and we let $\SBlup_{r,s}(G,V) \rightrightarrows M_b$ be the restriction of $\SBlup_{r,s}(\widetilde G,V)$ to $M_b$.

\smallskip Let us denote by $\rcN_{V}^{G}$ the open subset of $N_V^{\widetilde G}$ made of (normal) tangent vectors whose image under the differential of the source and range maps of $\widetilde G$ are non vanishing elements of $N_V^M$ pointing in the direction of $M_b$.  The groupoid $\SBlup_{r,s}(G,V)$  is the union $\rcN_{V}^{G}/\R_+^* \cup G_\rM^\rM$.

\smallskip
We have exact sequences $$0\to C^*(G_\rM^\rM)\to C^*(\SBlup_{r,s}(G,V))\to C^*(\rcN_{V}^{G}/\R_+^*)\to 0$$
$$0\to C^*(G_\rM^\rM)\to \Psi^*(\SBlup_{r,s}(G,V))\to \Sigma_{\SBlup}(G,V)\to 0.$$

As $V$ is of codimension 1, we find that $V$ is $\gA \widetilde G$-small if and only if it is transverse to $\widetilde G$. In that case, Proposition \ref{ComputeDNC} computes the $KK$-theory of $C^*(\rcN_{V}^{G}/\R_+^*)$ and of $\Sigma_{\SBlup}(G,V)$ and the $KK$-class of the connecting maps of these exact sequences.

In particular, we obtain a six term exact sequence $$\xymatrix{K_0(C(M_b))\ar[r] &K_0(\Sigma_{\SBlup}(G,V))\ar[r]&  K_1(C_0(\gA^*G_\rM^\rM))\ar[d]^{\chi}\\
 K_0(C_0(\gA^*G_\rM^\rM))\ar[u]^{\chi}&K_1(\Sigma_{\SBlup}(G,V))\ar[l]&K_0(C(M_b))\ar[l]}$$
and the index map $K_*(\Sigma_{\SBlup}(G,V))\to K_{*+1}(G_\rM^\rM)$ is the composition of $K_*(\Sigma_{\SBlup}(G,V))\to K_{*+1}(C_0(\gA^*G_\rM^\rM))$ with the index map of the groupoid $G_\rM^\rM$.

This holds, in particular, if $G=M_b\times M_b$ since the boundary $V=\partial M_b$ is transverse to $\widetilde G= M\times  M$. Note that in that case, $\chi=0$ (in $KK(C_0(T^*\rM),C_0(M_b))$) so that we obtain a (noncanonically) split short exact sequence: $$\xymatrix{0\ar[r]& K_*(C_0(M_b))\ar[r] &K_*(\Sigma_{\SBlup}(G,V))\ar[r]&  K_{*+1}(C_0(\gA^*G_\rM^\rM))\ar[r]&0.}$$
\end{remark}

\subsubsection{The index map via relative $K$-theory}

It follows now from prop. \ref{indrel}:

\begin{proposition} 
Let $\psi_{\DNC}:C_0(\DNC_+(M,V))\to \Psi^*(\DNC_+(G,V))$ be the inclusion map which associates to a (smooth) function $f$ the order $0$ (pseudo)differential operator multiplication by $f$ and $\sigma_{full}:\Psi^*(\DNC_+(G,V))\to \Sigma_{\DNC_+}(G,V)$ the full symbol map. Put $\mu_{\DNC}=\sigma_{full}\circ {\psi_{\DNC}}$. Then the relative $K$-group $K_*(\mu_{\DNC})$  is naturally isomorphic to $K_{*+1}(C_0(\gA^* G))$. Under this isomorphism, $\ind_{rel}:K_*(\mu_{\DNC})\to K_*(C^*(G\times \R_+^*))=K_{*+1}(C^*(G))$ identifies with  $\ind_{G}$. \hfill$\square$
\end{proposition}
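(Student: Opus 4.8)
The statement is a direct application of Proposition \ref{indrel} to the groupoid $\DNC_+(G,V)\rightrightarrows \DNC_+(M,V)$, together with the identifications of $C^*$-algebras already established in this section. First I would recall the setup of section \ref{gueule}: inside $\DNC_+(M,V)$ the subset $W=M\times\R_+^*$ is open and saturated (for the action of $\DNC_+(G,V)$), with complement the closed saturated subset $F=N_V^M\times\{0\}$. The restriction of $\DNC_+(G,V)$ to $W$ is $G\times\R_+^*$, and its restriction to $F$ is $\cN_V^G$, which is amenable (it is a $\cV\cB$ groupoid). The morphism $\mu_{\DNC}=\sigma_{full}\circ\psi_{\DNC}$ is precisely the morphism called $\mu$ in the subsection ``Relative $K$-theory and full index'', for the groupoid $\DNC_+(G,V)$ and this choice of $W$.

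Next I would check the hypothesis of Proposition \ref{indrel}, namely that the index element $\ind_{G_F}=\ind_{\cN_V^G}\in KK(C_0((\gA^*\DNC_+(G,V))_{|F}),C^*(\cN_V^G))$ is invertible. This is exactly Proposition \ref{ComputeDNC}.1 (which follows from the Thom--Connes isomorphism of Proposition \ref{thomconnesthom}, since $\cN_V^G$ is a vector bundle groupoid over $V$, or more precisely a $\cV\cB$ groupoid whose index is a $KK$-equivalence). With this hypothesis in hand, Proposition \ref{indrel} gives directly that $K_*(\mu_{\DNC})$ is naturally isomorphic to $K_*(C_0(\gA^*(G\times\R_+^*)_{|W}))=K_*(C_0(\gA^*G\times\R_+^*))$ and that under this isomorphism $\ind_{rel}$ identifies with $\ind_{G\times\R_+^*}$, the index map of the groupoid $G\times\R_+^*=(\DNC_+(G,V))_{|W}$.

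Finally I would translate these two conclusions through the Bott periodicity in the $\R_+^*$-variable. The algebroid of $G\times\R_+^*$ is $\gA G\times\R_+^*$, so $C_0(\gA^*(G\times\R_+^*))=C_0(\gA^*G)\otimes C_0(\R_+^*)$, and the Bott isomorphism gives $K_*(C_0(\gA^*G\times\R_+^*))\cong K_{*+1}(C_0(\gA^*G))$; likewise $C^*(G\times\R_+^*)=C^*(G)\otimes C_0(\R_+^*)$ gives $K_*(C^*(G\times\R_+^*))\cong K_{*+1}(C^*(G))$. Moreover $\ind_{G\times\R_+^*}=\ind_G\otimes\id_{C_0(\R_+^*)}$ by naturality of the adiabatic construction under direct products (the adiabatic groupoid of $G\times\R_+^*$ is $G_{ad}\times\R_+^*$), so under these Bott isomorphisms $\ind_{G\times\R_+^*}$ becomes $\ind_G$. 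Combining this with the previous paragraph yields $K_*(\mu_{\DNC})\cong K_{*+1}(C_0(\gA^*G))$ with $\ind_{rel}$ identified with $\ind_G$, which is the claim.

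\textbf{Main obstacle.} The conceptual content is entirely in the already-proved Proposition \ref{indrel} and Proposition \ref{ComputeDNC}.1; the only real work is bookkeeping, namely being careful that all the identifications ($\Sigma_{\DNC_+}(G,V)=\Sigma^{M\times\R_+^*}(\DNC_+(G,V))$, $C^*(G_\rM^\rM)$ replaced by $C^*(G\times\R_+^*)$ with $\rM$ there being $W=M\times\R_+^*$, the tensoring-by-$C_0(\R_+^*)$ and the compatible Bott isomorphisms on source and target) are applied consistently, and that the naturality of $\ind$ under the product with $\R_+^*$ is invoked correctly. I do not expect any genuine difficulty beyond this.
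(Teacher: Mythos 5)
Your proposal is correct and follows exactly the paper's route: the paper's proof consists precisely of the phrase ``It follows now from prop.~\ref{indrel}'', i.e.\ applying Proposition \ref{indrel} to $\DNC_+(G,V)$ with $W=M\times\R_+^*$, $F=N_V^M\times\{0\}$, the hypothesis being supplied by Proposition \ref{ComputeDNC} (invertibility of $\ind_{\cN_V^G}$). Your explicit verification of the hypothesis and the Bott-shift bookkeeping identifying $\ind_{G\times\R_+^*}$ with $\ind_G$ are exactly the details the paper leaves implicit.
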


Let us say also just a few words on the relative index map for $\SBlup_{r,s}(G,V)$, \ie for the map $\mu_{\SBlup}:C_0(\SBlup_+(M,V))\to \Sigma_{\SBlup}(G,V)$ which is the composition of the inclusion $\psi_{\SBlup}:C_0(\SBlup (M,V)\to \Psi^*(\SBlup_{r,s}(G,V))$ with the full index map $\sigma_{full}: \Psi^*(\SBlup_{r,s}(G,V))\to \Sigma_{\SBlup}(G,V))$, and the corresponding relative index map $\ind_{rel}:K_*(\mu_{\SBlup})\to K_{*}(C^*(\rG))$. Equivalently we wish to compute the relative  index map $\ind_{rel}:K_*(\mu_{\wDNC})\to K_{*+1}(C^*(\rG))$, where ${\mu_{\wDNC}}:C_0(\wDNC_+(M,V))\to \Sigma_{\wDNC_+}(G,V)$. We restrict to the case when $V$ is $\gA G$ small.
 
We have a diagram $$
\xymatrix{0\ar[r]&C_0(\wDNC_+(M,V))\ar[r] &C_0(\DNC_+(M,V))\ar[r] &C_0(V\times \R_+)\ar[r]&0}
$$
and it follows that the inclusion $C_0(\wDNC_+(M,V))\to C_0(\DNC_+(M,V))$ is a $KK$-equivalence. 

Since the inclusions $\Psi^*(\wDNC_+(G,V))\to \Psi^*(\DNC_+(G,V))$ and $\Sigma_{\wDNC_+}(G,V)\to \Sigma_{\DNC_+}(G,V)$ are also $KK$-equivalences (prop. \ref{fullsymbfullsymb}), it follows that the inclusion $\Cn_{\mu_{\wDNC}}\to \Cn_{\mu_{\DNC}}$ is a $KK$-equivalence - and therefore the relative $K$-groups $K_*(\mu_{\wDNC})$ and $K_*(\mu_{\DNC})$ are naturally isomorphic. Using this, together with the Connes-Thom isomorphism, we deduce:

\begin{corollary}\label{CorRelBlup}
We assume that $V$ is $\gA G$ small\begin{enumerate}
\item  The relative $K$-group $K_*(\mu_{\wDNC})$  is naturally isomorphic to $K_{*+1}(C_0(\gA^* G))$. Under this isomorphism, $\ind_{rel}:K_*(\mu_{\wDNC})\to K_*(C^*(G\times \R_+^*))=K_{*+1}(C^*(G))$ identifies with  $\ind_{G}$.

\item  The relative $K$-group $K_*(\mu_{\SBlup})$  is naturally isomorphic to $K_*(C_0(\gA^* G))$. Under this isomorphism, $\ind_{rel}:K_*(\mu_{\SBlup})\to K_*(C^*(G))$ identifies with  $\ind_{G}$. \hfill$\square$
\end{enumerate}

\end{corollary}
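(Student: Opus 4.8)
The plan is to deduce both statements from the computation of $K_*(\mu_{\DNC})$ already carried out in the preceding proposition — which applies here because the index element $\ind_{\cN_V^G}$ is invertible by Proposition~\ref{ComputeDNC} — by transporting that computation along the $KK$-equivalences assembled just above and along the Connes--Thom equivalences of Section~\ref{saison}. It is convenient throughout to represent the relative index not as a morphism of $K$-groups but, as in the remark following Proposition~\ref{indrel}, as the mapping-cone element $\psi_{\Cn}^*([e]^{-1})$ attached to a morphism $\mu$ and the corresponding pseudodifferential extension; this turns the transport of $\ind_{rel}$ along $KK$-equivalences into a matter of naturality.

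\emph{Part (a).} I would first record, as announced above, that $C_0(\wDNC_+(M,V))\hookrightarrow C_0(\DNC_+(M,V))$ is a $KK$-equivalence (its cokernel $C_0(V\times\R_+)$ is contractible) and that $\Psi^*(\wDNC_+(G,V))\hookrightarrow\Psi^*(\DNC_+(G,V))$ and $\Sigma_{\wDNC_+}(G,V)\hookrightarrow\Sigma_{\DNC_+}(G,V)$ are $KK$-equivalences by Proposition~\ref{fullsymbfullsymb}; hence $\Cn_{\mu_{\wDNC}}\to\Cn_{\mu_{\DNC}}$ is a $KK$-equivalence and $K_*(\mu_{\wDNC})\cong K_*(\mu_{\DNC})\cong K_{*+1}(C_0(\gA^*G))$. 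To see that $\ind_{rel}$ is still $\ind_G$, I would use that the natural square of excision morphisms relating the $\wDNC_+$ and $\DNC_+$ pseudodifferential extensions commutes, with left vertical arrow $j':C^*(\rG\times\R_+^*)\to C^*(G\times\R_+^*)$ an \emph{isomorphism} since $V$ is $\gA G$-small (Proposition~\ref{propstab}); chasing the naturality square for $\psi_{\Cn}$ then identifies $\ind_{rel}^{\wDNC}$ with $\ind_{rel}^{\DNC}=\ind_G$.

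\emph{Part (b).} Here I would use that $\SBlup_{r,s}(G,V)=\wDNC_+(G,V)/\R_+^*$ for the (locally) proper gauge action of $\R_+^*$. The induced action on the unit space $\wDNC_+(M,V)$ is (locally) proper with quotient $\SBlup(M,V)$, so Remark~\ref{ConnesThom} provides a $KK^1$-equivalence $C_0(\SBlup(M,V))\sim C_0(\wDNC_+(M,V))$; likewise the Connes--Thom machinery of Section~\ref{saison} (in particular Proposition~\ref{betasigma}) provides $KK^1$-equivalences $\Psi^*(\SBlup_{r,s}(G,V))\sim\Psi^*(\wDNC_+(G,V))$, $\Sigma_{\SBlup}(G,V)\sim\Sigma_{\wDNC_+}(G,V)$ and $C^*(\rG)\sim C^*(\rG\times\R_+^*)$. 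By naturality of Le Gall's equivariant descent these equivalences fit into one diagram commuting in $KK$, so they intertwine the $\mu$'s and the excision morphisms; hence $\Cn_{\mu_{\SBlup}}$ is $KK^1$-equivalent to $\Cn_{\mu_{\wDNC}}$ and $\ind_{rel}^{\SBlup}$ corresponds to $\ind_{rel}^{\wDNC}$. Feeding in part (a) and tracking degrees, $K_*(\mu_{\SBlup})\cong K_{*+1}(\mu_{\wDNC})\cong K_{*+2}(C_0(\gA^*G))=K_*(C_0(\gA^*G))$ by Bott periodicity, the shift on the source and the shift on the target $C^*(\rG)\sim C^*(\rG\times\R_+^*)$ cancelling, so that $\ind_{rel}$ becomes $\ind_G:K_*(C_0(\gA^*G))\to K_*(C^*(\rG))=K_*(C^*(G))$, the last equality again by $\gA G$-smallness.

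The hardest point is not the $K$-group arithmetic but checking that the relative \emph{index maps}, and not only the relative \emph{$K$-groups}, match up: one must verify that the Connes--Thom equivalences at the four relevant levels — the algebra of the unit space, $\Psi^*$, the full symbol algebra $\Sigma$, and the ideal $C^*(\rG)$ — assemble into a single diagram commuting in $KK$, equivalently that the elements $\psi_{\Cn}^*([e]^{-1})$ for $\SBlup_{r,s}(G,V)$ and for $\wDNC_+(G,V)$ are intertwined. This is a naturality statement for the equivariant descent already invoked in Section~\ref{saison}, but now applied simultaneously to the whole pseudodifferential extension rather than to a single short exact sequence; pinning it down precisely is where the real work lies.
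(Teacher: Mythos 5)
Your proof is correct and follows essentially the same route as the paper: the authors also deduce the corollary by showing that the three inclusions ($C_0(\wDNC_+(M,V))\hookrightarrow C_0(\DNC_+(M,V))$, the $\Psi^*$'s and the $\Sigma$'s) are $KK$-equivalences, hence $\Cn_{\mu_{\wDNC}}\to\Cn_{\mu_{\DNC}}$ is one, and then transport the computation of $K_*(\mu_{\DNC})$ via the Connes--Thom isomorphisms of Section~\ref{saison}. Your closing remark about checking that the Connes--Thom equivalences intertwine the relative index \emph{maps} and not just the groups is exactly the point the paper leaves implicit.
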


\section{A Boutet de Monvel type calculus}

From now on, we suppose that \(V\) is a \emph{transverse submanifold} of $M$ with respect to the Lie groupoid \(G\rightrightarrows M\). In particular $V$ is $\gA G$-small  - of course, we assume that (in every connected component of $V$), the dimension of $V$ is strictly smaller than the dimension of $M$.

\subsection{The \PT bimodule}

As $V$ is transverse to $G$, the groupoid $G_V^V$ is a Lie groupoid, so that we can construct its ``gauge adiabatic groupoid'' $(G_V^V)_{ga}$ (see section \ref{subsecgag}).

In \cite{DS1}, we constructed a bi-module relating the $C^*$-algebra of the groupoid $(G_V^V)_{ga}$ and the $C^*$-algebra of pseudodifferential operators of $G_V^V$.

\medskip
In this section,
\begin{itemize}
\item 
We first show that the groupoid $(G_V^V)_{ga}$, is (sub-) Morita equivalent to $\SBlup_{r,s}(G,V)$ (\cf also section \ref{subsecsubmorita} for a local construction). 

\item Composing the resulting bimodules, we obtain the ``Poisson-trace'' bimodule relating $C^*(\SBlup_{r,s}(G,V))$ and $\Psi^*(G_V^V)$.

\end{itemize}

\subsubsection{The $\SBlup_{r,s}(G,V)-(G_V^V)_{ga}$-bimodule $\scE(G,V)$}
Define the map $j:M\sqcup (V\times \R)\to M$ by letting $j_0:M\to M$ be the identity and $j_1:V\times \R\to M$ the composition of the projection $V\times \R\to V$ with the inclusion. Let $\cG=G_j^j$. As $V$ is assumed to be transverse, the map $j$ is also transverse, and therefore $\cG$ is a Lie groupoid.

It is the union of four clopen subsets\begin{itemize}
\item the groupoids $G_{j_0}^{j_0}=G=\cG_M^M$ and $G_{j_1}^{j_1}=G_V^V\times (\R\times\R)=\cG_{V\times \R}^{V\times \R}$.
\item the \emph{linking spaces} $G_{j_1}^{j_0}=\cG_{V\times \R}^M=G_{V}\times \R $ and $G_{j_0}^{j_1}=\cG^{V\times \R}_M=G^{V}\times \R $.
\end{itemize}

By functoriality, we obtain a sub-Morita equivalence of $\SBlup_{r,s}(G_V^V\times \R\times \R,V)$ and $\SBlup_{r,s}(G,V)$ (see section \ref{Moreq2}).\\ Let us describe this sub-Morita equivalence in a slightly different way:

Let also $\Gamma =V\times \{0,1\}^2$, sitting in $\cG$: 
\[\begin{array}{cc} V\times \{(0,0)\}\subset G=G_{j_0}^{j_0} \ ;  &\ V\times \{(0,1)\}\subset G_V\times \{ 0\} \subset G_{j_1}^{j_0}\ ; 
\\  V\times \{(1,0)\}\subset G^V\times \{ 0\} \subset G^{j_1}_{j_0}\ ;\  & \ V\times \{(1,1)\}\subset G_V^V\times \{(0,0)\} \subset G_{j_1}^{j_1} \ . \end{array}\]

It is a subgroupoid of $\cG$. The blowup construction applied to $\Gamma\subset \cG$ gives then a groupoid $\SBlup_{r,s}(\cG,\Gamma)$ which is the union of:
\[\begin{array}{cc} \SBlup_{r,s}(G,V)  \ ;  &\ \SBlup_{r,s}(G_V\times \R,V)\ ; 
\\  \SBlup_{r,s}(G^V\times \R,V)\ ;\  & \ \SBlup_{r,s}(G_V^V\times \R\times \R,V) \ . \end{array}\]

 Recall that $\SBlup (V\times \R,V\times\{0\})\simeq V\times (\R_-\sqcup \R_+)$. Thus $\SBlup_{r,s}(\cG,\Gamma)$ is a groupoid with objects $\SBlup(M,V)\sqcup V\times \R_-\sqcup V\times \R_+$.\\

The restriction of $\SBlup_{r,s}(\cG,\Gamma)$ to $V\times \R_+$ coincides with the restriction of $\SBlup_{r,s}(G_V^V\times \R\times \R,V)$ to $V\times \R_+$: it is the gauge adiabatic $(G_V^V)_{ga}$ groupoid of $G_V^V$ (\cf section \ref{subsecgag}).

\medskip Put $\SBlup_{r,s}(G_V\times \R,V)_+=\SBlup_{r,s}(\cG,\Gamma)_{V\times \R_+}^{\SBlup(M,V)}$. It is a linking space between the groupoids $\SBlup_{r,s}(G,V)$ and $(G_V^V)_{ga}$. Put also  $\SBlup_{r,s}(G^V\times \R,V)_+=\SBlup_{r,s}(\cG,\Gamma)^{V\times \R_+}_{\SBlup(M,V)}$.

\medskip With the notation used in fact \ref{otum}, we define the $C^*(\SBlup_{r,s}(G,V))-C^*((G_V^V)_{ga})$-bimodule $\scE(G,V)$ to be $C^*(\SBlup_{r,s}(G_V\times \R,V)_+)$.  It is the closure of $C_c(\SBlup_{r,s}(G_V\times \R,V)_+)$ in $C^*(\SBlup_{r,s}(\cG,\Gamma))$. It is a full Hilbert-$C^*(\SBlup_{r,s}(G,V))-C^*((G_V^V)_{ga})$-module.

The Hilbert-$C^*((G_V^V)_{ga})$-module $\scE(G,V)$ is full and $\cK(\scE(G,V))$ is the ideal $C^*(\SBlup_{r,s}(G_\Omega^\Omega,V))$ where $\Omega=r(G_V)$  is the union of orbits which meet $V$.

\smallskip Notice that $\Omega=M\setminus V\sqcup V\times \R^*$ and $F=\bS N_V^M \sqcup V\sqcup V$ gives a partition by respectively open and closed satured subsets of the units of  $\SBlup_{r,s}(\cG,\Gamma)$. Furthermore $\SBlup_{r,s}(\cG,\Gamma)_\Omega^\Omega=\cG_\Omega^\Omega$ and $C^*(\cG_\Omega^\Omega)=C^*(\cG)$ according to proposition \ref{propstab}. This decomposition gives rise to an exact sequence of C$^*$-algebras.

$$\xymatrix{0\ar[r] &C^*(\cG)\ar[r] &C^*(\SBlup_{r,s}(\cG,\Gamma))\ar[r] & C^*(\cS N_{\Gamma}^{\cG})\ar[r] &0  }$$

This exact sequence gives rise to an exact sequence of bimodules:

$$\xymatrix{0\ar[r] &C^*(G)\ar[r]\ar@{-}[d]^{\ronde \scE(G,V)}&C^*(\SBlup_{r,s}(G,V))\ar[r]\ar@{-}[d]^{\scE(G,V)}& C^*(\cS N_{V}^{G})\ar[r]\ar@{-}[d]^{\scE^{\partial}(G,\Gamma)}&0 \\
0\ar[r] &C^*(G_{V\times \R_+^*}^{V\times \R_+^*})\ar[r]&C^*((G_V^V)_{ga})\ar[r]& C^*(\gA G_V^V\rtimes \R_+^*)\ar[r]&0  }$$

where $\ronde \scE(G,V)=C^*(\cG^{M\setminus V}_{V\times \R_+^*})$ and $\scE^{\partial}(G,\Gamma)=C^*\big((\cS N_\Gamma^\cG)^{\bS N_V^M}_V\big)=\scE(G,V)/\ronde \scE(G,V)$.

\subsubsection{The \PT bimodule $\Ept$} 

In \cite{DS1}, we constructed, for every Lie groupoid $H$ a $C^*(H_{ga})-\Psi^*(H)$-bimodule $\scE_H$. \\ Recall that the Hilbert $\Psi^*(H)$-module $\scE_H$ is full and that $\cK(\scE _H)\subset C^*(H_{ga})$ is the kernel of a natural $*$-homomorphism $C^*(H_{ga})\to C_0(H^{(0)}\times \R)$. We also showed that the bimodule $\scE _H$ gives rise to an exact sequence of bimodule as above:  

$$\xymatrix{0\ar[r] &C^*(H\times \R_+^* \times \R_+^*)\ar[r]\ar@{-}[d]^{\ronde \scE _H}&C^*(H_{ga})\ar[r]\ar@{-}[d]^{\scE _H}& C^*(\gA H \rtimes \R_+^*)\ar[r]\ar@{-}[d]^{\scE ^{\partial}_H}&0 \\
0\ar[r] & C^*(H) \ar[r]& \Psi^*(H) \ar[r]& C_0(\bS^* \gA H) \ar[r]&0  }$$

Putting together the bimodule $\scE (G,V)$ and $\scE _{G_V^V}$ we obtain a $C^*(\SBlup_{r,s}(G,V))-\Psi^*(G_V^V)$ bimodule $\scE (G,V)\otimes_{C^*((G_V^V)_{ga})}\scE _{G_V^V}$ that we call the \emph{\PT} bimodule and denote by $\Ept(G,V)$ - or just $\Ept$. It leads to the exact sequence of bimodule:

$$\xymatrix{0\ar[r] &C^*(G)\ar[r]\ar@{-}[d]^{\ronde \Ept(G,V)}&C^*(\SBlup_{r,s}(G,V))\ar[r]\ar@{-}[d]^{\Ept(G,V)}& C^*(\cS N_{V}^{G})\ar[r]\ar@{-}[d]^{\Ept^{\partial}(G,V)}&0 \\
0\ar[r] & C^*(G_V^V) \ar[r]& \Psi^*(G_V^V) \ar[r]& C_0(\bS^* \gA G_V^V) \ar[r]&0    }$$

The \PT bimodule  is a full Hilbert $\Psi^*(G_V^V)$-module and $\cK(\Ept(G,V))$ is a two sided ideal of $C^*(\SBlup_{r,s}(G,V))$. Denote by $\Ept (G,V)^*$  its dual module, \ie the $\Psi^*(G_V^V)-C^*(\SBlup_{r,s}(G,V))$-bimodule $\cK(\Ept(G,V),\Psi^*(G_V^V))$.

\subsection{A  \BMT\ algebra}

The $C^*$-algebra $C^*_{BM}(G,V)=\cK\Big(C^*(\SBlup_{r,s}(G,V)) \oplus \Ept(G,V)^* \Big)$ is an algebra made of matrices of the form $
\begin{pmatrix} K&P\\
 T&Q
\end{pmatrix}
$ where $K\in C^*(\SBlup_{r,s}(G,V)),\ P\in \Ept(G,V),\ T\in \Ept(G,V)^*,\ Q\in \Psi^*(G_V^V)$.

We have an exact sequence (where $\rM\sqcup V\ne M$ denotes the topological disjoint union of $\mathring{M}$ with $V$):  $$0\to C^*(G_{\rM\sqcup V}^{\rM\sqcup V})\to C^*_{BM}(G,V)\overset{r_V^{C^*}}\longrightarrow \boundarysymb \to 0,$$
where the quotient $\boundarysymb$ is the algebra of the \emph{Boutet de Monvel type boundary symbols.} It is the algebra of matrices of the form $
\begin{pmatrix} k&p\\
 t&q
\end{pmatrix}
$ where $k\in C^*(\cS N_V^G)$, $q\in C(\bS^*\gA G_V^V)$, $p,t^*\in \Eptb(G,V):= \Ept(G,V)\otimes_{\Psi^*(G_V^V)}C(\bS^*\gA G_V^V)$. The map $r_V^{C^*}$ is of the form $$r_V^{C^*}\begin{pmatrix} K&P\\
 T&Q
\end{pmatrix}=\begin{pmatrix} r_V^{\Green}(K)&r_V^{\fish}(P)\\
 r_V^{\trace}(T)&\sigma_V(Q)
\end{pmatrix}$$
where:
\begin{itemize}
\item the quotient map $\sigma_V$ is the ordinary order $0$ principal symbol map on the groupoid $G_V^V$;

\item the quotient maps $r_V^{\Green},r_V^{\fish},r_V^{\trace}$ are restrictions to the boundary $N_V^M$: $$r_V^{\Green}:C^*(\SBlup_{r,s}(G,V))\to C^*(\bS N_V^G)=C^*(\SBlup_{r,s}(G,V))/C^*(G^{\rM}_{\rM}),$$ $$r_V^{\fish}:\Ept(G,V)\to \Eptb(G,V)=\Ept(G,V)/C^*(G^{\rM}_V),$$ and $r_V^{\trace}(T)=r_V^{\fish}(T^*)^*$.
\end{itemize}

The map $r_V^{C^*}$ is called the \emph{zero order symbol map of the Boutet de Monvel type calculus.}

\subsection{A  \BMT\  pseudodifferential algebra}

We denote by $\Psi^*_{BM}(G,V)$ the algebra of matrices $\begin{pmatrix}
 \Phi&P\\
 T&Q
\end{pmatrix}$ with $\Phi\in \Psi^*(\SBlup_{r,s}(G,V)),\ P\in \Ept({G,V}),\ T\in \Ept({G,V})^*$ and $Q\in \Psi^*(G_V^V)$.

Such an operator $R=\begin{pmatrix}
 \Phi&P\\
 T&Q
\end{pmatrix}$ has two symbols:\begin{itemize}
\item  the \emph{classical symbol} $\sigma_c:\Psi^*_{BM}(G,V)\to C_0(\bS^*\gA\SBlup_{r,s}(G,V))$ given by $\sigma_c\begin{pmatrix}
 \Phi&P\\
 T&Q
\end{pmatrix}=\sigma_c(\Phi)$;
\item  the \emph{boundary symbol} $r_V^{BM}:\Psi^*_{BM}(G,V)\to \Boundarysymb$ defined by $$r_V\begin{pmatrix} \Phi&P\\
 T&Q
\end{pmatrix}=\begin{pmatrix} r_V^{\pdo}(\Phi)&r_V^{\fish}(P)\\
 r_V^{\trace}(T)&\sigma_V(Q)
\end{pmatrix}$$
where $r_V^{\pdo}:\Psi^*(\SBlup_{r,s}(G,V))\to \Psi^*(\cS N_V^G)$ is the restriction.
\end{itemize}

Here $\Boundarysymb$ denotes the algebra of matrices of the form $\begin{pmatrix}
\phi&p\\
t&q
\end{pmatrix}$ with $\phi\in \Psi^*(\cS N_V^G)$, $p,t^*\in \Ept^V(G,V)$ and $q\in C(\bS^*\gA G_V^V)$.

\medskip The \emph{full symbol} map is the morphism  $$\sigma_{BM}:\Psi^*_{BM}(G,V)\to  \Sigma_{BM}(G,V):=C_0(\bS^*\gA\SBlup_{r,s}(G,V))\times _{C_0(\bS^*\gA \cS N_V^G)}\Boundarysymb$$ defined by $\sigma_{BM}(R)=(\sigma_c(R),r_V(R))$.

\medskip We have an exact sequence:
$$0\to C^*(G_{\rM\sqcup V}^{\rM\sqcup V})\to \Psi^*_{BM}(G,V)\overset{\sigma_{BM}}\longrightarrow \Sigma_{BM}(G,V) \to 0.\eqno{\EBM}$$

We may note that $\Psi^*(\SBlup_{r,s}(G,V))$ (\resp $\Psi^*(\cS N_V^G)$) identifies with the full hereditary subalgebra of $\Psi^*_{BM}(G,V)$ (\resp of $\Sigma_{BM}(G,V)$) consisting of elements of the form $ \begin{pmatrix}
 x&0\\
 0&0
\end{pmatrix} $.

\subsection{$K$-theory of the symbol algebras and index maps}

In this section we examine the index map corresponding to the Boutet de Monvel type calculus and in particular to the exact sequence $\EBM$. We compute the $K$-theory of the symbol algebra $\Sigma_{BM}$ and the connecting element $\wind_{BM}\in KK^1(\Sigma_{BM},C^*(G))$ (\footnote{We use the Morita equivalence of $C^*(G)$ with $C^*(G_{\rM\sqcup V}^{\rM\sqcup V})$.}).

We then extend this computation by including bundles into the picture \ie by computing a relative $K$-theory map.

\subsubsection{$K$-theory of $\Sigma_{BM}$ and computation of the index}

As the Hilbert $\Psi^*(G_V^V)$ module $\Ept({G,V})$ is full, \begin{itemize}
\item the subalgebra $\Big\{ 
\begin{pmatrix}
 K&0\\
 0&0
\end{pmatrix};\ K\in C^*(\SBlup_{r,s}(G,V))
 \Big\}$ is a full hereditary subalgebra of $C^*_{BM}(G,V)$;
 
\item the subalgebra $\Big\{ 
\begin{pmatrix}
 \Phi&0\\
 0&0
\end{pmatrix};\ \Phi\in \Psi^*(\SBlup_{r,s}(G,V))
 \Big\}$ is a full hereditary subalgebra of $\Psi^*_{BM}(G,V)$;
 
 \item the subalgebra $\Big\{ 
\begin{pmatrix}
 x&0\\
 0&0
\end{pmatrix};\ x\in \Sigma_{\SBlup}(G,V)
 \Big\}$ is a full hereditary subalgebra of $\Sigma_{BM}(G,V)$;

\item the subalgebra $\Big\{ 
\begin{pmatrix}
 k&0\\
 0&0
\end{pmatrix};\ k\in C^*(\cS N_V^G)
 \Big\}$ is a full hereditary subalgebra of $\boundarysymb$;

 \item the subalgebra $\Big\{ 
\begin{pmatrix}
\phi&0\\
 0&0
\end{pmatrix};\ \phi\in \Psi^*(\cS N_V^G)
 \Big\}$ is a full hereditary subalgebra of $\Boundarysymb$.
 \end{itemize}

We have a diagram of exact sequences where the vertical inclusions are Morita equivalences:
$$\xymatrix{0\ar[r] &C^*(G_{\rM}^{\rM})\ar[r]\ar@{^{(}->}[d]&\Psi^*(\SBlup_{r,s}(G,V))\ar[r]^{ \sigma_{full}}\ar@{^{(}->}[d]& \Sigma_{\SBlup}(G,V)\ar[r]\ar@{^{(}->}[d]&0\\
0\ar[r] &C^*(G_{\rM\sqcup V}^{\rM\sqcup V})\ar[r] &\Psi^*_{BM}(G,V)\ar[r]^{\sigma_{BM}} &\Sigma_{BM}(G,V)\ar[r]&0}$$

We thus deduce immediately from theorem \ref{DNC=SBlup} and prop. \ref{ComputeDNC}:

\begin{corollary}
The algebra $\Sigma_{BM}(G,V))$ is $KK$-equivalent with the mapping cone $\Cn_\chi$ and, under this $K$-equivalence,  the index $\wind_{BM}$ is $q^*([Bott]\otimes_\C \ind_G)$ where $q:\Cn_\chi\to C_0(\gA^*G\times \R_+^*)$ is evaluation at $0$.
\end{corollary}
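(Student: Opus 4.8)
The plan is to chain the diagram displayed just above with Theorem \ref{DNC=SBlup} and Proposition \ref{ComputeDNC}, exactly as the two citations announce. First I would observe that each of the three vertical inclusions in that diagram is a full corner (resp.\ full hereditary subalgebra) embedding — this is precisely what is recorded in the list of full hereditary subalgebras of $C^*_{BM}(G,V)$, $\Psi^*_{BM}(G,V)$ and $\Sigma_{BM}(G,V)$ preceding the corollary — hence each induces a $KK$-equivalence. Applying Lemma \ref{elabet}(1) to the diagram (these are the usual semi-split pseudodifferential/index sequences) gives, at the level of connecting elements,
$$\wind_{\SBlup}^{G,V}\otimes[\iota']=[\iota'']\otimes\wind_{BM},$$
where $\iota'$ is the left vertical inclusion $C^*(G_\rM^\rM)\hookrightarrow C^*(G_{\rM\sqcup V}^{\rM\sqcup V})$, $\iota''$ the right vertical inclusion $\Sigma_{\SBlup}(G,V)\hookrightarrow\Sigma_{BM}(G,V)$, $\wind_{\SBlup}^{G,V}=\wind_{full}^\rM(\SBlup_{r,s}(G,V))$ the connecting element of the top row, and $\wind_{BM}$ (for the moment) the connecting element of $\EBM$ before the Morita identification $C^*(G_{\rM\sqcup V}^{\rM\sqcup V})\sim C^*(G)$ of the footnote. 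In particular $[\iota'']$ exhibits $\Sigma_{BM}(G,V)$ as $KK$-equivalent to $\Sigma_{\SBlup}(G,V)$ and carries $\wind_{BM}$ to $\wind_{\SBlup}^{G,V}$, up to the canonical identification of targets.

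Next I would bring in transversality. Since $V$ is a transverse submanifold it is $\gA G$-small, and $V$ viewed as its own unit groupoid is amenable; hence Theorem \ref{DNC=SBlup} applies and yields that $i\colon C^*(G_\rM^\rM)\to C^*(G)$ is an isomorphism, that $\hat\beta_\Sigma\colon\Sigma_{\SBlup}(G,V)\to\Sigma_{\DNC_+}(G,V)$ is a $KK$-equivalence, and that $\wind_{\SBlup}^{G,V}\otimes[i]=\hat\beta_\Sigma\otimes\wind_{\DNC_+}^{G,V}$. Feeding this into the previous identity, and absorbing the Morita equivalence $C^*(G)\sim C^*(G_{\rM\sqcup V}^{\rM\sqcup V})$ into the definition of $\wind_{BM}$ as in the footnote, I obtain that the composite $KK$-equivalence $[\iota'']^{-1}\otimes\hat\beta_\Sigma$ identifies $\Sigma_{BM}(G,V)$ with $\Sigma_{\DNC_+}(G,V)$ and carries $\wind_{BM}$ to $\wind_{\DNC_+}^{G,V}$.

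Finally I would quote Proposition \ref{ComputeDNC} directly: part (c) provides the natural $KK^1$-equivalence $\Sigma_{\DNC_+}(G,V)\simeq\Cn_\chi$, and part (e) says that under it $\wind_{\DNC_+}^{G,V}$ becomes $q^*([\Bott]\underset{\C}{\otimes}\ind_G)$ with $q\colon\Cn_\chi\to C_0(\gA^*G\times\R_+^*)$ the evaluation at $0$. Composing this with the equivalence of the preceding paragraph proves that $\Sigma_{BM}(G,V)$ is $KK$-equivalent to $\Cn_\chi$ and that, under this equivalence, $\wind_{BM}=q^*([\Bott]\underset{\C}{\otimes}\ind_G)$, which is the assertion. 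The whole argument is bookkeeping of $KK$-equivalences already in hand; the one spot I expect to need genuine care — and hence the main obstacle — is the compatibility of the several identifications of the \emph{target} $C^*$-algebras, i.e.\ that the chain $C^*(G_\rM^\rM)\sim C^*(G_{\rM\sqcup V}^{\rM\sqcup V})\sim C^*(G)$ of Morita equivalences agrees with the isomorphism $i$ of Theorem \ref{DNC=SBlup}. This is where $\gA G$-smallness enters a second time: it forces $V$ to have empty interior in every $G$-orbit it meets, so that $\rM$ meets all orbits of $G$ and the linking bimodules implementing the two Morita steps restrict consistently to the one implementing $i$. Once this is pinned down the chain of equalities of $KK$-classes closes with no sign or orientation ambiguity, since Theorem \ref{DNC=SBlup} and Proposition \ref{ComputeDNC} are stated with the orientations already matched.
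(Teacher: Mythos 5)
Your proposal is correct and follows essentially the same route as the paper: the paper's (one-line) proof consists precisely of the displayed diagram whose vertical inclusions are full hereditary subalgebras (hence Morita/$KK$-equivalences), combined with Theorem \ref{DNC=SBlup} (applicable since transversality implies $V$ is $\gA G$-small) and Proposition \ref{ComputeDNC}(c),(e). Your extra care about the compatibility of the target identifications $C^*(G_\rM^\rM)\sim C^*(G_{\rM\sqcup V}^{\rM\sqcup V})\sim C^*(G)$ is a legitimate point that the paper leaves implicit, but it does not change the argument.
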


\subsubsection{Index in relative $K$-theory}

One may also consider more general index problems, which are concerned with generalized boundary value problems in the sense of \cite{Schrohe, MSS1, MSS2}: those are concerned with index of \emph{fully elliptic} operators of the form 
$
R=\begin{pmatrix}
 \Phi&P\\
 T&Q
\end{pmatrix}
$, where we are given hermitian complex vector bundles $E_\pm$ over $\SBlup(M,V)$ and $F_{\pm}$ over $V$, and 
\begin{itemize}
\item $\Phi$ is an order $0$ pseudodifferential operator of the Lie groupoid $\SBlup_{r,s}(G,V)$  from sections of $E_+$ to sections of $E_-$;
\item $P$ is an order $0$ ``Poisson type'' operator from sections of $F_+$ to sections of $E_-$;
\item $T$ is an order $0$ ``trace type'' operator from sections of $E_+$ to sections of $F_-$;
\item  $Q$ is an order $0$ pseudodifferential operator of the Lie groupoid $G_V^V$ from sections of $F_+$ to sections of $F_-$.
\end{itemize}

In other words, writing $E_\pm$ as associated with projections $p_\pm\in M_N(C^\infty(\SBlup(M,V)))$ and $F_\pm$ as associated with projections $q_\pm\in M_N(C^\infty(V))$, then $R\in (p_-\oplus q_-)M_N(\Psi^*_{BM}(G,V))(p_+\oplus q_+)$. 

Full ellipticity for $R$ means just that the \emph{full symbol} of $R$ is invertible, \ie that there is a quasi-inverse $R'\in (p_+\oplus q_+)M_N(\Psi^*_{BM}(G,V))(p_-\oplus q_-)$, such that $(p_+\oplus q_+)-R'R\in M_N(C^*(G_{\rM\sqcup V}^{\rM\sqcup V}))$ and $(p_-\oplus q_-)-RR'\in M_N(C^*(G_{\rM\sqcup V}^{\rM\sqcup V}))$.

\medskip 
In other words, we wish to compute the morphism $\ind_{rel}:K_*(\mu_{BM})\to K_*(C^*_{BM}(G,V))$ where $\mu_{BM}$ is the  natural morphism $\mu_{BM}:C_0(\SBlup(M,V))\oplus C_0(V)\to \Sigma_{BM}(G,V)$.  

\medskip Let us outline here this computation. We start with a remark.

\begin{remark}
Let $H\rightrightarrows  V$ be a Lie groupoid. The bimodule $\scE ^{\partial}_H$ is a Morita equivalence of an ideal $C^*(\gA H \rtimes \R_+^*)$ with $C_0(\bS^* \gA H)$ and therefore defines an element  $\zeta_H\in KK(C_0(\bS^* \gA H),C^*(\gA H \rtimes \R_+^*))$. 
Let $\mu_H:C_0(V)\to C_0(\bS^* \gA H)$ be the inclusion (given by the map $\bS^* \gA H\to V$). The composition $\mu_H^*(\zeta_H)$  is the zero element in $KK(C_0(V),C^*(\gA H \rtimes \R_+^*))$. Indeed $\mu_H^*(\zeta_H)$ can be decomposed as\begin{itemize}
\item  the Morita equivalence $C_0(V)\sim C_0((V\times \R_+^*)\rtimes \R_+^*)$,
\item the inclusion $C_0(V\times \R_+^*)\rtimes \R_+^* \subset C_0(V\times \R_+)\rtimes \R_+^* $,
\item the inclusion $C_0(V\times \R_+)\rtimes \R_+^*\to C_0(\gA^* H) \rtimes \R_+^*$ corresponding to the map $(x,\xi)\mapsto (x,\|\xi\|)$ from $\gA^* H$ to $V\times \R_+$.
\end{itemize}
Now, the Toeplitz algebra $C_0(\R_+)\rtimes \R_+^*$ is $K$-contractible.
\end{remark}

From this remark, we immediately deduce:

\begin{proposition}
The inclusion $C_0(V)\to \Sigma_{BM}(G,V)$ is the zero element in $KK$-theory. \hfill$\square$
\end{proposition}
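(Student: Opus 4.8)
The claim to prove is that the inclusion $C_0(V)\to \Sigma_{BM}(G,V)$ is the zero element in $KK$-theory. The natural route is to trace how $C_0(V)$ sits inside $\Sigma_{BM}(G,V)$ and reduce to the preceding remark about $\mu_H^*(\zeta_H)$.

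\textbf{Plan.} The map $C_0(V)\to \Sigma_{BM}(G,V)$ factors through the lower-right corner: a function $f$ on $V$ goes to the matrix $\begin{pmatrix}0&0\\0&f\end{pmatrix}$, where $f$ is viewed inside $C(\bS^*\gA G_V^V)$ via the inclusion $C_0(V)\to C(\bS^*\gA G_V^V)$ (pullback along $\bS^*\gA G_V^V\to V$) and then inside $\Boundarysymb$ (and hence $\Sigma_{BM}(G,V)$) as the scalar symbol-of-multiplication. First I would make this factorization precise, writing $\iota: C_0(V)\to\Sigma_{BM}(G,V)$ as the composition $C_0(V)\xrightarrow{\mu_{G_V^V}} C(\bS^*\gA G_V^V)\to \Sigma_{BM}(G,V)$, where the second arrow is the full hereditary embedding as the $q$-corner of the boundary symbol algebra composed with the canonical map $\Boundarysymb\to\Sigma_{BM}(G,V)$ coming from the fibered-product description.

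\textbf{Key step.} Now apply the remark with $H=G_V^V$. The element $\zeta_H\in KK(C_0(\bS^*\gA H),C^*(\gA H\rtimes\R_+^*))$ is the class of the Morita equivalence $\scE^\partial_H$, and the remark says $\mu_H^*(\zeta_H)=0$ because it decomposes through the $K$-contractible Toeplitz algebra $C_0(\R_+)\rtimes\R_+^*$. I would argue that the composition $\iota$ factors (up to $KK$-equivalence coming from the Morita equivalences $\scE^\partial(G,V)$ and $\scE^\partial_{G_V^V}$ appearing in the two exact sequences of bimodules) through the same morphism $\mu_H$ into $\zeta_H$. Concretely: the $q$-corner $C(\bS^*\gA G_V^V)$ of the boundary symbol algebra is, via the Poisson-trace boundary bimodule $\Ept^\partial(G,V)=\scE^\partial(G,V)\otimes\scE^\partial_{G_V^V}$, Morita-compatible with the corner $C^*(\gA G_V^V\rtimes\R_+^*)\subset C^*(\gA G_V^V\rtimes\R_+^*)$, and the class of that linking bimodule restricted along $C_0(V)\to C(\bS^*\gA G_V^V)$ is exactly $\mu_H^*(\zeta_H)$ composed with further maps. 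Since the element $\zeta_H$ already dies after restriction along $\mu_H$, so does $\iota$; more carefully, one checks that the $KK$-element of $\iota$ is the Kasparov product of $\mu_{G_V^V}^*(\zeta_{G_V^V})$ with the $KK$-class of the inclusion $C^*(\gA G_V^V\rtimes\R_+^*)\hookrightarrow C^*_{BM}(G,V)$-corner and the relevant Morita equivalences, hence vanishes because the first factor vanishes.

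\textbf{Main obstacle.} The delicate point is bookkeeping: making the identification ``the inclusion $C_0(V)\to\Sigma_{BM}(G,V)$ equals (a Kasparov product having as a factor) $\mu_{G_V^V}^*(\zeta_{G_V^V})$'' genuinely rigorous, i.e.\ tracking the several Morita equivalences ($\scE(G,V)$, $\scE_{G_V^V}$, and their boundary quotients) and checking that the scalar corner $C_0(V)\subset C(\bS^*\gA G_V^V)$ is carried by these equivalences precisely to the $C_0(V)\subset C_0(\bS^*\gA H)$ of the remark. Once the diagram of bimodule exact sequences is set up — which is already in the excerpt — this is a diagram chase in $KK$, but the orientations and which corner is hereditary in which algebra need care. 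I would isolate this as a lemma: ``the composition $C_0(V)\to C(\bS^*\gA G_V^V)\to\Sigma_{BM}(G,V)$ represents $\mu_{G_V^V}^*(\zeta_{G_V^V})\otimes x$ for an appropriate $x$,'' prove it by naturality of the bimodule exact sequences, and then conclude instantly from the remark.
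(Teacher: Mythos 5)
Your proposal is correct and takes essentially the same approach as the paper, which deduces the proposition ``immediately'' from the preceding remark via exactly the factorization $C_0(V)\to C(\bS^*\gA G_V^V)\to \Sigma_{BM}(G,V)$ you describe, with the corner inclusion factoring through $\zeta_{G_V^V}$ by means of the boundary Poisson--trace bimodule. The bookkeeping lemma you isolate is precisely the content the paper leaves implicit.
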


We have a diagram 
$$\xymatrix{C_0(\SBlup(M,V))\oplus C_0(V)\ar[rr]^{\mu_{\SBlup}\oplus \mu_V}\ar@{=}[d]&&\Sigma_{\SBlup}(G,V)\oplus C_0(\bS^* \gA G_V^V)\ar[d]\\
C_0(\SBlup(M,V))\oplus C_0(V)\ar[rr]^{\hspace{1.4cm}\psi_{BM}}&&\Sigma_{BM}(G,V)
}$$

The mapping cone $\Cn_{\check \mu_{\SBlup}}$ of the morphism $\check \mu_{\SBlup}: C_0(\SBlup(M,V))\oplus 0\to \Psi^*_{BM}(G,V)$ is Morita-equivalent to the mapping cone of the morphism $\mu_{\SBlup}:C_0(\SBlup_+(M,V))\to \Sigma_{\SBlup}(G,V))$ and therefore it is $KK$-equivalent to $C_0(\gA^* G\times \R)$ by Cor. \ref{CorRelBlup}. 

We then deduce:

\begin{theorem}
\begin{enumerate}
\item The relative $K$-theory of  $\mu_{BM}$ is naturally isomorphic to $K_*(\gA^* G)\oplus K_{*+1}(C_0(V))$.
\item Under this equivalence, the relative index map identifies with  $\ind_{G}$ on $K_*(\gA^* G)$ and the zero map on $K_{*+1}(C_0(V)$.  \hfill$\square$
\end{enumerate}
\end{theorem}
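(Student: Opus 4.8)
The plan is to compute the relative group $K_*(\mu_{BM})$ by passing, as in the remark following Proposition~\ref{indrel}, to the mapping cone: one has $K_*(\mu_{BM})=K_*(\Cn_{\mu_{BM}})$, and under this identification $\ind_{rel}$ is the composition of $(\psi_{\Cn})_*$ with the inverse of the excision isomorphism attached to $0\to C^*(G_{\rM\sqcup V}^{\rM\sqcup V})\to\Psi^*_{BM}(G,V)\to\Sigma_{BM}(G,V)\to0$. The domain $C_0(\SBlup(M,V))\oplus C_0(V)$ is a direct sum, and $\mu_{BM}$ restricts to $\check\mu_{\SBlup}$ on the first summand and to the morphism $C_0(V)\to\Sigma_{BM}(G,V)$ of the Proposition above (landing in the ``$q$''-corner $C(\bS^*\gA G_V^V)$) on the second. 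The inclusion of the first summand gives a map of arrows $(\check\mu_{\SBlup})\hookrightarrow(\mu_{BM})$, hence a map $\Cn_{\check\mu_{\SBlup}}\to\Cn_{\mu_{BM}}$ and a long exact sequence of the pair whose third term is governed by $C_0(V)$ alone.

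First I would pin down the $\SBlup$-summand. Since $\Psi^*(\SBlup_{r,s}(G,V))$ (resp.\ $\Sigma_{\SBlup}(G,V)$) is one of the five full hereditary subalgebras of $\Psi^*_{BM}(G,V)$ (resp.\ $\Sigma_{BM}(G,V)$) listed above, $\check\mu_{\SBlup}$ is Morita-equivalent to $\psi_{\SBlup}\colon C_0(\SBlup(M,V))\to\Psi^*(\SBlup_{r,s}(G,V))$, and via the diagram of full hereditary inclusions used just before the preceding Corollary, $\Cn_{\check\mu_{\SBlup}}$ is Morita-equivalent to $\Cn_{\mu_{\SBlup}}$. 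By Corollary~\ref{CorRelBlup}.(2) (itself resting on Proposition~\ref{ComputeDNC} and Theorem~\ref{DNC=SBlup}), $K_*(\Cn_{\mu_{\SBlup}})\cong K_*(C_0(\gA^*G))$ and the relative index map there is $\ind_G$; Morita invariance of $K$-theory and of index maps transports this to $\check\mu_{\SBlup}$ (here one uses $C^*(\rG)\cong C^*(G)$, Proposition~\ref{propstab}, since $V$ is transverse hence $\gA G$-small).

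Next I would handle the $C_0(V)$-summand. By the Proposition above, $C_0(V)\to\Sigma_{BM}(G,V)$ is the zero element of $KK$; concretely, as shown in the Remark preceding it, it decomposes, up to Morita equivalence, through the half-line Toeplitz crossed product $C_0(\R_+)\rtimes\R_+^*$, which is $K$-contractible. Feeding the vanishing of this $KK$-class into the six-term sequence for $\Cn_{\mu_{BM}}$ and comparing with the one for $\Cn_{\check\mu_{\SBlup}}$ makes all connecting maps coming from the $C_0(V)$-factor vanish, so the map $\Cn_{\check\mu_{\SBlup}}\to\Cn_{\mu_{BM}}$ is $K$-injective with quotient (the suspension of) $C_0(V)$; this is the standard Boutet de Monvel boundary shift, and the $K$-contractibility of the Toeplitz algebra is exactly what produces it. The resulting short exact sequence
$$0\longrightarrow K_*(\Cn_{\check\mu_{\SBlup}})\longrightarrow K_*(\mu_{BM})\longrightarrow K_{*+1}(C_0(V))\longrightarrow 0$$
splits naturally, the section being the canonical copy of the suspension of $C_0(V)$ inside $\Cn_{\mu_{BM}}$ furnished by the $KK$-triviality of $C_0(V)\to\Sigma_{BM}(G,V)$. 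Assembling the two paragraphs gives $K_*(\mu_{BM})\cong K_*(C_0(\gA^*G))\oplus K_{*+1}(C_0(V))$, which is~(1); for~(2), $\ind_{rel}$ on the first summand is $\ind_G$ by the previous step, while on the $K_{*+1}(C_0(V))$-summand it factors through the $KK$-trivial map $C_0(V)\to\Sigma_{BM}(G,V)$ followed by the excision/index identification, hence is zero.

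I expect the main obstacle to be the $C_0(V)$-summand: one must verify that its contribution to $\Cn_{\mu_{BM}}$ decouples cleanly from the $\SBlup$-part, with the correct degree shift and with a genuinely natural splitting, and that it carries no index. This is precisely where the Toeplitz-algebra argument of the Remark above ($K$-contractibility of $C_0(\R_+)\rtimes\R_+^*$, through the bimodule $\scE^{\partial}$ and the element $\zeta_{G_V^V}$) has to be matched compatibly against the mapping-cone/excision description of $\ind_{rel}$. Keeping track of the Morita and Connes--Thom identifications so that the index on the principal summand is literally $\ind_G$ — rather than $\ind_G$ composed with some automorphism of $K_*(C_0(\gA^*G))$ — is the remaining bookkeeping, already available from Corollary~\ref{CorRelBlup}, Proposition~\ref{ComputeDNC} and Theorem~\ref{DNC=SBlup}; everything else (the passage to mapping cones, the Morita-invariance manipulations) is routine.
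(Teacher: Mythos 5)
Your overall strategy is the one the paper itself follows (decompose the domain, identify $\Cn_{\check\mu_{\SBlup}}$ with $\Cn_{\mu_{\SBlup}}$ by a full-hereditary/Morita argument and invoke Corollary \ref{CorRelBlup}, then use the $KK$-triviality of $C_0(V)\to\Sigma_{BM}(G,V)$ for the remaining summand), and your treatment of the $\SBlup$-summand is correct. The genuine gap is exactly where you yourself locate ``the main obstacle'', and it is not closed by what you write. Writing $\mu_{BM}=\check\mu_{\SBlup}\oplus\mu_V$ with orthogonal images, the ideal $\Cn_{\check\mu_{\SBlup}}\subset\Cn_{\mu_{BM}}$ has quotient $C_0(V)$ \emph{itself} (the quotient map is $(a,b,h)\mapsto b$), not its suspension. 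What the Toeplitz argument ($[\mu_V]=0$ in $KK$) buys you is precisely the vanishing of the connecting maps of this extension: by naturality applied to the inclusion of $0\to S\Sigma_{BM}(G,V)\to\Cn_{\mu_V}\to C_0(V)\to 0$ into $0\to\Cn_{\check\mu_{\SBlup}}\to\Cn_{\mu_{BM}}\to C_0(V)\to 0$, the boundary map $K_*(C_0(V))\to K_{*+1}(\Cn_{\check\mu_{\SBlup}})$ factors through $(\mu_V)_*$ and hence is zero. But that yields a split short exact sequence
$$0\longrightarrow K_*(\Cn_{\check\mu_{\SBlup}})\longrightarrow K_*(\mu_{BM})\longrightarrow K_*(C_0(V))\longrightarrow 0$$
with the quotient in the \emph{same} degree. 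The shift to $K_{*+1}(C_0(V))$ does not come out of this argument; your phrase ``this is the standard Boutet de Monvel boundary shift'' is an appeal to the conclusion, not a derivation of it. If the shift is to appear, it must come from a different identification of the complement (for instance through the boundary symbol algebra $\boundarysymb$ and the Connes--Thom/Toeplitz structure of $\cS N_V^G$), and that identification is exactly the missing step.

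The second half of the gap is the vanishing of $\ind_{rel}$ on the $V$-summand. This does not follow from $[\mu_V]=0$: the relative index is computed by lifting through $\psi_{BM}$, whose $V$-component is $\psi_V:C_0(V)\to\Psi^*(G_V^V)\subset\Psi^*_{BM}(G,V)$ (multiplication operators), followed by the excision identification $K_*(\Cn_{\sigma_{BM}})\simeq K_*(C^*(G_{\rM\sqcup V}^{\rM\sqcup V}))$. The splitting of the sequence above is given by a lift of $1_{C_0(V)}$ into $KK(C_0(V),\Cn_{\mu_V})$, and its image under $\psi_{\Cn}$ lands in a class built from $\psi_V$, not from $\mu_V$; one must separately prove that this class dies in $K_*(C^*(G_{\rM\sqcup V}^{\rM\sqcup V}))$. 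A sanity check makes the issue visible: for $G=M\times M$ and $V$ a point, $\bS^*\gA G_V^V=\emptyset$, $\mu_V=0$ literally, $\Cn_{\mu_{BM}}=\Cn_{\check\mu_{\SBlup}}\oplus\C$, and the class of a pair $(F_+,F_-)$ of boundary bundles with the zero operator is fully elliptic with index $\dim F_+-\dim F_-$. So the naive splitting carries a nonzero index and contributes in degree $*$, not $*+1$; reconciling this with the statement requires choosing the complement through the boundary symbol algebra rather than through the quotient map to $C_0(V)$. That reconciliation — correct degree, natural splitting, vanishing index — is the content of the theorem and is absent from both your step 4/5 and the bare assertion ``we then deduce''.
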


\section{Appendix}

\subsection{A characterization of groupoids via elements composable to a unit}

\begin{remark}
Let $G$ be a groupoid. For $n\in \N$, we may define the subsets $U^{n}(G)=\{(x_1,\ldots,x_n)\in G^n; s(x_i)=r(x_{i+1});\ x_1\cdot x_2\ldots x_n\in G^{(0)}$. We have $U^1(G)=G^{(0)}$, the sets $U^k(G)$ are invariant under cyclic permutations; moreover, we have  natural maps $\delta_k:U^{n}(G)\to U^{n+1}(G)$ ($1\le k\le n$) defined by $\delta_k(x_1,\ldots,x_n)=(x_1,\ldots,x_k,s(x_k),x_{k+1},\ldots ,x_n)$ and boundaries $b_k:U^{n+1}(G)\to U^{n}(G)$ defined by $b_k(x_1,\ldots ,x_{n+1})=(x_1,\ldots,x_{k-1},x_kx_{k+1},x_{k+2},\ldots x_{n+1})$ if $k\ne n+1$ and $b_{n+1}(x_1,\ldots ,x_{n+1})=(x_{n+1}x_1,x_2,\ldots ,x_n)$.
\end{remark}

Let $G$ be a set. For $(x,y,z)\in G^3$, put $q_1(x,y,z)=x$ and $q_{1,2}(x,y,z)=(x,y)$.

\begin{proposition}\label{VBgpdprop1}
Let $G$ be a set and $U^{1}(G)\subset G$, $U^3(G) \subset G^3$ be subsets satisfying the following conditions.\begin{enumerate}
\item The subset $U^{3}(G)$ of $G^3$ is invariant under cyclic permutation.\label{condition1}
\item For all $x\in U^{1}(G)$, $(x,x,x)\in U^{3}(G)$.\label{condition2}
\item The map $q_{12}:(x,y,z)\mapsto (x,y)$ from $U^{3}(G)$ to $G^2$ is injective.\label{condition3}
\item  Let $R=\{(x,y,z)\in U^3(G);\ z\in U^{1}(G)\}$. The map $q_1:R\to G$ is a bijection and $U^2(G)=q_{12}(R)$ is invariant under (cyclic) permutation.\label{condition4}
\item  The subset $U^4(G)=\{(x,y,z,t)\in G^4;\ \exists (u,v)\in U^2(G); (x,y,v)\in U^{3}(G)\ \hbox{and}\ (u,z,t)\in U^{3}(G)\}$ is invariant under cyclic permutation in $G^4$.\label{condition5}
\end{enumerate}
 Then there is a unique groupoid structure on $G$ such that $U^{1}(G)$ is its set of units, and $U^{3}(G)=\{(x,y,z)\in G^3;\ (x,y)\in G^{(2)},\ z=(xy)^{-1}\}$.
\end{proposition}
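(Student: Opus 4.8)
The plan is to reconstruct all the groupoid structure maps from the data $U^1(G)$ and $U^3(G)$, verify they satisfy the groupoid axioms, and check uniqueness. First I would define the candidate structure maps. The unit space is $G^{(0)}=U^1(G)$. For the inverse: by condition \ref{condition4}, $q_1:R\to G$ is a bijection, so for each $x\in G$ there is a unique triple $(x,y,z)\in U^3(G)$ with $z\in U^1(G)$; I set $x^{-1}:=y$ and (anticipating) $s(x):=z$. For the range, using cyclic invariance (condition \ref{condition1}) the triple $(z,x,y)$ lies in $U^3(G)$ with $z\in U^1(G)$, so by the same bijection applied to the variable in first position one extracts $r(x)$; more directly, $r(x):=s(x^{-1})$ once $s$ and the inverse are defined, and one checks consistency via the cyclic symmetry. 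Composability: $(x,y)\in G^{(2)}$ should mean $(x,y)\in U^2(G)=q_{12}(R)$, and then the product $xy$ is the unique $w$ with $(x,y,w^{-1})\in U^3(G)$, equivalently by condition \ref{condition3} the unique third coordinate completing $(x,y)$ inside $U^3(G)$, composed with the inverse map — this is well-defined precisely because $q_{12}$ restricted to $U^3(G)$ is injective.

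Next I would verify the axioms in order. The unit axioms $x\cdot s(x)=x$ and $r(x)\cdot x=x$ follow from condition \ref{condition2} together with the definitions: $(x,x^{-1},s(x))\in U^3(G)$ and cyclic permutation. The relation $(x^{-1})^{-1}=x$ and $x\cdot x^{-1}=r(x)$, $x^{-1}\cdot x=s(x)$ come from cyclic invariance of $U^3(G)$ applied to the defining triple of $x$. The only substantial axiom is associativity, and this is exactly what condition \ref{condition5} is designed to give: the set $U^4(G)$ encodes ``$x,y,z,t$ with $xyz t$ a unit'' via an intermediate $(u,v)\in U^2(G)$ playing the role of $(v,\ldots)=$ (partial product, its inverse); cyclic invariance of $U^4(G)$ then translates into the equality $(xy)z = x(yz)$ when both sides are defined, after unwinding the bijections of conditions \ref{condition3} and \ref{condition4}. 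One also needs that the set of composable pairs behaves correctly, which is the invariance of $U^2(G)$ under permutation stated in condition \ref{condition4}.

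For uniqueness, I would argue that any groupoid structure on $G$ with unit space $U^1(G)$ and with $U^3(G)=\{(x,y,z):(x,y)\in G^{(2)},\ z=(xy)^{-1}\}$ must have its source, range, inverse and product determined as above: the source of $x$ is forced to be the unique unit $z$ with $(x,x^{-1},z)\in U^3(G)$ (take $y=x^{-1}$, so $xy=r(x)$ is a unit and $z=r(x)^{-1}=r(x)$ — here one is careful about which cyclic rotation isolates $s$ versus $r$), the inverse is then forced by $z\in U^1(G)$, and the product is forced by injectivity of $q_{12}$. So the structure is unique if it exists.

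\textbf{Main obstacle.} The routine bookkeeping is harmless, but the genuinely delicate point is associativity: one must check that condition \ref{condition5} really is equivalent to associativity and not to something weaker or stronger, i.e. that the auxiliary pair $(u,v)\in U^2(G)$ in the definition of $U^4(G)$ can always be chosen and is essentially unique, so that cyclic invariance of $U^4(G)$ indeed yields $(xy)z=x(yz)$ on the nose (including matching up the two sides' domains of definition). I expect the bulk of the work — and the place where the precise form of the hypotheses matters — to be in carefully translating the cyclic symmetry of $U^3(G)$ and $U^4(G)$ into the one-sided statements about $r$, $s$, inverses and the associativity identity, keeping track of which rotation produces which identity.
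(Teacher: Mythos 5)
Your overall strategy coincides with the paper's: recover the involution and the units from condition d), the product from injectivity of $q_{12}$ on $U^{3}(G)$, and associativity from the cyclic invariance of $U^4(G)$. But there is a concrete error in the middle of the plan. You declare that $(x,y)\in G^{(2)}$ ``should mean $(x,y)\in U^2(G)=q_{12}(R)$''. That cannot be right: $U^2(G)=q_{12}(R)$ consists of the pairs $(x,y)$ for which some \emph{unit} $z$ completes them to a triple of $U^3(G)$, i.e.\ exactly the pairs $(x,x^{-1})$ — it is the graph of the inversion, and its permutation-invariance is what makes inversion an involution, not a statement about composability. With your definition the product would only ever be defined on pairs of the form $(x,x^{-1})$. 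The correct set of composable pairs is $G^{(2)}=q_{12}(U^{3}(G))$, and one must then \emph{prove} that $q_{12}(U^{3}(G))=\{(x,y):\ s(x)=r(y)\}$; this step, which uses condition e) in both directions (padding a composable pair with $s(x)$, and conversely splicing $(x^{-1},x,s(x))$ with $(s(x),y,y^{-1})$ inside $U^4(G)$), is entirely absent from your plan. A related slip: the unique unit $z$ with $(x,x^{-1},z)\in U^{3}(G)$ is $r(x)$, not $s(x)$, since $z=(xx^{-1})^{-1}=r(x)$; you even compute $z=r(x)$ in your uniqueness paragraph yet persist in calling it the source. The source must then be defined as $s(x)=r(x^{-1})$.

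On associativity your sketch points at the right mechanism but leaves the one nontrivial verification unexamined: given $(x,y,z,w)\in U^4(G)$ witnessed by some $(u,v)\in U^2(G)$, after the cyclic rotation to $(y,z,w,x)\in U^4(G)$ you get a possibly different witness $(u',v')\in U^2(G)$ with $(y,z,v')\in U^3(G)$ and $(u',w,x)\in U^3(G)$; one must use injectivity of $q_{12}$ to identify $v'=(yz)^{-1}$, hence $u'=yz$ because $U^2(G)$ is the graph of an involution, and only then does $(yz,w,x)\in U^{3}(G)$ together with cyclic invariance yield $x(yz)=w^{-1}=(xy)z$. You correctly flag the choice of $(u,v)$ as the delicate point, but the resolution rests precisely on the two facts your plan garbles — that $U^2(G)$ is the inversion graph rather than the composability relation, and that $q_{12}$ is injective on $U^3(G)$ — so the proposal as written does not go through without repairing the identification of $G^{(2)}$.
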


\begin{proof}
Uniqueness is easy:  one defines the range and the inverse of $x$ by saying that $(x,x^{-1},r(x))$ is the unique element $w$ in $R$ such that $q_1(w)=x$; the source is defined by $s(x)=r(x^{-1})$; the product for composable elements $(x,y)$ is then defined by the fact $(x,y,(xy)^{-1})\in U^{3}(G)$.

\medskip Let us pass to existence.

\begin{itemize}
\item By condition (\ref{condition4}), $U^2(G)$ is the graph of an involution that we denote by $x\mapsto x^{-1}$. By condition (\ref{condition2}), if $x\in U^1(G)$, then $x^{-1}=x$.

\item Define also $r:G\to U^{1}(G)$ to be the (unique) element in $U^{1}(G)$ such that $(x,x^{-1},r(x))\in U^{3}(G)$ and put $s(x)=r(x^{-1})$.

\item Put $G^{(2)}=q_{12}(U^{3}(G))$. If $(x,y)\in G^{(2)}$, then there exists $z$ such that $(x,y,z)\in U^{3}(G)$. As $(x,s(x),x^{-1})\in U^{3}(G)$, it follows that $(x,s(x),y,z)\in U^4(G)$ and thus $(z,x,s(x),y)\in U^4(G)$, and therefore $(s(x),y)\in G^{(2)}$, and $r(y)=s(x)$.

Conversely, if $(x,y)\in G^2$ satisfy $s(x)=r(y)$, as $(x^{-1},x,s(x))\in U^{3}(G)$ and $(s(x),y,y^{-1})\in U^{3}(G)$, it follows that $(x^{-1},x,y,y^{-1})\in U^4(G)$, whence $(x,y,y^{-1},x^{-1})\in U^4(G)$ and $(x,y)\in G^{(2)}$.

In other words, $G^{(2)}=\{(x,y)\in G^2;\ s(x)=r(y)\}$.

\item For $(x,y)\in G^{(2)}$, we may define thanks to condition (\ref{condition3}) the element $xy\in G$, by the requirement $(x,y,(xy)^{-1})\in U^{3}(G)$. 

\item Since $(y,(xy)^{-1})\in G^{(2)}$ and $((xy)^{-1},x)\in G^{(2)}$, it follows that $s(xy)=r((xy)^{-1})=s(y)$ and $r(xy)=s((xy)^{-1})=r(x)$.

\item For $x\in G$, since $(r(x),x,x^{-1})\in U^{3}(G)$ and $(x,s(x),x^{-1})\in U^{3}(G)$, it follows that $r(x)x=x$ and $xs(x)=x$ - thus units are units. As $(x,x^{-1},r(x))\in U^{3}(G)$ and $(x^{-1},x,s(x))\in U^{3}(G)$ we find $xx^{-1}=r(x)$ and $x^{-1}x=s(x)$ and thus $x^{-1}$ is the inverse of $x$.

\item Finally, let $(x,y,z)\in G^3$ be such that $(x,y)\in G^{(2)}$ and $(y,z)\in G^{(2)}$. We saw that $s(xy)=s(y)=r(z)$. Put $w=((xy)z)^{-1}$. Then $(x,y,(xy)^{-1})\in U^{3}(G)$ and $(xy,z,w)\in U^{3}(G)$, and thus $(x,y,z,w)\in U^4(G)$, whence $(y,z,w,x)\in U^4(G)$ and therefore $(yz,w,x)\in U^{3}(G)$ and finally, $(x,yz,w)\in U^{3}(G)$, which means that $x(yz)=w^{-1}=(xy)z$.
\qedhere
\end{itemize}
\end{proof}

\subsection{\VBGs\ and their duals (\cite{Pra,Mack})}
A \VBG\ over a groupoid $G$ is a vector bundle $E$ over $G$ with a groupoid structure such that $E^{(0)}\subset E$ is a vector subbundle of the restriction of $E$ to $G^{(0)}$ and such that all the structure maps of the groupoid $E$ ($r_E,s_E$, $x\mapsto x^{-1}$ and the composition) are linear bundle maps and $r_E:E\to r_G^*(E^{(0)})$ is surjective.

\begin{proposition}
Let $E\to G$ be a \VBG. For all $k\in \N$ ($k\ge 1$) $U^k(E)\to U^{k}(G)$ is a subbundle of the restriction to $U^{k}(G)\subset G^k$ of the bundle $E^k\to G^k$. We identify the dual bundle of $E^k\to G^k$ with $(E^*)^k$. Then the dual bundle $E^*$ is a VB-groupoid over $G$ with $U^k(E^*)=U^k(E)^\perp$ for all $k$.
\begin{proof}
We prove that $U^1(E^*)=(E^{(0)})^\perp$ and $U^3(E^*)=U^3(E)^\perp$ satisfy the conditions of prop. \ref{VBgpdprop1}.  

\begin{enumerate}\renewcommand\labelenumi{\rm ({\theenumi})}
\item  Condition (\ref{condition1}) is obvious: since $U^3(E)$ is invariant under cyclic permutations, so is $U^3(E)^\perp$.

\item  Taking the restriction of $E$ over a point of $G^{(0)}$, we have a linear groupoid, and we have already proved that its orthogonal, is a linear groupoid. Condition (\ref{condition2}) follows immediately.

\item By condition (\ref{condition4}) for $E$, it follows that $q_1:U^3(E)\to E$ is onto, whence (by condition \ref{condition1}) $q_3:U^3(E)\to E$ is onto. Therefore $q_{1,2}:U^3(E)^\perp\to E^*\times E^*$ is injective.

\item  Since $q_{2,3}:U^3(E)\to E\times E$ injective, it follows that  $q_1:U^3(E)^\perp\to E^*$ is onto. Since $U^2(E)$ is the graph of an involution, the same holds for $U^2(E)^\perp$. Note also that condition (\ref{condition4}) ensures that $q_1:U^2(E)^\perp\to E^*$ is an isomorphism. We then just have to show that $\{(x,y,z)\in U^3(E)^\perp;\ z\in (U^1(E))^\perp\}=\{(x,y,z)\in U^3(E)^\perp; \ (x,y)\in U^2(E)^\perp\}$. The first term is the orthogonal of $U^3(E)+F_1$ where $F_1=\{(0,0,z);\ z\in U^1(E)\}$ and the second the orthogonal of $U^3(E)+F_2$ where $F_2=\{(x,y,0);\ (x,y)\in U^2(E)\}$.  Now, for every $(x,y)\in U^2(E)$ there exists $z\in U^1(E)$, namely $z=r_E(x)=s_E(y)$ such that $(x,y,z)\in U^3(E)$; by surjectivity of $r_E$, it follows that for every $\gamma\in G$ and every $z\in E^{(0)}_\gamma$ there exists $x\in E_\gamma$ such that $r_E(x)=z$, thus $(x,x^{-1},z)\in U^3(E)$. In other words, $U^3(E)+F_1=U^3(E)+F_2$. Condition (\ref{condition4}) follows.

\item  We just need to show that $U^4(E)^\perp=\{(w,x,y,z)\in (E^*)^4;\ \exists (u,u')\in U^2(E)^\perp, \ (w,x,u)\in U^3(E)^\perp\ \hbox{and} \ (u',y,z)\in U^3(E)^\perp\}$. As $U^4(E)^\perp$ is cyclicly invariant, condition (\ref{condition5}) will follow.

If there exists $(u,u')\in U^2(E)^\perp$ such that $(w,x,u)\in U^3(E)^\perp$ and $(u',y,z)\in U^3(E)^\perp$, then, for every $(a,b,c,d)\in U^4(E)$, there exists $(v,v')\in U^2(E)$ such that $(a,b,v)\in U^3(E)$ and $(v',c,d)\in U^3(E)$. It follows that $(w,x,y,z)\in U^4(E)^\perp$.
The inclusion $\{(w,x,y,z)\in (E^*)^4;\ \exists (u,u')\in U^2(E)^\perp, \ (w,x,u)\in U^3(E)^\perp\ \hbox{and} \ (u',y,z)\in U^3(E)^\perp\}\subset U^4(E)^\perp$ follows.

Now, as vector bundles, if $\dim E=n$ and $\dim E^{(0)}=p$, it follows that $\dim (U^k(E))=(k-1)n-(k-2)p$ (for all $k\ge 1$); therefore $\dim U^3(E)^\perp=n+p$ and $\dim U^4(E)^\perp=n+2p$. As the projection $q_1:U^3(E)^\perp\to E^*$ is onto, we find that for $(\gamma_1,\gamma_2,\gamma_3,\gamma_4)\in U^4(G)$, we have $\dim \{(w,x,u),(v,y,z)\in U^3(E)^\perp_{(\gamma_1,\gamma_2,\gamma_3\gamma_4)}\times U^3(E)^\perp_{(\gamma_1\gamma_2,\gamma_3,\gamma_4)};\ v=u^{-1}\}$ is $(n+p)+(n+p)-n$ and we find the desired equality by dimension equality.  
\end{enumerate}

It is then quite immediately seen, using induction and dimension equality, that, for every $k$, we have $U^k(E^*)=U^k(E)^\perp$.
\end{proof}
\end{proposition}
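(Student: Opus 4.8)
The plan is to verify that $U^1(E^*) = (E^{(0)})^\perp$ and $U^3(E^*) = U^3(E)^\perp$ satisfy the five conditions of Proposition \ref{VBgpdprop1}, and then to deduce by induction that $U^k(E^*) = U^k(E)^\perp$ for all $k$. The first two conditions are essentially formal: cyclic invariance of $U^3(E)^\perp$ is immediate from that of $U^3(E)$, and condition (\ref{condition2}) is inherited fiberwise from the already-established fact (Remark \ref{UnicLin}.c) that the orthogonal of a linear groupoid structure on a vector space is again a linear groupoid structure (so the triple $(\xi,\xi,\xi)$ lies in $U^3(E^*)$ when $\xi \in (E^{(0)})^\perp$). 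The genuine content is in conditions (\ref{condition3}), (\ref{condition4}), (\ref{condition5}), where one must pass back and forth between surjectivity of a projection $q_i$ on $U^k(E)$ and injectivity of the dual projection on $U^k(E)^\perp$, and track dimensions carefully.

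First I would record the dimension bookkeeping, since it drives everything: if $\dim E = n$ (over $G$) and $\dim E^{(0)} = p$ (over $G^{(0)}$), then using that $U^k(E) \to U^k(G)$ is a subbundle cut out by the $k$ composability-and-product conditions one computes $\dim U^k(E) = (k-1)n - (k-2)p$ fiberwise over $U^k(G)$ (this is the linear-algebra computation one does pointwise, using surjectivity of $r_E$ and $s_E$). Hence $\dim U^3(E)^\perp = 3n - (3n - p) = \dots$ — more precisely $\dim U^3(E)^\perp = (\dim E^3) - \dim U^3(E) = 3n - (2n-p) = n+p$, and similarly $\dim U^4(E)^\perp = 4n - (3n-2p) = n+2p$. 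For condition (\ref{condition3}): since $q_1: U^3(E) \to E$ is onto (this is part of condition (\ref{condition4}) for $E$ itself), cyclic invariance gives $q_3: U^3(E)\to E$ onto, hence $q_{3}^\vee$ is injective on the orthogonal, i.e. $q_{12}: U^3(E)^\perp \to E^*\times E^*$ is injective — here I use the standard duality that the annihilator of $\ker(q_i)$ is the image of $q_i^*$, and injectivity of $q_{12}$ on $U^3(E)^\perp$ is equivalent to surjectivity of $q_3$ on $U^3(E)$. For condition (\ref{condition4}), the subtle point I would spell out is the identification $\{(x,y,z)\in U^3(E)^\perp: z\in (E^{(0)})^\perp\} = \{(x,y,z)\in U^3(E)^\perp: (x,y)\in U^2(E)^\perp\}$: the left side is the annihilator of $U^3(E) + F_1$ with $F_1 = \{(0,0,z): z\in U^1(E)\}$, the right side the annihilator of $U^3(E) + F_2$ with $F_2 = \{(x,y,0): (x,y)\in U^2(E)\}$, and one shows $U^3(E)+F_1 = U^3(E)+F_2$ using that for any $(x,y)\in U^2(E)$ there is $z = r_E(x) = s_E(y) \in U^1(E)$ with $(x,y,z)\in U^3(E)$, together with surjectivity of $r_E$ to get enough such $(x,y)$ over a fixed arrow.

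The main obstacle — and the place where I expect to spend real effort — is condition (\ref{condition5}): showing $U^4(E)^\perp$ coincides with the set of $(w,x,y,z)$ for which there exists $(u,u')\in U^2(E)^\perp$ with $(w,x,u)\in U^3(E)^\perp$ and $(u',y,z)\in U^3(E)^\perp$. One inclusion is soft (pairing against an arbitrary element of $U^4(E)$, which by definition factors through some $(v,v')\in U^2(E)$, and using the factorization hypotheses on the dual side). The reverse inclusion is the hard part and must be done by a dimension count over each fiber $(\gamma_1,\gamma_2,\gamma_3,\gamma_4)\in U^4(G)$: one computes that $\{((w,x,u),(v,y,z)) \in U^3(E)^\perp_{(\gamma_1,\gamma_2,\gamma_3\gamma_4)} \times U^3(E)^\perp_{(\gamma_1\gamma_2,\gamma_3,\gamma_4)} : v = u^{-1}\}$ has dimension $(n+p)+(n+p) - n$ (the last $-n$ coming from the transversality of the "$v = u^{-1}$" condition, which relies on $q_1$ being onto on $U^3(E)^\perp$), and that this matches $\dim U^4(E)^\perp = n+2p$, forcing the inclusion to be an equality. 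Once conditions (1)--(5) are checked, Proposition \ref{VBgpdprop1} produces the groupoid structure on $E^*$; finally, an induction on $k$, using the boundary and degeneracy maps $b_k, \delta_k$ together with the dimension formula $\dim U^k(E) = (k-1)n-(k-2)p$ and repeated dimension equality, yields $U^k(E^*) = U^k(E)^\perp$ for all $k$, and linearity of all the resulting structure maps is automatic since they are read off from the linear subbundles $U^k(E)^\perp$. That $E^*\to G$ is a VB-groupoid (in particular that $r_{E^*}$ is surjective onto $r_G^*((E^*)^{(0)})$) follows from the corresponding surjectivity for $E$ by the same annihilator duality.
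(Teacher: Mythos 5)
Your proposal follows essentially the same route as the paper's proof: verifying the five conditions of Proposition \ref{VBgpdprop1} for $(E^{(0)})^\perp$ and $U^3(E)^\perp$, with the same duality arguments (surjectivity of $q_i$ on $U^3(E)$ versus injectivity on the annihilator), the same identification $U^3(E)+F_1=U^3(E)+F_2$ for condition (\ref{condition4}), the same dimension count $\dim U^k(E)=(k-1)n-(k-2)p$ for condition (\ref{condition5}), and the same concluding induction. The argument is correct and matches the paper's.
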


\subsection{Fourier transform}

\begin{remark}\label{rem:Fourier}
Let $F_1,F_2$ be real vector spaces and let $H$ be a subspace of $F_1\times F_2$. Assume that $p_1:H\to F_1$ is injective and $p_2:H\to F_2$ is surjective. For $f\in \scS(F_1)$ we put $q_H(f)=(p_2)_!(p_1^*)(f)$.  Then, for every $f\in \scS(F_1)$, $\widehat{q_H(f)}=q_{H^\perp}(\hat f)$ (taking a good normalization for the Fourier transform).

Indeed we can write $F_1=F_2\times L\times K$, $H=\{((x,y,0),x),\ x\in F_2,\ y\in L\}$. Then $H^\perp =\{((\xi,0,\eta),-\xi),\ \xi\in F_2^*,\ \eta\in K^*\}$. 

For $x\in F_2$, we have $q_H(f)(x)=\int _{L}f(x,y,0)\,dy$.

For $\xi \in F_2^*$, we have $$\widehat{q_H(f)}(\xi)=\int _{F_1\times L}f(x,y,0)e^{-i\langle x|\xi\rangle}\,dx\,dy$$ and $$q_{H^\perp}(\hat f)(\xi)=\int_{K^*}\hat f(\xi,0,\eta)\,d\eta=\int_{K^*}\Big(\int_{F_2\times L\times K} f(x,y,z)e^{-i(\langle x|\xi\rangle+\langle z|\eta\rangle)}dx\,dy\,dz\Big)d\eta.$$
But $\int_{K^*}\Big(\int_{K} f(x,y,z)e^{-i \langle z|\eta\rangle}dz\Big)d\eta=f(x,y,0)$.

\end{remark}

\begin{remark}
Let $E\to G$ be a \VBG. For $\gamma\in G$, $E_x$ is a linking space between the groupoids $E_{s(x)}$ and $E_{r(x)}$. The family of Hilbert bimodules $C^*(E_x)_{x\in G}$ is a \emph{Fell bundle} and $C^*(E)$ is the $C^*$-algebra associated with this Fell bundle (\cite{Kumj, Muh, MuWi, IonWil}).
\end{remark}

\begin{proposition}
Let $E\to G$ be a \VBG, and let $E^*$ be the dual \VBG. Then $C^*(E)\simeq C^*(E^*)$ - via Fourier transform. 
\begin{proof}
For $(\gamma,\gamma')\in G^{(2)}$, let $F_1\subset E_{\gamma}\times E_{\gamma'}$ be the set of composable elements; let $F_2=E_{(\gamma\gamma')^{-1}}$ and $H\subset F_1\times F_2$ the set of $(x,y,z)\in E_{\gamma}\times E_{\gamma'}\times E_{(\gamma\gamma')^{-1}}$ that compose to a unit. Remark \ref{rem:Fourier} implies that for $f\in \scS(E_\gamma)$ and $g\in \scS(E_{\gamma})$, we have $\widehat {(f\cdot g)}=\hat f\cdot \hat g$ - where $f\cdot g\in E_{\gamma\gamma'}$ is the ``Fell bundle product''. In other words, the Fourier transform map is  an isomorphism of the Fell-bundles and therefore the corresponding $C^*$-algebras are isomorphic.

\end{proof}
\end{proposition}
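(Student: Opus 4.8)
The plan is to realise the isomorphism as a fibrewise Fourier transform along the vector bundle $E\to G$ and to check that it respects the Fell bundle structures underlying $C^*(E)$ and $C^*(E^*)$. Recall from the remark above that $C^*(E)$ is the $C^*$-algebra of the Fell bundle $\{C^*(E_x)\}_{x\in G}$ built from the linking spaces $E_x$, and likewise $C^*(E^*)$ is the $C^*$-algebra of $\{C^*(E^*_x)\}_{x\in G}$; since the full (and the reduced) $C^*$-algebra of a Fell bundle depends functorially on the bundle, it is enough to produce an isomorphism of Fell bundles over $G$.

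First I would set up the fibrewise Fourier transform. Concretely $\hat f(\xi)=\int_{E_{\pi(\xi)}}f(\eta)\,e^{-i\langle\xi,\eta\rangle}$ for $\xi\in E^*$, where $\pi:E^*\to G$ is the projection; more invariantly, for each $\gamma\in G$ this is the canonical unitary from the $L^2$-space of half-densities on the vector space $E_\gamma$ onto the $L^2$-space of half-densities on its dual $E^*_\gamma$, which requires no choice of measure. These assemble into a bijective map $C_c(E)\to C_c(E^*)$, $f\mapsto\hat f$; working with half-densities throughout is what makes the construction canonical and continuous over all of $G$.

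The heart of the matter is that this map intertwines the two convolution products. Fix $(\gamma,\gamma')\in G^{(2)}$, let $F_1\subset E_\gamma\times E_{\gamma'}$ be the linear subspace of composable pairs, $F_2=E_{(\gamma\gamma')^{-1}}$, and $H=\{(x,y,z)\in F_1\times F_2;\ z=(xy)^{-1}\}$, which is the relevant slice of $U^3(E)$. The projection $p_1:H\to F_1$ is injective (uniqueness of inverses, \cf Prop.\ \ref{VBgpdprop1}.\ref{condition3}) and $p_2:H\to F_2$ is surjective (surjectivity of $r_E$: given $z$ pick $x\in E_\gamma$ with $r_E(x)=r_E(z^{-1})$ and set $y=x^{-1}z^{-1}$), so Remark \ref{rem:Fourier} yields $\widehat{q_H(f)}=q_{H^\perp}(\hat f)$. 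Now $q_H$ is, up to the canonical vector space isomorphism $E_{(\gamma\gamma')^{-1}}\simeq E_{\gamma\gamma'}$ induced by inversion, precisely the Fell-bundle product $C_c(E_\gamma)\times C_c(E_{\gamma'})\to C_c(E_{\gamma\gamma'})$; and since $U^3(E^*)=U^3(E)^\perp$, the subspace $H^\perp$ is the corresponding slice of $U^3(E^*)$ and $q_{H^\perp}$ is the Fell-bundle product of $E^*$. Hence $\widehat{f\cdot g}=\hat f\cdot\hat g$. A routine computation — Fourier transform carries the map $g\mapsto\overline{g\circ(\text{inversion of }E)}$ to $g\mapsto\overline{g\circ(\text{inversion of }E^*)}$, the determinant factors being absorbed by the half-density normalisation — shows that the map also respects the involutions; on a unit fibre $x=u\in G^{(0)}$ it reduces to the classical Fourier duality between the linear groupoid $E_u$ and its dual $E^*_u$ of Remark \ref{UnicLin}.

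Putting this together, the fibrewise Fourier transform is an isomorphism of Fell bundles over $G$, hence induces isomorphisms of both the full and the reduced groupoid $C^*$-algebras, so $C^*(E)\simeq C^*(E^*)$. The one genuinely delicate point I expect is the bookkeeping of normalisations: one must check that fibrewise Fourier inversion, the Plancherel constant, and the half-density twists that convert the groupoid convolution on $E$ into the operation $q_H$ of Remark \ref{rem:Fourier} all vary continuously with $(\gamma,\gamma')$ and cancel exactly. Systematic use of half-densities, so that no measure is ever chosen, makes this essentially automatic and reduces the whole statement to the linear-algebraic identity $U^3(E^*)=U^3(E)^\perp$ established above.
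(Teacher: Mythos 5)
Your proposal is correct and follows essentially the same route as the paper: the same slice $H\subset F_1\times F_2$ of $U^3(E)$, the same appeal to Remark \ref{rem:Fourier} via the identity $U^3(E^*)=U^3(E)^\perp$, and the same conclusion that the fibrewise Fourier transform is an isomorphism of Fell bundles. The extra details you supply (verifying that $p_1$ is injective and $p_2$ surjective, the half-density normalisation, and the compatibility with involutions) are all correct and merely make explicit what the paper leaves implicit.
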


\renewcommand{\nomname}{List of Symbols}

\printnomenclature[3cm]

\bibliography{BiblioMB.bib} 
\bibliographystyle{amsplain}

\end{document}